\pgfplotsset{compat=newest}
\renewcommand{\epsilon}{\varepsilon}
\renewcommand{\phi}{\varphi}
\renewcommand{\theta}{\vartheta}
\newcommand{\p}{k}
\newcommand{\face}{\textsl{f }}
\newcommand{\el}{K}
\newcommand{\Omh}{\Omega_h}
\newcommand{\n}{{\nu}}
\newcommand{\K}{K}
\newcommand{\Poly}[1]{\mathbb{P}_{#1}}
\newcommand{\Th}{\mathcal{T}_h}
\newcommand{\Pinabla}{\Pi^{\nabla}}
\newcommand{\Pinablaf}{\Pi^{\nabla,k}_\face}
\newcommand{\Bone}{{\mathbb{B}_\p}}
\newcommand{\Vfp}{V^{\face}_\p}
\newcommand{\VKp}{V^{K}_\p}
\newcommand{\tVfp}{\widetilde{V}_k^\face}
\newcommand{\PolyDom}{{\Omega}}
\newcommand{\pwPoly}{\Poly{k}(\Tess) }
\newcommand{\Svem}{S_a^\el}
\newcommand{\hel}{h_\el}
\newcommand{\hKf}{\widetilde h_\face} 
\newcommand{\lh}{\lambda_h}
\newcommand{\uh}{u_h}
\newcommand{\vh}{v_h}
\newcommand{\Vh}{V_h}
\newcommand{\norm}[1]{\|#1\|}
\newcommand{\snorm}[1]{| #1 |}
\newcommand{\Fb}{{\mathcal{F}^\partial }}
\newcommand{\dn}{\partial_\nu}
\newcommand{\Oh}{\Omega_h}
\newcommand{\Pn}{\Pi^\nabla\!}
\newcommand{\ahBH}{\mathfrak{a}_h^{{\textsc{BH}}}}
\newcommand{\Vhglob}{{\mathbb{V}_h}}
\newcommand{\FK}{\mathcal{F}_K}
\newcommand{\BDT}{\mathscr{C}}
\newcommand{\Gh}{\Gamma_h}
\newcommand{\uI}{u_I}
\newcommand{\lI}{\lambda_I}
\newcommand{\Lh}{\Lambda_h}
\newcommand{\mh}{\mu_h}
\newcommand{\wCinv}{\widetilde C_{\mathrm{inv}}}
\newcommand{\CBDT}{C_{\natural}\,}
\newcommand{\monomials}[1]{\mathcal{M}_{#1}}
\newcommand{\PinablaK}{\Pi^ {\nabla\!,k}_K}
\newcommand{\Prhs}{\widehat\Pi^0}
\newcommand{\Tess}{\Th}
\newcommand{\nK}{{\n_K}}
\newcommand{\cstar}{c_*}
\newcommand{\Cstar}{C^*}
\newtheorem{theorem}{Theorem}[section]
\newtheorem{corollary}[theorem]{Corollary}
\newtheorem{lemma}[theorem]{Lemma}
\theoremstyle{definition}
\newtheorem{assumption}[theorem]{Assumption}
\theoremstyle{remark}
\newtheorem{remark}[theorem]{Remark}
\numberwithin{equation}{section}
\newcommand{\roundPrecision}{2}
\newcommand{\multeqref}[2]{\crefrange{#1}{#2}}
\definecolor{darkpastelgreen}{rgb}{0.01, 0.75, 0.24}
\definecolor{cadmiumgreen}{rgb}{0.0, 0.42, 0.24}
\newcommand{\Pib}{\Pi^0_\partial}
\newcommand{\Cinv}{C_\mathrm{inv}}
\newtheorem{comment}[theorem]{Comment}
\long\def\MCOMMENT#1{{\color{red}\bf }} 
\title[Weakly imposed Dirichlet boundary conditions for 2D and 3D VEM]{Weakly imposed Dirichlet boundary conditions\\ for  2D and 3D Virtual Elements} 
\author[S. Bertoluzza]{Silvia Bertoluzza}
\address{ IMATI ``E. Magenes'', CNR, Pavia (Italy)}%
\email{silvia.bertoluzza@imati.cnr.it}%
\author[M. Pennacchio]{Micol Pennacchio}
\address{IMATI ``E. Magenes'', CNR, Pavia (Italy)}%
\email{micol.pennacchio@imati.cnr.it}%
\author[D. Prada]{Daniele Prada}
\address{IMATI ``E. Magenes'', CNR, Pavia (Italy)}%
\email{daniele.prada@imati.cnr.it}%
\date{\today}
\thanks{{This paper has been realized in the framework of ERC Project CHANGE, which has received funding from the European Research Council (ERC) under the European Union Horizon 2020 research and innovation programme (grant agreement No 694515), was co-funded by the MIUR Progetti di Ricerca di Rilevante Interesse Nazionale (PRIN) Bando 2017 (grant 201744KLJL), and it was partially supported by INdAM--GNCS}}
\subjclass{}%
\keywords{Virtual Element method, Barbosa--Hughes method, Nitsche's method, curved domain}%
\begin{document}

\begin{abstract}
In the framework of 
virtual element discretizazions, we address the problem of imposing non homogeneous Dirichlet boundary conditions in a weak form, both on polygonal/polyhedral domains and on two/three dimensional domains with curved boundaries. 
We consider a Nitsche's type method \cite{Nitsche,Stenberg_Nitsche_2009}, and the stabilized formulation of the Lagrange multiplier method  proposed by  Barbosa and Hughes in 
\cite{HughesBarbosa}. 
We prove that also for the virtual element method (VEM),  provided the stabilization parameter is suitably chosen (large enough for Nitsche's method and small enough for the Barbosa-Hughes Lagrange multiplier method), the resulting discrete problem is well posed, and yields convergence with optimal order on polygonal/polyhedral domains. 
On smooth two/three dimensional domains, we combine both methods with a projection approach similar to the one of \cite{BDT}.
We prove that, given a polygonal/polyhedral approximation $\Omega_h$ of the domain $\Omega$, an optimal convergence rate can be achieved by using a suitable correction depending on high order derivatives of the discrete solution along outward directions (not necessarily orthogonal) at the boundary facets of $\Omega_h$. 
Numerical experiments validate the theory.

%
\end{abstract}

\maketitle


\section{Introduction}
Initially introduced in \cite{basicVEM} and \cite{hitchVEM}, the virtual element method (VEM) is a recent  PDE discretization framework aimed at generalizing the finite element method (FEM) to  
 meshes consisting of very general polytopes \cite{basicVEM}. 
The method  looks for the solution in a conforming discretization space using a non conforming Galerkin approach based on an approximate bilinear form, split as the sum of two components:
the first one is strongly consistent on polynomials, and guarantees the accuracy; the second one is a non consistent stabilization term, vanishing for polynomials and ensuring the well posedness of the discrete problem. 

\noindent The elements of the discretization space are not known in closed form, but are themselves solution of local PDEs, and everything is computed directly in terms of a  set of degrees of freedom, by resorting to suitable ``computable'' (in terms of the degrees of freedom) elemental projectors onto the space of polynomials (see \cite{hitchVEM}), thus avoiding  the explicit construction of the basis functions, whence the name {\em virtual}. 

Thanks to its flexibility and robustness with respect to mesh design, VEM enjoyed, in recent years, wide success: the theoretical analysis of the method has been extended in different directions \cite{
VEM_mixed,beirao_hp,beirao_hp_exponential,VEM-curved,curved_Trefftz,berrone_borio_17,de2016nonconforming,BMPP_VEM_nonconforming_stab,brenner2018virtual}.
Problems related to the efficiency of the method have been addressed \cite{FETI_VEM_2D, FETI_VEM_3D, antonietti_p_VEM,CALVO20191163}, 
and different model problems 
have been tackled, see e.g.  (\cite{beirao_parab,Antonietti_VEM_Stokes,Antonietti_VEM_Cahn,beirao_stokes,beirao_Navier_Stokes,perugia_Helmholtz,beirao_elastic,beirao_linear_elasticity,VEM_3D_elasticity,VEM_discrete_fracture,VEM_Laplace_Beltrami, ABPV:minsurf,wriggers2017efficient,wriggers2016virtual,brenner2021ac}).

\

In this paper, we specifically focus on the extension to the VEM of two widely used methods for imposing non homogeneous Dirichlet boundary conditions in a weak form: the Lagrange multiplier method, in its stabilized formulation as proposed by Barbosa and Hughes (\cite{HughesBarbosa, HughesBarbosa2}), and  Nitsche's method \cite{Nitsche}. While apparently more cumbersome, weak imposition of boundary conditions has some advantages over the more straightforward inclusion of the boundary value in the definition of the discretization space. Both the Lagrange multiplier method and, after suitable post-processing, Nitsche's method provide, as a byproduct, a stable approximation of the outer normal flux of the solution on the boundary, a quantity that might be relevant for the end user, and they can be used for weakly coupling different discretizations \cite{Hansbo2005nitsche}. Moreover, such methods turn out to be advantageous for several class of equations, such as advection diffusion problems or Navier-Stokes equation, allowing for higher accuracy on relatively coarse meshes \cite{BazilevHughes2007,BAZILEVS20074853}.

\

Originally proposed by Babuska  \cite{babuska} for FEM, the Lagrange multiplier method requires the difference between the trace of the discrete solution on the boundary and the Dirichlet boundary data to be orthogonal to a suitable multiplier space. 
In order for the method to converge optimally, such a space must satisfy,  with the approximation space for the solution, an ``inf-sup" type compatibility condition, which can be too restrictive already in the finite element framework, and the more so, in the virtual element method.
To relax such a requirement and gain more freedom in the choice of the multiplier space, we rather consider here the stabilized formulation proposed by  Barbosa and Hughes in 
\cite{HughesBarbosa}, that 
allows the approximation spaces for the two unknowns (solution and multiplier)
to be chosen arbitrarily, independently of each other.  In the VEM framework, on polygonal domains, we can then choose the multiplier to be a space of discontinuous piecewise polynomials. In the VEM spirit, we ensure the computability of the method  by replacing, in the Barbosa-Hughes formulation, the normal derivative of test and trial functions, with the normal derivative of the respective projections on the space of discontinuous polynomials, and we prove that, also in such framework, provided the stabilization parameter is small enough, the resulting discrete problem is well posed, and yields convergence with optimal order. As it happens in the finite element context \cite{Stenberg.1995}, also for the VEM, eliminating the multiplier in the Barbosa Hughes formulation results in Nitsche's method. The analysis of the latter can then be directly derived from the analysis of the former.

\

Both the Barbosa Hughes Lagrange multiplier formulation and Nitsche's method can also be used to handle problems on domains with smooth curved boundaries, suitably approximated by polygonal/polyhedral domains. Different approaches for the accurate treatment of curved domains in the finite element framework can be found in literature, see e.g. \cite{FEMsurvey_curved}.  We recall that  the plain approximation of a curved domain by straight facets introduces an error that, for higher order methods, can dominate the analysis. To compensate for such an error, we rely here on the strategy proposed by Bramble, Dupont and Thom\'ee for finite elements in the framework of Nitsche's method \cite{BDT}. Such a strategy has already been adapted to the Lagrange multiplier method, once again in the context of finite elements, in \cite{burman2020dirichlet}. The idea consists in imposing a corrected boundary condition which, by a suitable Taylor expansion, takes into account that the Dirichlet data is given not on the boundary of the approximating domain but, rather, on the boundary of the exact curved domain. It  has already been applied to the VEM in \cite{VEM_curvo}, in the context of Nitsche's method in two dimensions. Here, we combine such a strategy with the Barbosa Hughes Lagrange multiplier method, resulting in a method for which we prove stability and optimal error estimate, under suitable conditions on the approximating polyhedral domain (see \eqref{deftau}). Eliminating the multiplier results once again in a Nitsche's type method, which, in two dimension, coincides with the one proposed in \cite{VEM_curvo}.
It is worth pointing out that, as proposed in \cite{VEM_curved_beirao,VEM-curved,curved_Trefftz}, the  discretization of two dimensional problems with non homogeneous boundary conditions on domains with curved boundaries can also be carried out by resorting to virtual elements or Trefftz elements with curved edges, which can also be exploited to solve problems with curved interior interfaces. 

\

Unlike \cite{BDT, burman2020dirichlet, VEM_curvo}, which consider a Taylor expansion in the direction $\nu_h$ normal to the facets of the approximating domain, in order reduce the error deriving from approximating the domain, thus gaining more freedom in the choice of the approximation, here, in defining the corrected boundary condition, we consider an arbitrary direction $\sigma$ (see Figure \ref{fig:sigma}), which can be chosen in such a way that the distance to the exact boundary along $\sigma$  is as small as possible. In three dimensions, where the problem of constructing polyhedral domains approximating a curved domain can be quite challenging, this freedom in the choice of the direction for the Taylor expansion will turn out to be particularly advantageous,  as it allows to use a much larger class of approximating domains, such as, for instance, polycubic domains, which naturally arise in many applications (for instance those where the domain of definition of the problem is given as a result of an imaging process such as micro-CT \cite{van1995new}), thus potentially providing, in such situations, a valid alternative to finite element  methods.


%

\

The paper  is organized as follows. In section \ref{sec:VEM} we recall the virtual element method whereas in Section \ref{sec:BH-Nitsche} we derive the Barbosa-Hughes method in the virtual element framework and we provide a theoretical analysis of the resulting discretization, proving stability and an error estimate.
 In the same section we show that  eliminating the multiplier 
by static condensation,  the well known Nitsche's method for imposing Dirichlet boundary condition can be retrieved. 
 Then  in Section \ref{sec:curvo} we introduce and analyze the discretization for the problem on domains with curved boundaries, proving also in this case stability and optimal error estimate, as well as, once again, the equivalence with a Nitsche's type method. Finally, in Section \ref{sec:expes}, we test the method on several three dimensional test cases, with discretizations of different order. 



\newcommand{\Tf}{\mathcal{T}_\face}
\newcommand{\Tstar}{\mathcal{T}^*}
\newcommand{\hf}{h_\face}
\newcommand{\df}{\delta_\face}
\newcommand{\BB}{\mathcal{B}}
\newcommand{\Pif}{\Pi^{0,k'}_\face}

\section{The Virtual Element Discretization} \label{sec:VEM} 

\subsection{Notation}\label{sec:notation}
We use standard notation for Sobolev spaces, norms and semi norms. More precisely, given any $d$--dimensional domain $D $, $d=1,2,3$ we let the order $k$ Sobolev space $H^k(D)$ be  endowed with the standard seminorm and norm
\[
 | u |^2_{k,D} = \sum_{|\beta|=k} \int_D | D^\beta u 
|^2, \qquad \| u \|^2_{k,D} = \sum_{|\beta|\leq k} \int_D | D^\beta u 
|^2,
\]
with, for $\beta \in \mathbb{N}^d$, $| \beta | = \beta_1 + \cdots +\beta_d$,  $D^\beta = \partial^{\beta_1}_{x_1} \cdots \partial^{\beta_d}_{x_d}$.
We also denote by $| \cdot |_{\kappa,\infty,D}$ the standard semi norm for the Sobolev space $W^{k,\infty}(D)$
\[
| u |_{k,\infty,D} = \sum_{|\beta| = k} \mathrm{ess}\ \sup_x | D^\beta u(x) |.
\]
Moreover, we denote by $H^{1/2}(D)$ the fractionary Sobolev space of order $1/2$, endowed with the norm and seminorm
\[
| u |_{1/2,D}^2 = \int_D \int_D \frac{|u(x) - u(y)|^2}{| x-y|^{1+d}}\,dx \,dy , \qquad \| u \|^2_{1/2,D} = \| u \|^2_{0,D} +  | u |^2_{1/2,D}.
\]

\

We let $\mathbb{P}_k$ denote the space of polynomals of degrees less than or equal to $k$, and by $\mathbb{P}_k(D)$ its restriction to the domain $D$, with the convention that $\Poly{-1}= \{ 0\}$.
We let $\Pi^{\nabla,k}_D: H^1(D) \to \Poly{k}(D)$ denote the $H^1$ type projection defined by
\begin{equation}\label{defPinabla}
 \int_D\nabla \Pi^{\nabla,k}_D (v) \cdot \nabla w = \int_D\nabla v \cdot \nabla w, \quad \forall w \in \Poly{k}\pagebreak(D), \qquad  \int_{\partial D} \Pi^{\nabla,k}_D (v) = \int_{\partial D} v.
\end{equation}
We also let $\Pi^{0,k}_D: L^2(D) \to \Poly{k}(D)$ denote the orthogonal projection with respect to the $L^2(D)$ scalar product:
\[
\int_D \Pi^{0,k}_D(v) \, w = \int_D w\, v, \quad \forall v \in \Poly{k}(D).
\]

\subsection{The tessellation}

Let $\Omega \subset \mathbb{R}^d$, $d=2,3$, be a bounded polygonal/polyhedral domain. We let $\Gamma = \partial\Omega$ denote its boundary, and $\n$ denote the outer unit normal on $\Gamma$.
We consider in the following a family $\{ \Th \}_h$ of tessellations of $\Omega$ into a finite number of  polygonal/polyhedral elements $K$.  For the sake of simplicity, from now on we shall use a three-dimensional notation, and speak therefore of polyhedrons and faces. 
The change of terminology in the polygonal case is obvious and left to the reader. 

Given a tessellation $\Th$ in the family, we let 
$\Fb$ and $\FK$ denote, respectively, the set of faces of $\Th$ lying on, respectively, $\Gamma$ and  $\partial K$. Remark that $\Fb$ is the tessellation on $\Gamma$ obtained as the trace of $\Tess$.
For each element $K \in \Tess$ we let $\nK$ denote the outer unit normal to $K$. For a polyhedron $K$ we let $h_K$ denote its diameter, and we let $h = \max_{K\in \Th} h_K$. For a boundary face $\face\in \Fb$, we denote by $h_\face$ its diameter and by $\hKf$ the diameter of the element $K$ such that $\face \in \FK$:
\begin{equation}\label{defhtildef}
\hKf := h_K\quad \text{ for } \face \in \Fb\cap\FK.
\end{equation}

\

We make following assumptions on the family of tessellations $\{\Tess\}_h$ 
	\begin{assumption}\label{shape_regular}   There exist positive constants $\gamma_0$, $N_0$ such that
		the following properties hold for all elements of all the tessellations in $\{\Tess\}_h$ :
		\begin{enumerate}
			\item\label{sr1} $K$ is star shaped with respect to all points of a $d$ dimensional ball of radius $\rho_K \geq \gamma_0 h_K$;
		\item\label{sr3} The number $N_K$ of $(d-1)$ faces of $K$ satisfies $N_K \leq N_0$;
		\item\label{sr4} If $d = 3$, then all face $\face$ of $K$ is star shaped with respect to a disc of radius $\rho_\face \geq \gamma_0 h_\face$, and the number $N_\face$ of edges of $\face$ satisfies $N_\face \leq N_0$.
		\end{enumerate}
	\end{assumption}

%
%

\

Unless the specific value of the constant $C$ is explicitely  needed, in the following we will write $A \lesssim B$ (resp. $A \gtrsim B$) to indicate that the quantity $A$ is bounded from above (resp. from below) by a constant $C$ times the quantity $B$, with $C$ possibly depending on $\Omega$ as well as on $\gamma_0$ and $N_0$ but otherwise independent of the shape and size of the elements of the tessellations. The notation $A \simeq B$ will stand for $A \lesssim B \lesssim A$.

\

We let $\Poly{k}(\Tess)$ and $\Poly{k}(\Fb)$ denote the spaces of discontinuous piecewise polynomials of order up to $k$  defined, respectively, on the tessellation $\Tess$ and on its trace $\Fb$ on $\Gamma$:
\begin{gather*}
\Poly{k}(\Tess) = \{
v \in L^2(\Omega): \ v|_K \in \Poly{k}(K)\ \text{for all }K \in \Tess
\},\\[2mm]
\Poly{k}(\Fb) = \{
v \in L^2(\Gamma): \ v|_\face \in \Poly{k}(\face)\ \text{for all }\face \in \Fb
\}.
\end{gather*}

\

Under Assumption \ref{shape_regular}, several bounds hold with constants depending on the constants $\gamma_0$ and $N_0$, but otherwise independent of the shape and size of the polyhedrons (see \cite{Cangiani_book,brenner2018virtual}). 

\subsubsection*{Inverse inequalities for polynomials}  Let $K \in \Tess$.
For all $p \in \Poly{k}(K)$ and for all 
$m,j$ with  $0 \leq m \leq j$ it holds that
\begin{gather}\label{inversebase}
\| p \|_{j,\el} \lesssim h_\el^{m-j} \| p \|_{m,\el},\\
\label{tracePoly}
\norm{p}_{0,\partial\el} \leq \Cinv\,  \hel^{-1/2} \norm{p}_{0,\el}.
\end{gather}
	Applying \eqref{tracePoly} to $\nabla p$, $p \in \Poly{k}(K)$ we immediately obtain 
		\begin{equation}\label{inversenormal}
		\| \nabla p \|_{0,\partial K} \leq  \Cinv\, \hel^{-1/2}
		| p |_{1,\el}.	\end{equation}


\subsubsection*{Polynomial approximation} Let $K \in \Tess$ and $\face \in \FK$, and let 
 $w \in H^{s}(K)$ and $\phi \in H^{s}(\face)$, $0 \leq s \leq k+1$. Then, for $0 \leq t \leq s$ we have the following approximation bounds:
			\begin{equation}\label{approxj}
			\| u - \Pi^{0,k}_K (u )\|_{t,K} \lesssim h_K^{s-t} | u |_{s,K},
			\end{equation}	
				\begin{equation}\label{approxjface}
				\| \phi - \Pi^{0,k}_\face (\phi)\|_{t,\face} \lesssim h_\face^{s-t} | \phi |_{s,\face}.
				\end{equation}	
				Moreover, provided $s > 3/2$, we have that $\nabla u|_{\partial\Omega} \in (H^{s-3/2} (\partial K))^2 \subset (L^2(\partial K))^2$ so that $\nabla  u  \cdot \n _K \in L^2(\partial\K)$ and, if $s \leq k+1$, we have
				\begin{equation}\label{eq:approxdn}
				\|\nabla u \cdot \n _K - \nabla \PinablaK(u)\cdot \n_K \|^2_{0,\partial K} \lesssim h_K^{s-1} | u |_{s,K}.
				\end{equation}

%
%
%
%

	%
	%
	%
	%
	%


\subsection{The virtual element space}

For the sake of completeness, and to introduce the relevant notation, let us  recall the definition and some of the properties of the Virtual Element discretization space (see \cite{3DVEM}). We focus on the more complex three dimensional case, and we refer to \cite{basicVEM} for the two dimensional case.
The order $\p$  VE discretization space on $\Th$ is
defined, element by element, starting from the edges of the tessellation, where the discrete functions are defined as continuous degree $\p$ piecewise polynomials. The virtual element functions are  then subsequently defined on the faces, and  in the interior of the polyhedra. 
More precisely, on the boundary of each face  $\face$ we define the space: 
\begin{gather*}
	\Bone(\partial \face) = 
	\left\{g\in C^0(\partial \face) :  g_{|{e}}\in \mathbb{P}_\p(e) \text{ for all edge } e \subseteq \partial \face 
	\right\}.
\end{gather*}

We let  $\tVfp$ be defined   as 
\begin{gather}\label{defspace1}
	\tVfp =  \{ v \in C^0(\face):\, v_{|
		\partial \face} \in\Bone(\partial \face),\, 
	\Delta v \in \mathbb{P}_\p(\face) \}.
\end{gather}
It is known (see \cite{basicVEM}) that an element in $\tVfp$ is uniquely identified once its trace on $\partial \face$ and its moments up to order $\p$ are known. 
We let $\monomials{k-2} \subset\monomials{k}$ denote two bases for, respectively, $\Poly{k-2}(\face)$ and $\Poly{k}(\face)$.
We can then introduce the  {\em local virtual face space} $\Vfp$:
\begin{gather}\label{defspace2loc}
	\Vfp =  \{ v \in \tVfp:\  \int_\face  q\, v = \int_\face q\, \Pinablaf( v),\, \forall q \in \monomials{k} \setminus\monomials{k-2} \} ,
\end{gather}
where $\Pinablaf$ is defined according to \eqref{defPinabla}.

\

The {\em local boundary space} is built on the boundary of each polyhedron  $K$  by assembling the local face spaces:
$$ \Bone(\partial K) : = 
\left\{g\in C^0(\partial K) : g_{|\face}\in \Vfp \text{ for all face } \face \subseteq \partial K 
\right\}.
$$
Finally, the {\em local element space} on $K$ is defined as
\begin{equation}\label{defspace2} \VKp: = 
	\left\{v\in H^1(K) : v_{|\partial K}\in \Bone(\partial K)  \text{ and } \Delta v\in \mathbb{P}_{\p-2}(K)
	\right\} .
\end{equation}

The global discrete VE space   $V_h$ is finally assembled by continuity:
\begin{equation}\label{VEMm}
V_h : = 
\left\{V \in H^1(\Omega) : v_{|K}\in \VKp\text{ for all } K  \in \Th
\right\} .
\end{equation}
It can be easily checked that for all faces $\face$ and all polyhedrons $K$, $\Poly{k}(\face) \subseteq \Vfp$ and $ \mathbb{P}_{\p}(K)\subseteq \VKp$.

\

A function in $V_h$ is uniquely determined by the following degrees of freedom:
\begin{itemize} 
	\item the values at the vertices of the tesselation and the  values at the $\p-1$ internal nodes of the $\p+1$-points Gauss-Lobatto quadrature rule;
	on each edge $e$;
	\item the moments up to order $k-2$ on each face;
	\item the moments up to order $k-2$ in each element.
	\end{itemize}

In particular, for any given function $w \in H^2(\Omega)$ we can define the unique interpolant function $w_I \in V_h$ whose degrees of freedom coincide with the ones of $w$.
	The following local approximation bound \cite{basicVEM,3DVEM} holds: 
	if $w \in H^s(K)$, with $2 \leq s \leq \p+1$, then
	\begin{equation}\label{VEMapproxI}
	\| w - w_I \|_{0,K} + h_K | w - w_I |_{1,K} \lesssim h_K^s | w |_{s,K}.
	\end{equation}

%

\subsection{Computable bilinear forms and operators}

A key concept in the  definition of  virtual element methods is the one of {\em computability}. Essentially,  operators or bilinear forms, acting on virtual element functions, are said to be computable if the knowledge of the degrees of freedom of the argument functions is sufficient for the direct evaluation of the operator/bilinear form, without the need of solving the PDEs implicitly involved in the definition of $\VKp$.
 We recall that both the elliptic projector $\Pinablaf$  and the $L^2$ projector $\Pi^{0,k}_\face$  onto $\Poly{k}(\face)$  (see Section \ref{sec:notation}) are computable, as are the elliptic projector $\PinablaK$ onto $\Poly{k}(K)$ and the $L^2$ projector $\Pi^{0,k-2}_K$ onto $\Poly{k-2}(K)$. On the other hand, neither the projector $\Pi^{0,k}_K$, nor the bilinear form $a: H^1(\Omega) \times H^1(\Omega) \to \mathbb{R}$ and its local counterpart $a^K: H^1(K) \times H^1(K) \to \mathbb{R}$
\[ a(\phi,\psi) =  \int_\PolyDom \nabla \phi \cdot \nabla \psi, \qquad a^K(\phi,\psi) = \int_K\nabla \phi \cdot \nabla \psi,\]
 are computable. The bilinear form $a$ is replaced, in the definition of the virtual element discretization, by an approximate bilinear form $a_h: V_h \times V_h \to \mathbb{R}$
\[
a_h(u_h,v_h) = \sum_K a_h^K(u_h,v_h),
\]
 where the elemental approximate bilinear form $a_h^ K: \VKp \times\VKp \to \mathbb{R}$ is defined as
\[
a^{K}_h(\phi,\psi) = a^K(\PinablaK  (\phi), \PinablaK  (\psi) ) + \Svem(\phi - \PinablaK  (\phi), \psi - \PinablaK  (\psi)),
\]
the stabilizing bilinear form  $ \Svem$ being   any computable  symmetric  bilinear form satisfying
%
\begin{equation}
\label{defSK}
\cstar a^K(\phi,\phi) \leq  \Svem(\phi,\phi) \leq \Cstar a^K(\phi,\phi),\quad \forall \phi \in \VKp \ \text{ with } \Pinabla  \phi=0,
\end{equation}
with $\cstar$ and $\Cstar$ two positive constants independent of $K$. 
Different choices for the bilinear form $\Svem$ are available in the literature (see \cite{beirao_stab}), some of which allow to obtain bounds of the form \eqref{defSK} under the shape regularity assumption \eqref{shape_regular}, while others might require the tessellation to satisfy some more restrictive assumption.  It is out of the scope of this paper to detail all the possible constructions for such a term.
We briefly recall that the simplest, and widely used, choice is directly expressed in terms of the degrees of freedom as the suitably scaled euclidean scalar product \cite{3DVEM2}.
%
An alternative which gives sharper bounds  as the polynomial order increases and that we chose to use in the numerical tests  performed in Section \ref{sec:expes},  is the so called ``D-recipe" \cite{dassi_mascotto_3DVEM}, where $\Svem$ is defined as a suitably weighted (and, once again, properly scaled) euclidean scalar product of the vectors of degrees of freedom. 
Other recipes  include suitably scaled versions of the $L^2(\partial K)$ and of the $H^1(\partial K)/\mathbb{R}$ scalar products \cite{beirao_stab}, and  the
bilinear form corresponding to the Laplace-Beltrami operator on the boundary  $\partial K$  (a recipe proposed for the nonconforming version of the method but that applies also to the conforming version here considered, see \cite{BMPP_VEM_nonconforming_stab}).

\

The local discrete bilinear forms satisfy by construction the
following two properties, which play a key role in the analysis of virtual element methods:
\begin{itemize}
	\item {\em Stability and continuity}: 
	\begin{equation}\label{contcoercK}
| \phi |_{1,K}^2 \lesssim	a_h^K(\phi,\phi)\quad \forall \phi \in \VKp, \qquad\text{and}\qquad a^K_h(\phi,\psi) \lesssim | \phi |_{1,K} | \psi |_{1,K}\quad \forall \phi,\psi \in \VKp;
	\end{equation}
	\item {\em $\p$-consistency}: 
	\begin{equation}\label{mconsistency}
		a_h^K(\phi,p) = a^K(\phi,p) \qquad  \forall \phi\in \Vh \text{ and } p\in \Poly{\p}(K). 
	\end{equation}
\end{itemize}

\

The local stability and continuity property \eqref{contcoercK} implies, of course, the validity of global stability and continuity bounds:
	\begin{equation}\label{contcoerc}
	| \phi |_{1,\Omega}^2 \lesssim	a_h(\phi,\phi)\quad \forall \phi \in V_h, \qquad\text{and}\qquad a_h(\phi,\psi) \lesssim | \phi |_{1,\Omega} | \psi |_{1,K}\quad \forall \phi,\psi \in V_h.
	\end{equation}


\begin{remark} Assumption \ref{shape_regular} implies the validity of bounds \eqref{inversebase} through  \eqref{eq:approxdn}, as well as the validity of the virtual element interpolation bound  \eqref{VEMapproxI}.  It is also generally required to prove the bounds \eqref{defSK} for the most common choices of the stabilization term. We would however like to point out that it is not a necessary condition for our theoretical result to hold. Indeed, depending on the particular construction of the tessellation, the  bounds \eqref{inversebase} through  \eqref{eq:approxdn} as well as the interpolation bound \eqref{VEMapproxI} on which our theoretical analysis relies, might hold under weaker assumptions. In particular we recall that the bounds \eqref{inversebase} through  \eqref{eq:approxdn} on polynomials hold under the assumptions in \cite[Assumptions 13 and 30]{Cangiani_book}, which allow for elements with an a priori unbounded number of very small faces. 
\end{remark}

%
%
%

\section{Weakly imposing boundary condition on polygonal domains} \label{sec:BH-Nitsche} 
In the framework of virtual element discretizations, we specifically address, in this paper, the problem of imposing Dirichlet boundary conditions in a weak form. For the sake of simplicity, we focus on a simple model problem, namely  the Poisson equation 
\begin{equation}\label{prob_mod_PolyDom}
-\Delta u = f, \text{ in }\PolyDom, \qquad u = g \text{ on }\Gamma = \partial \PolyDom,
\end{equation}
 on a convex bounded domain $\Omega \subset \mathbb{R}^d$, $d=2,3$, which, to start with, we assume to be polygonal/polyhedral.  Having in mind our goal,  we consider the following  variational formulation of the Poisson equation:
 {\em given $f\in L^2(\Omega)$ and
 	$g \in H^{1/2}(\Gamma )$, find 
 	$u \in H^1(\Omega)$ and $\lambda \in (H^{1/2}(\Gamma)) '$ such that}
 \begin{gather}\label{BabuskaCont1}
 \int_{\Omega }\nabla u \cdot\nabla v + \langle \lambda, v \rangle =  \int_{\Omega} f v, \quad \forall v \in H^1(\Omega), \\ 
 \langle \mu,u \rangle =  \langle \mu, g \rangle \qquad  \forall \mu \in  (H^{1/2}(\Gamma))'\label{BabuskaCont2}
 \end{gather}
 where 
 $\langle \cdot, \cdot \rangle$ denotes the duality between $ (H^{1/2}(\Gamma))'$ and $H^{1/2}(\Gamma)$. The variational problem \multeqref{BabuskaCont1}{BabuskaCont2} admits a unique solution $(u, \lambda)$, that satisfies $\lambda = - \nabla u \cdot \n$ , see \cite{babuska}. Discretizing such a problem by a Galerkin approach yields 
  the Lagrange multiplier method, originally proposed by Babuska  \cite{babuska} in the context of the finite element method. It is well known that, 
  in order for a Galerkin discretization of \multeqref{BabuskaCont1}{BabuskaCont2}  to converge optimally,  
  the approximation spaces for the unknown $u$  in $\Omega$ and for the multiplier $\lambda$ on the boundary, must satisfy an ``inf-sup" condition. In order to relax such a requirement, it is possible to resort to a stabilized formulation, such as the one proposed by  Barbosa and Hughes in 
  \cite{HughesBarbosa}, which, given $V_h$ and $\Lambda_h$ finite element approximations of $H^1(\Omega)$ and $(H^{1/2}(\Gamma))'$, reads: \emph{
  	find $u_h \in V_h$, $\lambda_h \in \Lambda_h$ such that }
  	\begin{equation*}
  \int_{\Omega} \nabla u_h \cdot \nabla v_h + \int_\Gamma \lambda_h\, v_h + \int_\Gamma\mu_h\, u_h  - 
  	\alpha \sum_{\face \in \Fb} h_\face  \int_\face  \left( \lambda_h + \nabla u_h \cdot \nu \right)  \left( \mu_h +\dn v_h \right) 
  	= \int_{\Omega} f  v_h + \int_\Gamma  g \mu_h,
  	\end{equation*}
  	where we let $\dn w_h$ be a shorthand for $\nabla w_h \cdot \nu$.
  	 Such a formulation allows the approximation spaces $V_h$ and $\Lambda_h$  to be chosen arbitrarily, independently of each other, and  it can be  proven that, provided the stabilization parameter $\alpha > 0$ is small enough, the resulting discrete problem is well posed, and yields convergence with optimal order. We now  want to  extend such a formulation to the VEM.
%
%
%
%
%
%
%
%
%


\subsection{\bf Symmetric Barbosa-Hughes formulation for VEM }\label{sec:BHVEM} 
In order to discretize \multeqref{BabuskaCont1}{BabuskaCont2}, with $V_h$ being the virtual element space defined in \eqref{VEMm}, we  
choose $\Lambda_h$ as a space of discontinuous piecewise polynomials on the tessellation $\Fb$: \[\Lambda_h = \Poly{k'}(\Fb), \qquad k^\prime\in \{k,k-1\}.\]

We propose the following symmetric Barbosa--Hughes virtual element  discretization (BH-VEM):

\
 
 \noindent \emph{find $u_h \in V_h$, $\lambda_h\in \Lambda_h$ such that for all $v_h \in V_h$, $\mu_h\in \Lambda_h$  it holds that}
\begin{multline}\label{BH_VEM}
	a_h(u_h,v_h) + \int_\Gamma \lambda_h\, v_h + \int_\Gamma\mu_h\, u_h \\ - 
	\alpha \sum_{\face \in \Fb} \hKf  \int_\face  \left( \lambda_h + \dn \Pn(u_h) \right)  \left( \mu_h +\dn \Pn(v_h) \right) 
	= \int_{\PolyDom} f \, \Prhs(v_h) + \int_\Gamma  g \mu_h ,
\end{multline}
where
$\Pinabla : H^1(\Omega) \to \pwPoly$, $\Pi^0: L^2(\Omega) \to \pwPoly$ and $\Prhs : H^1(\Omega) \to \Poly{\max\{0,k-2\}}(\Tess)$ are defined as
\begin{gather*}
\Pi^\nabla (\phi)_{|K} = \PinablaK (\phi_{|K}), \qquad
\Pi^0 (\phi)_{|K} = \Pi^{0,k}_K (\phi_{|K}),\\[4mm] \Prhs (\phi)_{|K} = \begin{cases}
|\partial K |^{-1} \int_{\partial K}\phi & k=1,\\[2mm]
\Pi^{0,k-2}_K (\phi) & k \geq 2,
\end{cases}\qquad \forall K \in \Th.
\end{gather*}
The choice of the positive stabilization constant $\alpha$  will be discussed later  on. Observe that the formulation \eqref{BH_VEM} is obtained from the standard symmetric Barbosa--Hughes formulation by replacing the non computable terms (particularly the ones involving the $\partial_\nu$ operator) with suitable approximations involving computable projections, so that all the terms appearing in \eqref{BH_VEM} are computable.
It will be convenient, in the following, to rewrite \eqref{BH_VEM} in compact form as 
\[
\ahBH(u_h,\lambda_h;v_h,\mu_h) = \int_{\PolyDom} f_h\, v_h + \int_\Gamma  g\, \mu_h ,
\]
where the stabilized bilinear form $\ahBH: \Vhglob \times \Vhglob \to \mathbb{R}$ is defined as
	\begin{multline}\label{def:ahBH}
	\ahBH(u_h,\lambda_h;v_h,\mu_h) = 	a_h(u_h,v_h) + \int_\Gamma \lambda_h\, v_h + \int_\Gamma\mu_h\, u_h \\ - 
	\alpha \sum_{\face \in \Fb} \hKf  \int_\face  \left( \lambda_h +\dn \Pn(u_h) \right)  \left( \mu_h +\dn \Pn(v_h) \right),
	\end{multline}
where, for $f \in L^2(\Omega)$,  $f_h \in H^{-1}(\Omega)$ is defined by
\[
\int_{\Omega} f_h \phi = \int_{\Omega} f \Prhs \phi.
\]
Observe that, as for $k > 1$ we have that $\Prhs$ is selfadjont, in such a case we have that $f_h = \Prhs f$.

\

In order to analyze Problem \eqref{BH_VEM} we start by introducing the following mesh dependent norms on $\Gamma$:
\begin{gather}\label{defnormhG}
\| \lambda  \|^2_{-1/2,h} = \sum_{\face\in \Fb} \hKf \| \lambda \|_{0,\face}^2, \qquad \| \phi  \|^2_{1/2,h} = \sum_{\face\in \Fb} \hKf^{-1} \| \phi \|_{0,\face}^2.
\end{gather}
Observe that it holds that
\begin{equation}\label{conth}
\int_\Gamma \lambda\, \phi \lesssim \| \lambda \|_{-1/2,h} \| \phi \|_{1/2,h}, \qquad \forall \lambda, \phi \in L^2(\Gamma).
\end{equation}

We now let $\Pib: L^2(\Gamma) \to \Lambda_h$ denote the $L^2(\Gamma)$ orthogonal projection onto the space of discontinuous  piecewise polynomials of order less than or equal to $k'$, and we observe that  we have
\[
\Pib(u)_{|\face} = \Pi^{0,k'}_\face(u_{|\face})\qquad \forall \face \in \Fb.
 \] 
We introduce a mesh dependent semi norm on the space
\(H^1(\Tess) := \{u \in L^2(\Omega):\ u|_K \in  H^1(K)\}\) of discontinuous piecewise $H^1$ functions, defined as
\begin{equation}\label{normvsnormh}
\| u \|_{1,h}^2 = | u |^2_{1,h} + \| \Pib u \|^2_{1/2,h}\quad \text{ with } \quad | u |_{1,h}^2 = \sum_K | u |_{1,K}^2.
\end{equation}

Since the function identically assuming the value $1$ on $\Gamma$ belongs to $\Lambda_h$, we have that $\int_\Gamma  u =  \int_{\Gamma} \Pib u$. Letting $\bar u = | \Gamma |^{-1} \int_{\Gamma} u = | \Gamma |^{-1} \int_{\Gamma} \Pib u$ denote the average on $\Gamma$ of $u \in H^1(\Omega)$, it is then not difficult to see  that $| \bar u | \lesssim \| \Pib u \|_{0,\Gamma} \lesssim \| \Pib u \|_{1/2,h}$ (the implicit constant in the inequalities depending on $|\Gamma|$). We can then write
\begin{equation}\label{defnormhO}
\| u \|_{1,\Omega} \lesssim 
\| u -  \bar u\|_{1,\Omega} 
+ \| \bar u \|_{1,\Omega} \lesssim | u |_{1,\Omega} 
+ \|\bar u \|_{0,\Omega} = | u |_{1,\Omega} + | \Omega|^{1/2} | \bar u | \lesssim 
\| u \|_{1,h},
\end{equation}
where we used a Poincar\'e Friedrichs inequality on the space of $H^1(\Omega)$ functions with zero average on $\Gamma$, and the fact that the $H^1(\Omega)$ seminorm of the constant function $\bar u$ is zero. Then $\| \cdot \|_{1,h}$ is a norm on $H^1(\Omega)$.

\

We prove the following Lemma.
\begin{lemma}\label{prop:3.1} For all $u_h, v_h \in V_h$, $\lambda_h, \mu_h \in \Lambda_h$ it holds that
	\begin{equation}\label{contBH}
	\ahBH (u_h,\lambda_h;v_h, \mu_h) \lesssim (	\|u_h \|_{1,h} + \| \lambda_h \|_{-1/2,h})(
		\|v_h \|_{1,h} + \| \mu_h \|_{-1/2,h}
	).
	\end{equation}
Moreover,	there exists $\alpha_0$ such that, if $\alpha < \alpha_0$, then for all $u_h \in V_h$, $\lambda_h \in \Lambda_h$ it holds that
	\begin{equation}\label{stabBH}
	\sup_{
		(v_h,\mu_h) \in V_h\times \Lambda_h} \frac{\ahBH(u_h,\lambda_h;v_h,\mu_h)}{
		\|v_h \|_{1,h} + \| \mu_h \|_{-1/2,h}
		} \gtrsim 	\|u_h \|_{1,h} + \| \lambda_h \|_{-1/2,h}.
	\end{equation}
	\end{lemma}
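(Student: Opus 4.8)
The continuity \eqref{contBH} is routine and I would dispatch it term by term. The form $a_h$ is bounded by $\snorm{u_h}_{1,\Omega}\snorm{v_h}_{1,\Omega}$ thanks to \eqref{contcoerc}, and $\snorm{u_h}_{1,\Omega}\le\norm{u_h}_{1,h}$. For the coupling terms I would use that $\lambda_h\in\Lambda_h$ and $\Pib$ is the $L^2(\Gamma)$ projection onto $\Lambda_h$ to write $\int_\Gamma\lambda_h v_h=\int_\Gamma\lambda_h\,\Pib v_h$, and then invoke \eqref{conth} to get $\int_\Gamma\lambda_h v_h\lesssim\norm{\lambda_h}_{-1/2,h}\norm{\Pib v_h}_{1/2,h}\le\norm{\lambda_h}_{-1/2,h}\norm{v_h}_{1,h}$, and symmetrically for $\int_\Gamma\mu_h u_h$. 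The only slightly delicate point is the stabilization term: after a Cauchy--Schwarz in the $\norm{\cdot}_{-1/2,h}$ pairing it suffices to bound $\sum_{\face\in\Fb}\hKf\norm{\dn\Pn(u_h)}_{0,\face}^2$, which I would control by the inverse trace inequality \eqref{inversenormal} applied to the polynomial $\Pn(u_h)$ on each boundary element, giving $\hKf\norm{\dn\Pn(u_h)}_{0,\face}^2\lesssim\snorm{\Pn(u_h)}_{1,K}^2\le\snorm{u_h}_{1,K}^2$; summing over boundary faces (at most $N_0$ per element) yields $\lesssim\snorm{u_h}_{1,\Omega}^2$.

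For the inf-sup bound \eqref{stabBH} the plan is the classical two-test-function argument for stabilized saddle-point forms. First I would test with $(v_h,\mu_h)=(u_h,-\lambda_h)$: the two off-diagonal boundary terms cancel, and a direct computation gives
\begin{equation*}
\ahBH(u_h,\lambda_h;u_h,-\lambda_h)=a_h(u_h,u_h)-\alpha\sum_{\face\in\Fb}\hKf\norm{\dn\Pn(u_h)}_{0,\face}^2+\alpha\norm{\lambda_h}_{-1/2,h}^2.
\end{equation*}
Using the coercivity $a_h(u_h,u_h)\gtrsim\snorm{u_h}_{1,\Omega}^2$ from \eqref{contcoerc} and the bound on the $\dn\Pn$ term just established, this controls $\snorm{u_h}_{1,\Omega}^2+\norm{\lambda_h}_{-1/2,h}^2$ provided $\alpha$ is small enough that the wrong-sign middle term is absorbed into the coercivity term.

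What is missing from this first test is the trace contribution $\norm{\Pib u_h}_{1/2,h}^2$ entering $\norm{u_h}_{1,h}^2$, and recovering it is the heart of the argument. I would build a dedicated multiplier $\zeta_h\in\Lambda_h$ by setting $\zeta_h|_\face=\hKf^{-1}\Pi^{0,k'}_\face(u_h)$, which satisfies $\norm{\zeta_h}_{-1/2,h}=\norm{\Pib u_h}_{1/2,h}$ and, since $\Pi^{0,k'}_\face$ is a projection, $\int_\Gamma\zeta_h u_h=\norm{\Pib u_h}_{1/2,h}^2$. Testing with $(0,\zeta_h)$ then produces $\int_\Gamma\zeta_h u_h$ minus a stabilization remainder that I would estimate by Cauchy--Schwarz and Young's inequality, giving
\begin{equation*}
\ahBH(u_h,\lambda_h;0,\zeta_h)\gtrsim\norm{\Pib u_h}_{1/2,h}^2-\alpha\bigl(\norm{\lambda_h}_{-1/2,h}+\snorm{u_h}_{1,\Omega}\bigr)\norm{\Pib u_h}_{1/2,h}.
\end{equation*}

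Finally I would combine the two by testing with $(v_h,\mu_h)=(u_h,-\lambda_h+\delta\zeta_h)$ and using bilinearity. After a Young's inequality on the cross terms the lower bound reads, schematically,
\begin{equation*}
\bigl(c_0-C\alpha-C\delta\alpha^2\bigr)\snorm{u_h}_{1,\Omega}^2+\alpha\bigl(1-C\delta\alpha\bigr)\norm{\lambda_h}_{-1/2,h}^2+\tfrac{\delta}{2}\norm{\Pib u_h}_{1/2,h}^2.
\end{equation*}
The decisive point is the order of the parameter choices: I would first fix $\alpha<\alpha_0$ small enough that $c_0-C\alpha>0$, and then fix $\delta$ small enough (depending on $\alpha$) that all three coefficients are positive and bounded below. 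Since the test pair satisfies $\norm{v_h}_{1,h}+\norm{\mu_h}_{-1/2,h}\lesssim\norm{u_h}_{1,h}+\norm{\lambda_h}_{-1/2,h}$ (using $\norm{\zeta_h}_{-1/2,h}=\norm{\Pib u_h}_{1/2,h}\le\norm{u_h}_{1,h}$), dividing by the test-function norm yields \eqref{stabBH}. The main obstacle is exactly this coupled calibration of $\alpha$ and $\delta$: the trace-recovery test function reintroduces both $\snorm{u_h}_{1,\Omega}$ and $\norm{\lambda_h}_{-1/2,h}$ with factors of $\alpha$ coming from the stabilization term, so one must verify that these perturbations are genuinely of lower order and do not destroy the positivity gained from the first test.
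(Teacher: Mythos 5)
Your proposal is correct and follows essentially the same route as the paper: the same continuity estimates via \eqref{conth} and \eqref{inversenormal}, and the same two test pairs $(u_h,-\lambda_h)$ and $(0,\hKf^{-1}\Pib u_h)$ combined to recover $\| \Pib u_h \|_{1/2,h}$ (the paper simply takes your weight $\delta=1$, since every cross term produced by the second test already carries a factor $\alpha$ and is absorbed by choosing $\alpha$ small). The only substantive difference is that the paper bounds $a_h(u_h,u_h)$ from below by $| \Pn u_h |_{1,h}^2 + c_* | (1-\Pn)u_h |_{1,h}^2$ rather than by generic coercivity, which makes the threshold $\alpha_0$ independent of the VEM stabilization constants $c_*$, $C^*$; your variant proves the lemma but with an $\alpha_0$ depending on those constants.
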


\begin{proof} 
	By the definition of $\Pib$, and using \eqref{conth}, we can write
	\[
\int_\Gamma \lambda_h\, v_h  = \sum_\face \int_{\face} \lambda_h v_h = \sum_{\face }
	\int_\face \lambda_h \Pif (v_h)
	\leq \| \lambda_h \|_{-1/2,h} \| \Pib (v_h) \|_{1/2,h}.
	\]
	Moreover we can write
	\[
	\sum_{\face \in \Fb} \hKf  \int_\face  \left( \lambda_h +\dn \Pn(u_h) \right)  \left( \mu_h +\dn \Pn(v_h) \right) \lesssim \|  \lambda_h +\dn \Pn(u_h) \|_{-1/2,h} \| \mu_h +\dn \Pn(v_h) \|_{-1/2,h}.
	\]
	In view of the definition \eqref{defhtildef} of $\hKf$, thanks to \eqref{inversenormal}, we easily see that
\begin{equation}\label{bounddn}
\| \dn \Pn(u_h) \|^2_{-1/2,h}   \leq \sum_K \hel \| \nabla \Pn(u_h) \|_{0,\partial K}^2 \lesssim | \Pn(u_h) |_{1,h}^2 \lesssim | u_h |_{1,h}^2.
\end{equation}
The remaining terms are naturally bound in the considered norm and then \eqref{contBH} is easily proven.

\

In order to prove \eqref{stabBH}, we set at first $v_h = u_h$ and $\mu_h = - \lambda_h$. This gives the following bound, where we use \eqref{defSK} and \eqref{inversenormal}:
		\begin{multline}\label{coerc1}
	\ahBH(u_h,\lambda_h;u_h,-\lambda_h) 	=\\	
	a_h(u_h,u_h) + \alpha \sum_{\face \in \Fb} \hKf \int_\face |\lh|^2  -
		\alpha \sum_{\face \in \Fb} \hKf \int_\face \left| \dn \Pn(u_h) \right|^2 
 \geq \\
		\snorm{\Pinabla (u_h)}^2_{1,h} +c_* \snorm{(1-\Pinabla)u_h}^2_{1,h}
		+ \alpha \| \lh \|_{-1/2,h}^2  - \alpha \Cinv^2 \snorm{\Pinabla (u_h)}^2_{1,\Omega}  \\[2mm]
	=	(1-\alpha \Cinv^2) \snorm{\Pinabla (u_h)}^2_{1,h}  + c_* \snorm{(1-\Pinabla)u_h}^2_{1,h} + \alpha \| \lh \|^2_{-1/2,h}.
		\end{multline}

		Setting $v_h = 0$ and $\bar \mu_h|_\face = \hKf^{-1}  \Pif u_h$, and observing that $\| \bar \mu_h \|_{-1/2,h} = \| \Pib u_h \|_{1/2,h}$,  yields instead
		\begin{multline}\label{partialbound}
		\ahBH(u_h,\lambda_h;0,\bar \mu_h) =
	\int_{\Gamma} u_h \bar \mu_h - \alpha \sum_{\face} \hKf \int_{\face} (\lambda_h + \dn \Pn (u_h)  ) \bar \mu_h =\\
	\| \Pib u_h \|^2_{1/2,h} - \alpha \sum_{\face} \int_{\face} (\lambda_h + \dn \Pn (u_h)  )	\Pif(u_h)
		\geq \\
		 \| \Pib (u_h) \|_{1/2,h}^2 - \alpha \| \lambda_h \|_{-1/2,h} \| \Pib(u_h)
\|_{1/2,h} - \alpha \|  \dn \Pn (u_h) \|_{-1/2,h} \| \Pib(u_h) \|_{1/2,h} \geq \\[2mm]
	 \| \Pib (u_h) \|_{1/2,h}^2 - \alpha \| \lambda_h \|_{-1/2,h} \| \Pib(u_h)
	 \|_{1/2,h} - \alpha \Cinv |  \Pn (u_h) |_{1,h} \| \Pib(u_h) \|_{1/2,h},
		\end{multline}
		which, by Young's inequality, gives us
		\[
			\ahBH(u_h,\lambda_h;0,\bar \mu_h) \geq  \| \Pib (u_h) \|_{1/2,h}^2 (1-\alpha) - \frac \alpha 2 \| \lambda_h \|^2_{-1/2,h}  -  \frac{\alpha}{2} \Cinv^2 | \Pn(u_h) |_{1,h}^2.
		\]
		
Then, 
\begin{multline*}
\ahBH(u_h,\lambda_h;u_h,\bar\mu_h-\lambda_h) \geq \\
(1-\frac 3 2 \alpha \Cinv^2) | \Pn(u_h)  |_{1,h}^2 +  c_* \snorm{(1-\Pinabla)u_h}^2_{1,h} + \frac \alpha 2 \| \lambda_h \|_{-1/2,h}  + (1-\alpha ) \| \Pib(u_h) \|_{1/2,h}.
\end{multline*}		
		If $\alpha < \alpha_0 = \min \{ 1,  2/(3\Cinv^2) \}$, the coefficients of all the terms on the right hand side are strictly positive, finally yielding
	\begin{multline*}
	\ahBH(u_h,\lambda_h;u_h,\bar\mu_h-\lambda_h) \gtrsim \| u_h \|_{1,h}^2 + \| \lambda_h \|_{-1/2,h}^2\\[1.5mm] \gtrsim (
	\| u_h \|_{1,h} + \| \lambda_h \|_{-1/2,h}
	)(
	\| u_h \|_{1,h} + \| \bar\mu_h-\lambda_h \|_{-1/2,h}
	),
		\end{multline*}
		where we used that $\| \bar \mu_h \|_{-1/2,h} \lesssim \| u_h \|_{1,h}$.
The implicit constant in the inequality depends, of course, on $\alpha$. The desired bound is obtained by dividing both sides by  $(
\| u_h \|_{1,h} + \| \bar\mu_h-\lambda_h \|_{-1/2,h}
)$.	
\end{proof}

\begin{remark}
To ensure computability, in formulating the Barbosa--Hughes method for VEM,
	 the normal derivative of the discrete test and trial functions is replaced with the normal derivative applied to the polynomial part obtained via the projector $\Pn$. Rather than being an obstacle to the analysis, this approximation does actually help in attaining stability. Indeed, it allows to use the inverse inequality \eqref{inversenormal}, which, under our assumption on the mesh, we know to hold for polynomials, thus avoiding the need of proving it for general virtual element functions. More importantly, it yields a constant $\alpha_0$ which is independent of the constants $\cstar$ and $\Cstar$ (and therefore, independent of the choice of the stabilization for the VEM bilinear form).
	\end{remark}

Thanks to Proposition \ref{prop:3.1}, the following Theorem holds.

\begin{theorem}\label{thm:BH} There exists $\alpha_0$ such that if $\alpha < \alpha_0$,  Problem \eqref{BH_VEM} admits a unique solution $(u_h,\lambda_h)$ satisfying the following error estimate: if $u \in H^{k+1}(\Omega)$ then 
	\[
	\| u - u_h \|_{1,h} + \| \lambda-\lambda_h \|_{-1/2,h} \lesssim h^k | u |_{k+1,\Omega}.
	\]
\end{theorem}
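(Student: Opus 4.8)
The plan is to combine the well-posedness furnished by Lemma \ref{prop:3.1} with a Strang-type consistency argument. Existence and uniqueness are immediate: since \eqref{stabBH} is an inf-sup bound for a square, finite-dimensional linear system, the associated operator is injective, hence bijective, so \eqref{BH_VEM} has a unique solution $(\uh,\lh)$ whenever $\alpha<\alpha_0$. For the error estimate I would introduce the virtual element interpolant $\uI\in\Vh$ of $u$ and the projected multiplier $\lI=\Pib\lambda\in\Lh$ (recall $\lambda=-\nabla u\cdot\n$ from \cite{babuska}), and split the error through the triangle inequality as
\[
\| u - \uh \|_{1,h} + \| \lambda - \lh \|_{-1/2,h}\leq\big(\| u - \uI \|_{1,h} + \| \lambda - \lI \|_{-1/2,h}\big)+\big(\| \uI - \uh \|_{1,h} + \| \lI - \lh \|_{-1/2,h}\big).
\]
The first bracket is handled directly: $|u-\uI|_{1,h}\lesssim h^k|u|_{k+1,\PolyDom}$ by \eqref{VEMapproxI}, the boundary part $\|\Pib(u-\uI)\|_{1/2,h}$ is controlled via a scaled trace inequality together with \eqref{VEMapproxI}, and $\|\lambda-\lI\|_{-1/2,h}\lesssim h^k|u|_{k+1,\PolyDom}$ follows from \eqref{approxjface} applied to $\lambda=-\nabla u\cdot\n$.

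The second bracket is estimated by the stability \eqref{stabBH}. Setting $e_h=\uI-\uh$ and $\varepsilon_h=\lI-\lh$, and using that $\ahBH(\uh,\lh;\vh,\mh)$ equals the right-hand side of \eqref{BH_VEM}, i.e. $\int_\PolyDom f\,\Prhs\vh+\int_\Gamma g\,\mh$, one obtains
\[
\|e_h\|_{1,h}+\|\varepsilon_h\|_{-1/2,h}\lesssim\sup_{(\vh,\mh)\in\Vh\times\Lh}\frac{\ahBH(\uI,\lI;\vh,\mh)-\int_\PolyDom f\,\Prhs\vh-\int_\Gamma g\,\mh}{\|\vh\|_{1,h}+\|\mh\|_{-1/2,h}}.
\]
It thus remains to bound the numerator, the \emph{consistency error}, by $h^k|u|_{k+1,\PolyDom}(\|\vh\|_{1,h}+\|\mh\|_{-1/2,h})$. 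Here I would use that the exact solution satisfies $\int_\PolyDom\nabla u\cdot\nabla\vh+\int_\Gamma\lambda\vh=\int_\PolyDom f\vh$ and $\int_\Gamma\mh u=\int_\Gamma\mh g$, and crucially that $\lambda+\nabla u\cdot\n=0$ on $\Gamma$. The consistency error then splits into four contributions: (i) the volume mismatch $a_h(\uI,\vh)-\int_\PolyDom\nabla u\cdot\nabla\vh$, bounded by inserting an elementwise polynomial best approximation of $u$ and invoking the $k$-consistency \eqref{mconsistency}, the continuity \eqref{contcoercK} and \eqref{VEMapproxI}; (ii) the load mismatch $\int_\PolyDom f(\vh-\Prhs\vh)$, bounded through the orthogonality of $\Prhs$, \eqref{approxj} and a Poincar\'e estimate (the case $k=1$ using the boundary-average definition of $\Prhs$), which yields $h^k|u|_{k+1,\PolyDom}|\vh|_{1,h}$ since $f=-\Delta u\in H^{k-1}$; (iii) the boundary pairings $\int_\Gamma(\lI-\lambda)\vh$ and $\int_\Gamma\mh(\uI-u)$, bounded via the orthogonality of $\Pib$, the duality \eqref{conth} and trace estimates; and (iv) the stabilization term $-\alpha\sum_{\face\in\Fb}\hKf\int_\face(\lI+\dn\Pn(\uI))(\mh+\dn\Pn(\vh))$.

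The step I expect to be the main obstacle is term (iv), which must be shown to be a genuine $O(h^k)$ consistency error rather than an $O(1)$ one. The key is to write
\[
\lI+\dn\Pn(\uI)=(\lI-\lambda)+(\lambda+\nabla u\cdot\n)+\dn\big(\Pn(\uI)-u\big),
\]
where the middle term vanishes identically, so that $\|\lI+\dn\Pn(\uI)\|_{-1/2,h}\lesssim h^k|u|_{k+1,\PolyDom}$ after estimating $\|\lI-\lambda\|_{-1/2,h}$ by \eqref{approxjface} and $\|\dn(\Pn(\uI)-u)\|_{-1/2,h}$ by splitting it as $\dn\Pn(\uI-u)+(\dn\Pn(u)-\dn u)$, invoking the inverse inequality \eqref{inversenormal} with \eqref{VEMapproxI} for the first piece and the normal-flux approximation \eqref{eq:approxdn} for the second; the companion factor obeys $\|\mh+\dn\Pn(\vh)\|_{-1/2,h}\lesssim\|\mh\|_{-1/2,h}+|\vh|_{1,h}$ by \eqref{bounddn}. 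Assembling (i)--(iv) gives the consistency bound, hence $\|e_h\|_{1,h}+\|\varepsilon_h\|_{-1/2,h}\lesssim h^k|u|_{k+1,\PolyDom}$, and combining with the interpolation estimate yields the claim. The only care needed throughout is that $\vh$ is virtual, so every derivative of it must be routed through the projection $\Pn$ and the mesh-dependent norms, which is precisely what the formulation \eqref{BH_VEM} and the bound \eqref{bounddn} are designed to allow.
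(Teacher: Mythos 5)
Your proposal is correct and takes essentially the same route as the paper: existence/uniqueness from the inf-sup bound \eqref{stabBH}, a Strang-type splitting through the interpolants $\uI$ and $\lI=\Pib\lambda$, the same consistency decomposition into volume, load, boundary and stabilization contributions, and the same key treatment of the stabilization term via $\lI+\dn\Pn(\uI)=(\lI-\lambda)+(\lambda+\dn u)+\dn(\Pn(\uI)-u)$ handled with \eqref{approxjface}, \eqref{inversenormal}, \eqref{VEMapproxI} and \eqref{eq:approxdn}. (Your sign convention $\lambda=-\nabla u\cdot\n$ is the one that makes the middle term vanish; the paper's proof writes $\lambda=\dn u$ in two places, which is evidently a typo.)
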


\begin{proof} Existence and uniqueness of the solution $u_h$ follow, by standard arguments, from Proposition \ref{prop:3.1} (see, e.g., \cite{BoffiBrezziFortin}).
Let then $\uI  \in \Vh$ denote the VEM interpolant of $u$, and let 
$u_\pi = \Pi^0 (u)\in \Poly{k} (\Tess)$ and 
$\lI = \Pib (\lambda )\in \Lh$ denote, respectively the $L^2$ projections of $u$ onto $\Poly k(\Tess)$ and of $\lambda = \dn u$ onto $\Lambda_h$. Thanks to \eqref{stabBH}, there exist $\vh\in \Vh$, $\mh \in \Lh$, such that 
\[
\| \uh - \uI  \|_{1,h} + \| \lh - \lI \|_{-1/2,h}  \lesssim \frac{
	\ahBH(
 \uh - \uI ,\lh - \lI ; v_h, \mu_h
	)
	}{
	\| \vh \|_{1,h} + \| \mh \|_{-1/2,h}
	}.
\]
After possibly renormalizing $v_h$ and $\mu_h$, we can always assume that
\begin{equation}\label{normalization}\| \vh \|_{1,h} + \| \mh \|_{-1/2,h} = 1.\end{equation} 

Now we can write
\begin{multline*}\ahBH(\uh - \uI ,\lh - \lI ; v_h,\mu_h
)\\ =
a_h(\uh - \uI, \vh) + \int_{\Gamma} (\lh - \lI)\, \vh + \int_{\Gamma} (\uh - \uI)\, \mu_h
\\ - \alpha \sum_\face \hKf \int_\face (\lh - \lI + \dn \Pn(\uh - \uI))(\mh + \dn \Pn(\vh)) 
\\
\pm a_h(u_\pi,\vh) \pm a(u,\vh) \pm \int_{\Gamma} \lambda \vh \pm \int_{\Gamma } \mh u \mp 
\alpha \sum_\face \hKf \int_\face (\lambda + \dn \Pn(u)(\mh + \dn \Pn(\vh)) .
\end{multline*}

 Using the $k$ consistency property \eqref{mconsistency} on one of the terms $a_h(u_\pi,v_h)$, the identity $\lambda = \dn u$
 	and using \eqref{BabuskaCont1}, \eqref{BabuskaCont2} and \eqref{BDT_BH_VEM}, 
we can then write
\begin{multline}
\| \uh - \uI  \|_{1,h} + \| \lh - \lI \|_{-1/2,h}  \lesssim \ahBH(\uh - \uI ,\lh - \lI ; v_h,\mu_h
)\\[2mm] 
= \int_{\Omega } (f_h - f)\, \vh + a_h(u_\pi - u_I, v_h) + \sum_K\int_{K}\nabla(u - u_\pi)\cdot\nabla v_h
+ \int_{\Gamma} (\lambda - \lI)\, \vh + \int_{\Gamma} (u - \uI)\, \mu_h 
\\
- \alpha \sum_\face \hKf \int_\face (\lambda - \lI + \dn \Pn(u - \uI))(\mh + \dn \Pn(\vh)) 
\\ + 
\alpha \sum_\face \hKf \int_\face (\dn \Pn(u) - \dn u)(\mh + \dn \Pn(\vh)) = A + B + C + D + E + F + G.
\end{multline}

 Observe that the normalization \eqref{normalization} of $\vh$ and $\mh$ implies that both $\| \vh \|_{1,h}$ and $\| \mh \|_{-1/2,h}$ are less than or equal to $1$.
 The terms $A$, $B$ and $C$ are the ones that are usually encountered in the analysis of the VEM (see \cite{basicVEM}), and the following bounds hold:
 \begin{equation}
 \label{boundABC}
 A \lesssim h^k | f |_{k-1}, \qquad B + C \lesssim h^k | u |_{k+1,\Omega}.
 \end{equation}

We individually bound the remaining terms. Since, for $k \geq 1$, $u \in H^{k+1}(\Omega)$ implies that $\lambda \in H^{k-1/2}(\face)$ for all $\face \in  \Fb$, with 
\[
\sum_{\face \in \Fb \cap \FK} | \lambda |^2_{k-1/2,\face} \lesssim | u |^2_{k+1,K},
\]	
using \eqref{approxjface} we easily see that
\begin{equation}\label{boundD}
D \leq \| \lambda - \lI \|_{-1/2,h} \lesssim h^{1/2} \| \lambda - \lI \|_{0,\Gamma} \lesssim h^k | u |_{k+1}.
\end{equation}
Moreover we have that
\begin{equation}\label{boundE}
E \leq \| u - \uI \|_{1/2,h} \lesssim h^k | u |_{k+1,\Omega},
\end{equation}
and, using \eqref{inversenormal}
\begin{multline}\label{boundF}
	F \lesssim \| \lambda - \lI \|_{-1/2,h} + \| \dn \Pn (u - \uI) \|_{-1/2,h} \lesssim \| \lambda - \lI \|_{-1/2,h} + \|  \Pn (u - \uI) \|_{1,h} \\[2mm]
	\lesssim \| \lambda - \lI \|_{-1/2,h} + \|  u - \uI \|_{1,\Omega} \lesssim h^k | u |_{k+1,\Omega}.
	\end{multline}
	Finally, using \eqref{eq:approxdn}  we bound $G$ as
	\begin{multline}\label{boundG}
	G^2 \leq \| \dn(\Pn(u) -  u) \|^2_{-1/2,h} = \sum_K \sum_{\face\in \Fb\cap \FK} \hKf \| \dn(\Pn(u) - u) \|_{0,\face}^2 \\ \leq h \sum_K  \| \dn(\Pn(u) - u) \|^2_{0,\partial K}
	 \lesssim 
	 h^{2k} | u |^2_{k+1,\Omega}.
	\end{multline}
	Combining \crefrange{boundABC}{boundG} and using a triangular inequality  we obtain the desired bound.
\end{proof}

In view of \eqref{normvsnormh} we immediately obtain the following corollary.
\begin{corollary}\label{cor:errorH1}
	Provided $\alpha < \alpha_0$, $\alpha_0$ given by  Theorem \ref{theo:error_BH}, we have that
	\[
	\| u - u_h \|_{1,\Omega} \lesssim h^k | u |_{k+1}.
	\]
	\end{corollary}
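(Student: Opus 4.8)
The plan is to deduce this directly from the error estimate of Theorem~\ref{thm:BH} together with the norm domination established in~\eqref{defnormhO}. The key observation is that the chain of inequalities in~\eqref{defnormhO}, which relies on a Poincar\'e--Friedrichs inequality together with the definition~\eqref{normvsnormh} of $\| \cdot \|_{1,h}$, shows that $\| w \|_{1,\Omega} \lesssim \| w \|_{1,h}$ for every $w \in H^1(\Omega)$. Since $u \in H^{k+1}(\Omega) \subset H^1(\Omega)$ and $u_h \in V_h \subset H^1(\Omega)$ by construction of the global virtual element space~\eqref{VEMm}, the difference $u - u_h$ is a genuine (globally conforming) $H^1(\Omega)$ function, so this domination applies to it.

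First I would apply~\eqref{defnormhO} with $w = u - u_h$ to obtain $\| u - u_h \|_{1,\Omega} \lesssim \| u - u_h \|_{1,h}$. Then I would invoke Theorem~\ref{thm:BH}, which, under the assumption $\alpha < \alpha_0$, bounds $\| u - u_h \|_{1,h}$ --- indeed even the stronger quantity $\| u - u_h \|_{1,h} + \| \lambda - \lambda_h \|_{-1/2,h}$ --- by $h^k | u |_{k+1,\Omega}$. Concatenating the two estimates yields the claim.

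There is essentially no obstacle here, as the result is an immediate corollary. The only point worth checking is that the Poincar\'e--Friedrichs step used in~\eqref{defnormhO} is legitimate for $u - u_h$, which is precisely why I emphasised above that $u - u_h$ is globally $H^1$-conforming rather than merely piecewise $H^1$: on such functions the broken seminorm $| \cdot |_{1,h}$ entering $\| \cdot \|_{1,h}$ coincides with the true $H^1(\Omega)$ seminorm, so no interface-jump or additional consistency contributions arise and the domination can be applied verbatim.
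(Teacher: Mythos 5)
Your proposal is correct and is exactly the paper's argument: the corollary is obtained by combining the estimate of Theorem~\ref{thm:BH} with the norm domination $\| \cdot \|_{1,\Omega} \lesssim \| \cdot \|_{1,h}$ established in~\eqref{defnormhO}. Your additional remark that $u - u_h \in H^1(\Omega)$ (so the broken seminorm coincides with the true one and the Poincar\'e--Friedrichs step applies) is a correct and slightly more careful justification of the step the paper leaves implicit.
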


\begin{remark}
For the sake of simplicity, we chose the definition \eqref{defspace2} for the local VE discretization space $\VKp$, which yields a space $V_h$ for which the projectors $\Pi^{0,k}_K$ are not computable, whence the need of introducing the somewhat cumbersome definition of the projector $\Prhs$ for computing the action of the source term on VE functions. With a similar approach similar to the one used for defining $\Vfp$, it is however possible to define $\VKp$, in such a way that $\Pi^{0,k}_K$ is computable for all $k \geq 1$. For the resulting enhanced space $V_h$, we can then set $\Prhs$ to be the $L^2$ projection  onto $\Poly{k}(\Tess)$. In view of the results in \cite{3DVEM}, our analysis holds unchanged, and  also for such a definition of the VE discretization space and right hand side, Theorem \ref{thm:BH} and Corollary \ref{cor:errorH1} hold.
	\end{remark}

\subsection{\bf Obtaining Nitsche's method from BH--VEM}

In the finite element framework
it can be shown (see \cite{Stenberg.1995}) that, by suitably choosing $\Lambda_h$, and eliminating the multiplier $\lambda_h$ by static condensation, the popular Nitsche's method for imposing Dirichlet boundary conditions can be retrieved. A similar thing also happens in the virtual elements framework. Indeed, setting $v_h=0$ in (\ref{BH_VEM}), we get the equation 
	\begin{gather*}
 \int_\Gamma\mu_h\, u_h  - 
\alpha \sum_{\face \in \Fb} \hKf  \int_\face  \left( \lambda_h +\dn \Pn(u_h) \right)  \mu_h = 
\int_{\Gamma} g\, \mu_h.
\end{gather*}
This equation can be solved for $\lambda_h$, face by face, and, setting $\gamma = \alpha^{-1}$, we obtain
\begin{equation}\label{eq_lambda}
\lambda_h = \gamma  \hKf^{-1} 
\Pib(u_h - g) - \dn \Pn(u_h).
\end{equation}

If we substitute \eqref{eq_lambda} in \eqref{BH_VEM}, and set $\mu_h=0$ we then get
\begin{multline*}
	a_h(u_h,v_h) + \gamma\sum_{\face \in \Fb}\hKf^{-1}  \int_\face \Pib(u_h)\, v_h  - \sum_{\face \in \Fb} \int_\face \dn \Pn(u_h)\, v_h  - \sum_{\face \in \Fb} \int_\face \Pib(u_h) \, \dn \Pn(v_h) \\
	= 
	\int_{\PolyDom} f_h v_h + \gamma\sum_{\face \in \Fb}\hKf^{-1} \int_\face g_h v_h - \sum_{\face  \in \Fb} \int_\face  g_h \dn \Pn(v_h),
\end{multline*}
where $g_h = \Pib (g)$.
As $\dn \Pn (v_h)$ is a polynomial of degree $k-1\leq k'$ on each face $\face\in \Fb$, using  the definition of $\Pib$ allows us to rewrite the above equation in a form that underlines its symmetry,  namely
\begin{multline}\label{nitsche}
a_h(u_h,v_h) - \sum_{\face \in \Fb} \int_\face \dn \Pn(u_h)\, v_h  - \sum_{\face \in \Fb} \int_\face u_h \, \dn \Pn(v_h)  + \gamma\sum_{\face \in \Fb}\hKf^{-1}  \int_\face \Pib(u_h) \Pib(v_h)   \\
= 
\int_{\PolyDom}  f_h v_h + \gamma\sum_{\face \in \Fb}\hKf^{-1} \int_\face g_h v_h - \sum_{\face  \in \Fb} \int_\face  g_h \dn \Pn(v_h). 
\end{multline}
Conversely, if $u_h$ satisfies \eqref{nitsche} and $\lambda_h$ is defined by \eqref{eq_lambda}, $(u_h,\lambda_h)$ satisfies \eqref{BH_VEM}. Therefore the analysis  obtained in the previous section for the Barbosa-Hughes problem (\ref{BH_VEM}) can also be extended to the Nitsche VEM problem (\ref{nitsche}), as stated by the following corollary of Theorem \ref{thm:BH}, where, once again, we use \eqref{defnormhO} to bound the $H^1(\Omega)$ norm by the mesh dependent norm $\| \cdot \|_{1,h}$.

\begin{corollary}\label{cor:nitsche} There exists $\gamma_0$ such that if $\gamma > \gamma_0$,  Problem \eqref{nitsche} admits a unique solution $u_h$ satisfying the following error estimate: if $u \in H^{k+1}(\Omega)$ then 
	\[
	\| u - u_h \|_{1,\Omega}  \lesssim h^k | u |_{k+1,\Omega}.
	\]
	\end{corollary}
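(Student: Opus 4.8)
The plan is to derive Corollary \ref{cor:nitsche} as a direct consequence of Theorem \ref{thm:BH} by exploiting the exact equivalence between the Nitsche formulation \eqref{nitsche} and the Barbosa--Hughes problem \eqref{BH_VEM} that has already been established in the surrounding text. The key observation, stated just before the corollary, is that $u_h$ solves \eqref{nitsche} if and only if the pair $(u_h,\lambda_h)$, with $\lambda_h$ defined by the static-condensation formula \eqref{eq_lambda}, solves \eqref{BH_VEM}. Since Theorem \ref{thm:BH} asserts existence, uniqueness, and the error estimate for \eqref{BH_VEM} whenever $\alpha < \alpha_0$, and since we have set $\gamma = \alpha^{-1}$, the threshold $\gamma_0 := \alpha_0^{-1}$ is the natural candidate, and the regime $\gamma > \gamma_0$ corresponds exactly to $\alpha < \alpha_0$.

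First I would invoke the equivalence to transfer existence and uniqueness: given $\gamma > \gamma_0$, Theorem \ref{thm:BH} produces a unique solution $(u_h,\lambda_h)$ of \eqref{BH_VEM}; its first component $u_h$ solves \eqref{nitsche} by the forward implication, and uniqueness of the Nitsche solution follows from the converse implication, since any Nitsche solution lifts (via \eqref{eq_lambda}) to a solution of \eqref{BH_VEM}, which is unique. Second, I would transfer the error estimate. Theorem \ref{thm:BH} gives
\[
\| u - u_h \|_{1,h} + \| \lambda - \lambda_h \|_{-1/2,h} \lesssim h^k | u |_{k+1,\Omega},
\]
and in particular $\| u - u_h \|_{1,h} \lesssim h^k | u |_{k+1,\Omega}$ for the very same $u_h$. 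Finally, applying the norm equivalence \eqref{defnormhO}, which bounds the genuine $H^1(\Omega)$ norm by the mesh-dependent norm $\| \cdot \|_{1,h}$, yields $\| u - u_h \|_{1,\Omega} \lesssim \| u - u_h \|_{1,h} \lesssim h^k | u |_{k+1,\Omega}$, which is the claimed estimate.

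I do not expect any genuine obstacle here, since all the substantive work—the stability and continuity bounds of Lemma \ref{prop:3.1} and the consistency/approximation arguments of Theorem \ref{thm:BH}—has already been carried out, and the corollary is essentially a repackaging. The only point requiring minor care is bookkeeping: one must verify that the multiplier $\lambda_h$ reconstructed from a Nitsche solution via \eqref{eq_lambda} indeed reproduces the discrete source data on the right-hand side of \eqref{BH_VEM} (here the identity $g_h = \Pib(g)$ and the fact that $\dn \Pn(v_h)$ is a polynomial of degree $\le k'$ on each face, so that $\Pib$ can be freely inserted, are exactly what make the substitution exact rather than approximate). Once this is checked, the equivalence is a genuine algebraic identity rather than an approximation, so no new constants are introduced and the convergence rate is inherited verbatim.
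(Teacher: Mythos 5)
Your proposal is correct and follows essentially the same route as the paper: the text preceding Corollary \ref{cor:nitsche} establishes exactly the equivalence you describe (a solution of \eqref{nitsche} together with the multiplier reconstructed via \eqref{eq_lambda} solves \eqref{BH_VEM}, and conversely), and the paper likewise obtains the corollary by transferring Theorem \ref{thm:BH} through this equivalence with $\gamma=\alpha^{-1}$ and then invoking \eqref{defnormhO} to pass from $\|\cdot\|_{1,h}$ to $\|\cdot\|_{1,\Omega}$. Your added care about $\dn\Pn(v_h)$ being a polynomial of degree at most $k'$ on each face, so that $\Pib$ can be inserted exactly, is precisely the point the paper uses to make the condensation an algebraic identity.
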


	\begin{remark} Observe that,  in two dimensions, for $k'=k$, we have that, for $u_h \in \Vh$, $\Pib(u_h) = u_h$. Then, \eqref{nitsche} can be rewritten as
	\begin{multline}\label{nitsche2d}
	a_h(u_h,v_h) - \sum_{\face \in \Fb} \int_\face \dn \Pn(u_h)\, v_h  - \sum_{\face \in \Fb} \int_\face u_h \, \dn \Pn(v_h)  + \gamma\sum_{\face \in \Fb}\hKf^{-1}  \int_\face u_h v_h   \\
	= 
	\int_{\PolyDom} f_h v_h + \gamma\sum_{\face \in \Fb}\hKf^{-1} \int_\face g_h v_h - \sum_{\face  \in \Fb} \int_\face  g_h \dn \Pn(v_h),
	\end{multline}	 
which is the formulation originally proposed and analyzed in \cite{VEM_curvo}.
		\end{remark}

\begin{remark} In three dimensions, 
	the formulation \eqref{nitsche} of Nitsche's method for the VEM,  obtained from  \eqref{BH_VEM} by static condensation of the multiplier $\lambda_h$, coincides with the formulation that we would obtain by adapting the standard Nitsche's method to the VEM, by replacing the non computable terms with computable terms involving the projection $\Pinabla$, in the spirit of the virtual element method. 
	\end{remark}


\renewcommand{\epsilon}{\varepsilon}
\newcommand{\dnh}{\partial_{\nu_h}}
\newcommand{\tg}{g^\star}

\newcommand{\nuhx}{\nu_h}
\newcommand{\sx}{\sigma}

\newcommand{\supd}{\delta_h}

\section{Weakly imposing boundary conditions on domains with curved boundary}\label{sec:curvo}


Let us now consider the solution of the same model problem
\begin{equation}\label{prob_mod}
	-\Delta u = f, \text{ in }\Omega, \qquad u = g, \text{ on }\partial \Omega,
\end{equation}
with, once again, $f \in L^2(\Omega)$, $g \in H^{1/2}(\partial \Omega)$, where now $\Omega \subseteq \mathbb{R}^d$, $d=2,3$, is a bounded domain with  $C^1$ boundary $\Gamma = \partial\Omega$. 
In order to solve such a problem by the Virtual Element method, 
we assume that $\Omega$ is approximated by a family of polygonal/polyhedral  domains $\Omh$, $0 < h \leq 1$,  each endowed with a quasi uniform tessellation $\Th$  into 
polyhedrals $K$ with diameter $h_K \simeq h$, and we follow the strategy proposed in \cite{BDT}, and already applied to the virtual element method in the framework of Nitsche's method in two dimensions (\cite{VEM_curvo}). The idea is to solve a discrete problem on $\Oh$, satisfying, on $\partial\Oh$ a modified boundary condition that takes  into account that the boundary data is given on $\partial \Omega$ rather than on $\partial \Oh$. Once again we will focus on the more complex three dimensional case, but the results obtained also hold in two dimensions with minor modifications.

\

Consider then a family $\{\Oh \}_h$, with $\Oh\subset \Omega$, of polyhedral domains approximating $\Omega$ and let $\Th$ denote a quasi uniform tessellation of $\Oh$ in polyhedral elements. We assume that the family $\{\Tess\}_h$ thus obtained satisfies Assumption \ref{shape_regular}. To avoid pathological situations we also assume that the measure of $\partial\Oh$ does not explode as $h$ goes to $0$, that is, that $| \partial \Oh | \lesssim | \Gamma|$. We let $\Fb$ denote the set of faces lying on the boundary $\Gamma_h = \partial\Oh$ of the approximating domain. We let $\n_h$ denote the outer unit normal to $\Gamma_h$. On $\Gamma_h$ we also choose an outward direction $\sigma$, not necessarily normal to $\Gamma_h$, which we assume to be constant on each face $\face$, and for $u \in C^m(\bar K)$, we let $\partial_\sigma^m$ denote the $m$-th derivative in the direction $\sigma$.
For 
$x \in \Gamma_h$ we let $\delta(x) > 0$ denote the smallest non negative scalar such that 
\[
x + \delta(x) \sx(x) \in \partial \Omega.\]

We recall that, for smooth convex domains in two dimensions, provided that the vertices of  $\Omega_h$ are placed on $\Gamma$, and choosing $\sigma = \nu_h$, for $x \in \partial K \cap \Gh$ we have that $\delta(x) \lesssim h_K^2$. We can not rely on a similar bound in three dimensions, as, in general, even for convex domains, building a polyhedral mesh with all the vertices on $\Gamma$ is not possible,
unless we either restrict ourselves to (possibly agglomerated) tetrahedral meshes, or we allow for curved  faces and, possibly, for curved edges. To overcome this issue, we need to make an assumption ensuring that the discrete boundary $\Gh$ is sufficiently close to the true boundary $\Gamma$. More precisely, we assume that for some  $\tau \in (0,1)$ sufficiently small, for all tessellations in the family $\{ \Th \}_h$ we have that
	\begin{equation}\label{deftau}
\max_{\face\in \Fb}\,	 \max_{x\in \face} \frac{\delta(x)}{\hKf} \leq \tau.
	\end{equation}
Of course, if the tessellations satisfy $\delta|_{\partial K \cap \Gamma_h} \lesssim h_K^2$, then for all $\tau$, there exists an $h_0$ such that for all $h < h_0$, \eqref{deftau} is satisfied. However, in three dimensions, it is desirable to allow  for situations where $\delta$ goes to $0$ only linearly in $h$, as this happens in many situations that are encountered in practice. 

\

\begin{remark}
	We want to point out from the start that  the assumption that the boundary $\Gamma$ of $\Omega$ is of class $C^1$ is, at the same time, too restrictive and too weak. Of course, as we will see in the following, optimal order $k$ convergence will require sufficient smoothness of the continuous solution, which can be deduced from the smoothness of the data only if the domain is sufficiently smooth, which is the typical situation that we have in mind in designing the method. However, the definition of the method itself and its theoretical analysis only require  that $\Gamma$ has enough regularity to ensure that the function $\delta$ is well defined on $\Gamma_h$ and that it satisfies $\delta \in L^{\infty}(\Gamma_h)$. We can easily see that $\Gamma$ being of class $C^1$ is not a necessary condition for this to happen.  On the other hand, a higher smoothness of the function $\delta$ (and therefore of the boundary $\Gamma$) will play a role in the practical implementation of the method, as we will see later on (see Remark \ref{rem:deltasmooth} in Section \ref{sec:expes}). 
	\end{remark}

%

 \begin{figure}
	\centering
	\includegraphics[height=4cm]{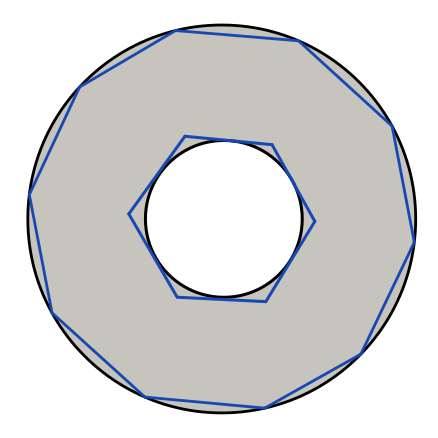}\qquad
		\includegraphics[height=4cm]{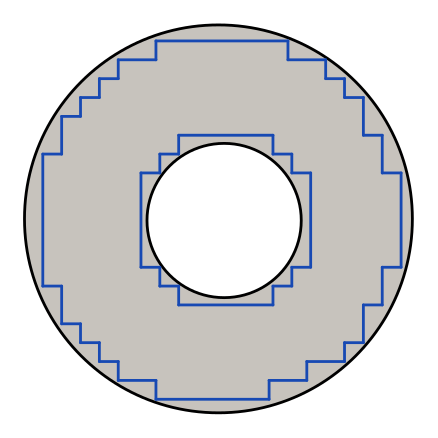}
			\includegraphics[height = 4cm]{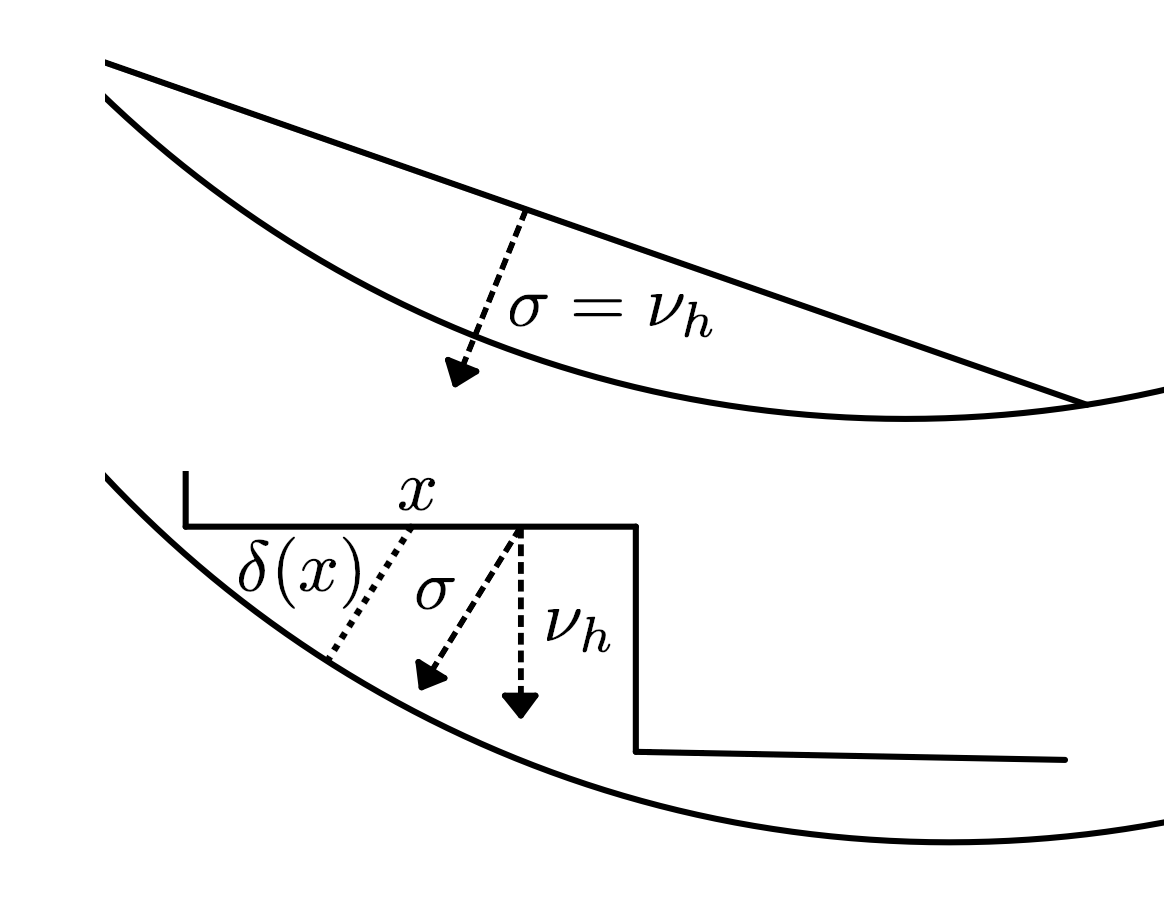}
				\caption{Two approximate domains $\Omega_h$ belonging to families falling in  our framework. For the domain on the left, the choice $\sigma = \nu_h$ yields $\delta(x)  \lesssim h^2$. For the choice in the center choosing 
$\sigma \neq \nu_h$, 
yields smaller values for $\delta(x)$. }\label{fig:sigma}
\end{figure}

\subsection{\bf VEM Barbosa-Hughes  for curved domains (BDT-BH-VEM)}

We write down a discretization of \eqref{prob_mod} on the approximate domain $\Oh$. 
The global norms $\| \cdot \|_{1/2,h}$, $\| \cdot \|_{-1/2,h}$ and $\| \cdot \|_{1,h}$, still given by \eqref{defnormhG} and \eqref{normvsnormh}, 
are now defined on $\Gamma_h$ and $\Oh$, as are the discrete spaces $V_h$, $\Poly{k}(\Tess)$ and $\Lambda_h = \Poly{k'}(\Fb)$, as well as the projectors $\Pinabla$, $\Prhs$ and $\Pib$. The bilinear form $a_h$ is also defined on $\Oh$, where
we consider the following discrete problem: 

\

\noindent {\em find $u_h \in V_h$, $\lambda_h\in \Lambda_h$ such that for all $v_h \in V_h$, $\mu_h\in \Lambda_h$  it holds that}
\begin{multline}\label{BDT_BH_VEM}
	a_h(u_h,v_h) + \int_{\Gamma_h} \lambda_h\, v_h + \int_{\Gamma_h}\mu_h\, u_h
	+ \sum_{\face\in \Fb} \sum_{j=1}^{k^*} \int_{\face } \frac{\delta^j}{j!} \partial^j_{\sigma}
	(\Pinabla u_h)\mu_h \\
	-\alpha \sum_{\face \in \Fb} \hKf \int_\face  \left( \lambda_h +\dn \Pn(u_h) \right)  \left( \mu_h +\dn \Pn(v_h) \right) 
	= \int_{\Oh} f_h\, v_h + \sum_{\face  \in \Fb} \int_\face  \widetilde g \, \mu_h ,
\end{multline}
where $\widetilde g$ is defined as
\[\widetilde g(x) = g(x+\delta(x) \sigma), \qquad x\in \Gamma_h,\] and
  where $k^* \leq k$ is a parameter whose choice we will discuss later on. Also in this case we can write the problem in a more compact form as: find $(u_h,\lambda_h) \in V_h \times \Lambda_h$ such that for all $(v_h,\mu_h) \in V_h \times \Lambda_h$ it holds that
  \[
  \ahBH(u_h,\lambda_h;v_h,\mu_h) + \BDT(u_h,\mu_h) = \int_{\Oh} f_h v_h + \sum_{\face  \in \Fb} \int_\face  \widetilde g\, \mu_h,
  \]
  	where, as before, $\ahBH$ is defined by \eqref{def:ahBH}
  		and where
  		\[  		
  		\BDT(u_h,\mu_h) = \sum_{\face\in \Fb} \sum_{j=1}^{k^*} \int_{\face } \frac{\delta^j}{j!} \partial^j_{\sigma}
  		(\Pinabla u_h)\mu_h.\]
  
 \begin{remark} We observe that, if we neglect the stabilization term, the equation on $\Gamma_h$ obtained by setting $v_h =0$ in \eqref{BDT_BH_VEM}, is an approximation, obtained by a Taylor expansion of order $k^*$, of
 \[
 u(x + \delta \sigma) = g(x + \delta \sigma), \qquad x \in \Gamma_h,
 \]
 which, as $x + \delta \sigma \in \Gamma$, is the correct boundary condition.
 	\end{remark}

\subsection{Analysis of the BDT-BH-VEM method}  We start by proving the following lemma.
\begin{lemma}\label{lem:coercivity_BDT_BH} There exists a constant $\CBDT$, depending on $k$, such that for all $u_h \in V_h$, $\mu_h \in \Lambda_h$ we have 
	\begin{equation}\label{contBDT}
\BDT(u_h,\mu_h) \leq \CBDT\, \tau | u_h |_{1,h}  \| \mu_h \|_{-1/2,h}.	\end{equation}
Moreover, there exists $\alpha_0 > 0$ such that for all $0 < \alpha < \alpha_0$, the following holds: there exists a constant $\tau_\alpha > 0$, depending on $\alpha$, such that if \eqref{deftau} holds for $\tau < \tau_\alpha$,  then for all $(u_h,\lambda_h) \in V_h \times \Lambda_h$
	\begin{equation}\label{coercivityBDT}
	\sup_{(v_h,\mu_h) \in   V_h \times \Lambda_h} \frac{ \ahBH(u_h,\lambda_h;v_h,\mu_h) + \BDT(u_h,\mu_h) }{
		\| v_h \|_{1,*} + \| \lambda_h \|_{-1/2,h}
		} \gtrsim 1,
	\end{equation}
	the implicit constant in the inequality depending on $\alpha$ and $\tau$.
\end{lemma}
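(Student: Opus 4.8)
The plan is to prove the two statements of Lemma~\ref{lem:coercivity_BDT_BH} separately, starting with the continuity bound \eqref{contBDT} and then perturbing the inf-sup stability \eqref{stabBH} of the unperturbed form $\ahBH$ established in Lemma~\ref{prop:3.1}. For the continuity estimate, I would fix a face $\face \in \Fb$ and a term $j$ in the sum defining $\BDT(u_h,\mu_h)$. Since $\Pinabla u_h$ restricted to $K$ is a polynomial of degree at most $k$, the directional derivative $\partial_\sigma^j(\Pinabla u_h)$ is a polynomial of degree at most $k-j$, to which the polynomial inverse inequalities \eqref{inversebase} and the polynomial trace inequality \eqref{tracePoly} apply. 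The key point is that the factor $\delta^j/j!$ is controlled on $\face$ by $(\tau\hKf)^j/j!$ using assumption \eqref{deftau}, so each term scales like $\tau^j \hKf^{j}$ times a $j$-th order derivative, which by the inverse inequality is again comparable to $\hKf^{-j}$ times the $H^1$-type quantity $|\Pinabla u_h|_{1,\face}$ (up to trace and scaling factors $\hKf^{-1/2}$). After pairing against $\mu_h$ via Cauchy--Schwarz and recognizing the weights $\hKf$ defining $\|\cdot\|_{-1/2,h}$, summing over $\face$ and using $|\Pinabla u_h|_{1,h} \lesssim |u_h|_{1,h}$ (as in \eqref{bounddn}), I expect to obtain a bound of the form $\left(\sum_{j=1}^{k^*} \tau^j C_j\right) |u_h|_{1,h}\,\|\mu_h\|_{-1/2,h}$; since $\tau \in (0,1)$ this geometric-type sum is $\lesssim \tau$, yielding \eqref{contBDT} with $\CBDT$ depending on $k$ through the finitely many inverse-inequality constants.

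For the inf-sup bound \eqref{coercivityBDT}, the strategy is a standard perturbation argument: the perturbed form is $\ahBH + \BDT$, and I already know from \eqref{stabBH} that $\ahBH$ alone is inf-sup stable for $\alpha < \alpha_0$. Given $(u_h,\lambda_h)$, I would choose the same test pair $(v_h,\mu_h) = (u_h, \bar\mu_h - \lambda_h)$ that realized stability in the proof of Lemma~\ref{prop:3.1}, for which $\ahBH(u_h,\lambda_h;v_h,\mu_h) \gtrsim \|u_h\|_{1,h}^2 + \|\lambda_h\|_{-1/2,h}^2$ with the test-pair norm comparable to $\|u_h\|_{1,h} + \|\lambda_h\|_{-1/2,h}$. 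The additional term is $\BDT(u_h,\mu_h) = \BDT(u_h, \bar\mu_h - \lambda_h)$, which by \eqref{contBDT} is bounded in absolute value by $\CBDT\,\tau\, |u_h|_{1,h}\,\|\bar\mu_h - \lambda_h\|_{-1/2,h}$. Since $\|\bar\mu_h\|_{-1/2,h} = \|\Pib u_h\|_{1/2,h} \lesssim \|u_h\|_{1,h}$, this perturbation is controlled by $\tau$ times a product of norms comparable to $\left(\|u_h\|_{1,h} + \|\lambda_h\|_{-1/2,h}\right)^2$. Absorbing it into the coercive lower bound, I would choose $\tau_\alpha$ small enough (depending on $\alpha$ through the implicit constants in \eqref{stabBH} and on $\CBDT$) so that the perturbation consumes at most, say, half of the positive quadratic terms, leaving $\ahBH + \BDT \gtrsim \|u_h\|_{1,h}^2 + \|\lambda_h\|_{-1/2,h}^2$. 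Dividing by the denominator $\|v_h\|_{1,*} + \|\lambda_h\|_{-1/2,h}$ then yields \eqref{coercivityBDT}.

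The main obstacle I anticipate is twofold. First, there is a notational/normalization mismatch: the denominator in \eqref{coercivityBDT} uses $\|v_h\|_{1,*}$ rather than $\|v_h\|_{1,h}$, so I must either confirm these norms are equivalent in this curved-domain setting or track the new $\|\cdot\|_{1,*}$ norm carefully through the chosen test functions; getting the supremum into the stated homogeneous form (the right-hand side being $\gtrsim 1$ rather than $\gtrsim \|u_h\|_{1,*}+\|\lambda_h\|_{-1/2,h}$) suggests an implicit normalization of $(u_h,\lambda_h)$ that I would make explicit. Second, the genuinely delicate quantitative point is the interplay between $\alpha$ and $\tau$: the coercivity constant coming from \eqref{stabBH} degrades as $\alpha \to \alpha_0$ or $\alpha \to 0$, so the admissible threshold $\tau_\alpha$ must be defined as a function of $\alpha$ in such a way that the perturbation term, whose size is $\CBDT\,\tau$ times order-one norm products, remains strictly dominated by the coercivity floor. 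Making this dependence clean — so that the final implicit constant in \eqref{coercivityBDT} depends only on $\alpha$ and $\tau$ as claimed — is the step requiring the most care, though it reduces to bookkeeping once the two estimates are in hand.
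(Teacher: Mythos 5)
Your proposal is correct and follows essentially the same route as the paper: the continuity bound is obtained exactly as in the paper's proof (writing $\partial_\sigma^j$ of the polynomial $\Pinabla u_h$ in terms of order-$j$ mixed derivatives, applying the polynomial trace and inverse inequalities with $\delta^j\lesssim(\tau\hKf)^j$, and summing over $j$ using $\tau^j\le\tau$), and the stability bound uses the identical test pair $(u_h,\bar\mu_h-\lambda_h)$ from Lemma~\ref{prop:3.1} with the $\BDT$ perturbation absorbed for $\tau$ small, the paper merely carrying out the absorption with explicit constants to arrive at the threshold $\tau_\alpha=(\sqrt{\alpha}-\alpha)/\CBDT$. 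The normalization and $\|\cdot\|_{1,*}$ versus $\|\cdot\|_{1,h}$ issues you flag are indeed notational artifacts of the statement rather than gaps in your argument.
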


\begin{proof}
			We start by observing  that, for $\sigma \in \mathbb{R}^d$ fixed unit vector, the higher order directional derivative $\partial^j_\sigma p$ is a linear combination of  mixed derivatives of order exactly $j$, that is, there exist real coefficients $c_\beta$ independent of $\sigma$ and $p$ such that
			\[
			\partial^j_\sigma p = \sum_{\beta \in \mathbb{N}^d \atop | \beta | = j} c_\beta \sigma^\beta D^\beta p, \qquad \text{ with } \qquad\sigma^\beta = \prod_{i=1}d \sigma_i^{\beta_i}, \ D^\beta p =  \frac{\partial^{|\beta|} p }{\partial x_1^{\beta_1} \cdots  \partial x_d^{\beta_d}}, \ \text{ and }\ | \beta| = \sum_{i=1}^d 
			\beta_i
			\]  (this can be easily proven by induction on $j$). This implies  that
			\begin{equation*}
			\|  \partial^j_\sigma p  \|^2_{0,\face} \lesssim \sum_{| \beta| = j} \| D^\beta p \|^2_{0,\face}, \end{equation*}
			the implicit constant in the inequality only depending on $j$. Then, as for $p \in \Poly{k}$, $D^\beta p \in \Poly{k}$, using  \eqref{tracePoly} and \eqref{inversebase}, as well as  \eqref{deftau}, we can write
			\begin{multline*}
			h_K^{-1} \sum_{\face\in \Fb\cap\FK}  \norm{ \delta^j \partial_\sigma^j p}^2_{0,\face} \leq 
			h_K^{-1} \sum_{\face\in \Fb\cap\FK}\| \delta \|^{2j}_{L^\infty(\face)}  \norm{\partial_\sigma^j p}^2_{0,\face}  \lesssim \tau^{2j} h_\el^{2j - 1} \sum_{|\beta|=j}\| D^\beta p \|^2_{0,\partial\el} \\
			\lesssim \tau^{2j} h_K^{2j-1} \sum_{|\beta|=j} \left(
			h_K^{-1} \| D^\beta p \|^2_{0,K} + h_K | D^\beta |^2_{1,K}
			\right)
			\lesssim \tau^{2j} \left(
			h_\el^{2j-2} |  p |^2_{j,K} + h_\el^{2j} |  p |^2_{j+1,K}
			\right)
			\lesssim \tau^{2j} | p |_{1,K}^2.
			\end{multline*}
			Summing up for $j = 1,\cdots,k^*$ we then have, for some constant $\wCinv$ depending on $k$ 
				\begin{equation}
				\hel^{-1} \sum_{\face\in \Fb \cap \FK} \sum_{j=1}^{k^*} \norm{\delta^j \partial_\sigma^j p}^2_{0,\face}
				\leq 
					\hel^{-1} \sum_{\face\in \Fb \cap \FK} \sum_{j=1}^{k} \norm{\delta^j \partial_\sigma^j p}^2_{0,\face}
				 \leq \wCinv \tau^{2j} \snorm{p}^2_{1,\el}.
				\end{equation}

Then, as  for $j \geq 1$ we have that $\tau^j \leq \tau$ (recall that, by assumption, $\tau \in (0,1)$), we can write
\begin{multline}\label{contC}
\BDT(u_h,\mu_h) 
= \sum_{\face\in \Fb} \sum_{j=1}^{k^*} \int_{\face } \frac{\delta^j}{j!} \partial^j_{\sigma}
(\Pinabla u_h)\mu_h \leq 
\sum_{\face\in \Fb}   
\| \mu_h \|_{0,\face} 
\sum_{j=1}^{k^*} 
\| \delta^j \partial^j_{\sigma}
(\Pinabla u_h)   \|_{0,\face}  
\\[2mm]
\leq  \sqrt{k^*}
\left(\sum_{\face\in \Fb}  \hKf  \| \mu_h \|^2_{0,\face} \right)^{1/2}
\left(\sum_{\face\in \Fb}   \hKf^{-1} \sum_{j=1}^{k^*}
  \|\delta^j \partial^j_{\sigma}
(\Pinabla u_h)   \|^2_{0,\face} \right)^{1/2}
\leq  \CBDT\tau | \Pn u_h |_{1,\Th} \| \mu_h \|_{-1/2,h},
\end{multline}
with $\CBDT\! = (k^* \wCinv)^{1/2}$, which gives us \eqref{contBDT}.

\

To prove \eqref{coercivityBDT},	let once again $(v_h,\mu_h)=(u_h,\bar \mu_h -\lambda_h)$. Combining \crefrange{coerc1}{partialbound} and \eqref{contC} we obtain
		\begin{multline}
\ahBH(u_h,\lambda_h;u_h,\bar\mu_h -\lambda_h) + \BDT(u_h,\bar \mu_h - \lambda_h) \\[2mm]
\geq 	
	(1-\alpha \Cinv^2) \snorm{\Pinabla u_h}^2_{1,*}  + c_* \snorm{(1-\Pinabla)u_h}^2_{1,*} + \alpha \| \lh \|^2_{-1/2,h} +
 \| \Pib (u_h) \|_{1/2,h}^2 \\[2mm] - (\alpha + \CBDT  \tau) \| \lambda_h \|_{-1/2,h} \| \Pib(u_h)
 \|_{1/2,h} - (\Cinv\alpha + \CBDT  \tau ) |  \Pn (u_h) |_{1,*} \| \Pib(u_h) \|_{1/2,h} \\[2mm]
 \geq 	(1-\frac 3 2 \alpha \Cinv^2) \snorm{\Pinabla u_h}^2_{1,*}  + c_* \snorm{(1-\Pinabla)u_h}^2_{1,*}  + \frac \alpha 2 \| \lh \|^2_{-1/2,h} +
 \| \Pib (u_h) \|_{1/2,h}^2  \\[2mm]
 - \frac 1{2 \alpha} (\alpha + \CBDT \tau)^2 \| \Pib (u_h) \|_{1/2,h}^2  + \frac 1 {2\alpha}  (\alpha + \frac \CBDT \Cinv \tau)^2 \| \Pib (u_h) \|_{1/2,h}^2\\[2mm]  \geq
 (1-\frac 3 2 \alpha \Cinv^2) \snorm{\Pinabla u_h}^2_{1,*}  + c_* \snorm{(1-\Pinabla)u_h}^2_{1,*} + \frac \alpha 2 \| \lh \|^2_{-1/2,h} \\[2mm] +
\Big(1- \frac 1{\alpha} (\alpha + \CBDT \tau)^2 \Big) \| \Pib (u_h) \|_{1/2,h}^2,
 \end{multline}
where we used that, as $\Cinv \geq 1$, it holds that $\alpha +  (\CBDT/\Cinv)  \tau \leq \alpha + \CBDT \tau$. Provided $\alpha < \alpha_0 = 2/(3 \Cinv^2)$, the coefficient in front of $| \Pn u_h |_{1,*}^2$ is positive. Given $\alpha$ and letting $C(\alpha,\tau)$ denote the coefficient in front of $\| \Pib (u_h) \|_{1/2,h}^2$, we have that $C(\alpha,\tau)$ is positive, provided $\tau$ is such that 
$(\alpha + \CBDT \tau)^2 < \alpha$. It is not difficult to check that such an inequality is satisfied provided  $\CBDT\tau \in ( - \alpha - \sqrt{\alpha}, -\alpha + \sqrt{\alpha})$. As $\alpha \leq \alpha_0 < 1$, $-\alpha + \sqrt{\alpha}$ is strictly positive. Then, setting $\tau_\alpha = (\sqrt{\alpha}-\alpha)/\CBDT$\!, for $\tau < \tau_\alpha$ we have that 
$C(\alpha,\tau) > 0$,  and we have proven our thesis.
	\end{proof}

	We are then able to prove the following result:
	\smallskip
	
	\begin{theorem} \label{theo:error_BH}
		 There exists $\alpha_0 > 0$ such that for all $\alpha$ with $0 < \alpha < \alpha_0$, the following holds: there exists a constant $\tau_\alpha$, depending on $\alpha$, such that if \eqref{deftau} holds for $\tau  < \tau_\alpha$, Problem \eqref{BDT_BH_VEM} is well posed, and,
		if  $u\in H^{k+1}(\Omega)\cap W^{k^*+1,\infty}(\Omega)$, the following error estimate holds
	\[
	\| u - u_h \|_{1,h} + \| \partial_{\n_h} u - \lambda_h \|_{-1/2,h} \lesssim h^{k} | u |_{k+1,\Omega} + h^{-1/2} \delta^{k^* + 1} | u |_{k^*+1,\infty,\Omega}.
	\]
	\end{theorem}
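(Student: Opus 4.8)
The plan is to combine the stability estimate of Lemma~\ref{lem:coercivity_BDT_BH} with a quasi-optimality argument in the spirit of Theorem~\ref{thm:BH}, accounting for the extra geometric error introduced by the boundary correction term $\BDT$ and by the replacement of $g$ on $\Gamma$ by $\widetilde g$ on $\Gamma_h$. First I would establish well-posedness: given $\alpha < \alpha_0$ and $\tau < \tau_\alpha$, the inf-sup bound \eqref{coercivityBDT} together with the continuity of $\ahBH + \BDT$ (combining \eqref{contBH} and \eqref{contBDT}) yields existence and uniqueness by the standard argument already invoked in Theorem~\ref{thm:BH}. The implicit constant now depends on $\alpha$ and $\tau$, but this is harmless.

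For the error estimate I would again split the error through computable interpolants, setting $\uI \in V_h$ the VEM interpolant of $u$, $u_\pi = \Pi^0(u)$, and $\lI = \Pib(\partial_{\n_h} u)$, so that it suffices to bound $\|\uh - \uI\|_{1,h} + \|\lh - \lI\|_{-1/2,h}$ via \eqref{coercivityBDT}. I would pick $(\vh,\mu_h)$ realizing the supremum, normalized as in \eqref{normalization}, and expand $\ahBH(\uh-\uI,\lh-\lI;\vh,\mu_h) + \BDT(\uh-\uI,\mu_h)$ by adding and subtracting $u_\pi$, $u$, $\lambda = \partial_{\n_h} u$, just as in Theorem~\ref{thm:BH}. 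The novelty is that the consistency equation is no longer \eqref{BabuskaCont1}--\eqref{BabuskaCont2} but the discrete problem \eqref{BDT_BH_VEM}, whose right-hand side involves $\widetilde g$. The key algebraic identity I would exploit is that, because $x + \delta(x)\sigma \in \Gamma$ and $u = g$ there, the exact solution satisfies $u(x+\delta\sigma) = \widetilde g(x)$ on $\Gamma_h$; hence the quantity $\widetilde g - \sum_{j=0}^{k^*} \tfrac{\delta^j}{j!}\partial_\sigma^j(\Pinabla u)$ is precisely the Taylor remainder plus a polynomial-projection defect, and this is what controls the geometric part of the error.

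The terms $A$ through $G$ reappear essentially unchanged and are bounded as in \crefrange{boundABC}{boundG}, contributing the $h^k |u|_{k+1,\Omega}$ part. The genuinely new contribution comes from $\BDT(\uh-\uI,\mu_h)$ together with the discrepancy between the $\widetilde g$ data term and the Taylor-corrected trace of $u$. I would control $\BDT(\uI - u_I, \mu_h)$ using \eqref{contBDT}, which already gives a factor $\tau$ times $\|\mu_h\|_{-1/2,h} \le 1$. For the consistency defect I would write the order-$k^*$ Taylor expansion of $u(x+\delta\sigma)$ about $x$ with integral remainder; the remainder is $O(\delta^{k^*+1} |u|_{k^*+1,\infty})$ pointwise, and tested against $\mu_h$ over $\Gamma_h$ in the $\|\cdot\|_{-1/2,h}$ duality \eqref{conth} it contributes $\bigl(\sum_\face \hKf^{-1} \|\delta^{k^*+1} \partial_\sigma^{k^*+1} u\|_{0,\face}^2\bigr)^{1/2} \lesssim h^{-1/2} \delta^{k^*+1} |u|_{k^*+1,\infty,\Omega}$, using $\hKf \simeq h$ and $|\Gamma_h| \lesssim |\Gamma|$; this accounts for the second term in the claimed bound. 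I would also need to replace $\partial_\sigma^j(\Pinabla u)$ by $\partial_\sigma^j u$, absorbing the projection error $\partial_\sigma^j(\Pinabla u - u)$ via \eqref{eq:approxdn}-type bounds into the $h^k|u|_{k+1,\Omega}$ term.

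The main obstacle I expect is the careful bookkeeping of the consistency step: unlike the polygonal case, the discrete bilinear form's data term encodes a truncated Taylor expansion of $\Pinabla u_h$ rather than of $u$, so I must simultaneously track the Taylor remainder of $u$ against $\Gamma$, the polynomial-projection error $\Pinabla u - u$ differentiated $j$ times along $\sigma$, and the VEM interpolation error, ensuring none of these degrades the optimal rate. In particular the scaling $h^{-1/2}\delta^{k^*+1}$ is delicate: it arises because the correction lives only on $\Gamma_h$ and the $\|\cdot\|_{-1/2,h}$ norm weights faces by $\hKf \simeq h$, so I must verify that summing over boundary faces does not lose powers of $h$ beyond the stated $h^{-1/2}$. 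Once these defect terms are shown to be bounded by $h^k|u|_{k+1,\Omega} + h^{-1/2}\delta^{k^*+1}|u|_{k^*+1,\infty,\Omega}$, a triangle inequality with the interpolation estimates \eqref{VEMapproxI} and \eqref{boundD} completes the proof.
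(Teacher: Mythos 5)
Your proposal is correct and follows essentially the same route as the paper: well-posedness from the inf-sup bound of Lemma \ref{lem:coercivity_BDT_BH}, then the error split through $\uI$, $u_\pi$, $\lI$ with the flat-boundary terms $A$--$G$ reappearing, plus exactly the three new contributions the paper calls $H$ (the $\BDT$ term on the interpolation error, via \eqref{contBDT}), $I$ (the defect $\partial_\sigma^j(u-\Pinabla u)$, which the paper handles by inserting $u_\pi$ and using trace plus \eqref{approxj} rather than an \eqref{eq:approxdn}-type bound, a cosmetic difference), and $J$ (the pointwise Taylor remainder summed over $\Fb$, yielding the $h^{-1/2}\delta^{k^*+1}$ term). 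No gaps of substance.
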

	
	\begin{proof} Let $\uI \in V_h$ and $u_\pi  \in \Poly{k}(\Tess)$ denote once again the  VE interpolant and the $L^2$ orthogonal projection of $u$,  and let this time $\lI\in \Lh$ denote $L^2(\Gamma_h)$ projection of $\partial_{\nu_h}u$. We observe that the solution $u$ of \eqref{prob_mod} satisfies, for all $v \in H^1(\Oh)$,
	\[
	\int_{\Oh} \nabla u \cdot \nabla v - \int_{\Gamma_h} \partial_{\n_h} u v = \int_{\Oh} f v.
	\]
	
Thanks to \eqref{coercivityBDT}, there exists $v_h \in V_h$ and $\mu_h \in \Lambda_h$ with $\| v_h \|_{1,h} + \| \mu_h \|_{-1/2,h} = 1$, such that we have:
		\begin{multline}
		\| \uh - \uI  \|_{1,h} + \| \lh - \lI \|_{-1/2,h}  \lesssim
		\ahBH(\uh - \uI,\lh - \lI;v_h,\mu_h) + \BDT(\uh - \uI,\mu_h) \\[3mm] =		
			A + B + C + D + E + F + G \\[1mm]
	+ \BDT(u - \uI,\mu_h) + \sum_{\face\in \Fb} \sum_{j=1}^{k^*} \int_{\face } \frac{\delta^j}{j!} \partial^j_{\sigma}
		( u - \Pinabla(u))\mu_h
		 + 
		\sum_{\face\in \Fb} \int_{\face } \Big( \widetilde g  - u - \sum_{j=1}^{k^*} \frac{\delta^j}{j!} \partial^j_{\sigma}
	u \Big)\mu_h
\\[1mm]	\lesssim h^k \| u \|_{k+1,\Oh} + H + I + J,
		\end{multline}
where $A$--$G$ are as in 	Section \ref{sec:BHVEM}, with  $\Omega$ and $\Gamma$ replaced by $\Oh$ and $\Gamma_h$, and are bounded as in \crefrange{boundABC}{boundG}. We then only need to bound $H$, $I$ and $J$. Using \eqref{contBDT} and \eqref{VEMapproxI}, we can write
\begin{equation}\label{boundH}
H \lesssim | u - u_I |_{1,\Oh} \lesssim h^k | u |_{k+1,\Oh} \leq h^k | u |_{k+1,\Omega}.
\end{equation}
As far as $I$ is concerned, adding and subtracting $u_\pi = \Pi^0(u)$, we can write
\begin{equation*}
 \sum_{\face\in \Fb} \sum_{j=1}^{k^*} \int_{\face }\int_{\face} \frac {\delta^j}{j!} \partial^j_\sigma(u - \Pn u) \mu_h \leq
 \sum_{\face\in \Fb} \sum_{j=1}^{k^*} \int_{\face }\int_{\face} \frac {\delta^j}{j!} \partial^j_\sigma(u - u_\pi) \mu_h  +  \BDT(u_\pi - u,\mu_h).
 	\end{equation*}
 On the one hand, by \eqref{contBDT}, we can bound (recall that $\| \mu_h \|_{-1/2,h} \leq 1$)
 	\[
\BDT(u_\pi - u,\mu_h)
 \lesssim | u_\pi - u |_{1,*} \lesssim h^k | u |_{k+1,\Oh}.
 	\]
On the other hand, we observe that, as $\delta|_\face \lesssim \hKf$, for $1 \leq j \leq k^*$ integer, using \eqref{approxj} we have that
\begin{multline}
\sum_{\face} \int_{\face} \frac {\delta^j}{j!} \partial^j_\sigma(u - u_\pi) \mu_h \lesssim \sum_{|\beta|=j}
 \sum_{\face} \hKf^j \|  D^\beta(u - u_\pi) \|_{0,\face} \| \mu_h \|_{0,f}\\ \lesssim 
\sum_{|\beta|=j} 	\Big( \sum_K h_K^{2j-1}\sum_{\face \in \Fb \cap \FK} \|  D^\beta(u - u_\pi) \|^2_{0,\face} \Big)^{1/2}  \Big(
 	\sum_K h_K  \sum_{\face\in \Fb \cap \FK} \| \mu_h \|_{0,\face}^2
 	\Big)^{1/2}\\
 	\lesssim \sum_{|\beta|=j}
	\Big(
	\sum_K h_K^{2j-1} \| D^\beta (u - u_\pi) \|_{0,\partial K}^2 
	\Big)^{1/2} 
	\| \mu_h \|_{-1/2,h} \lesssim \Big(\sum_K (h_K^{2j-2} | u - u_\pi |^2_{j,K} + h_K^{2j} | u-u_\pi |^2_{j+1,K} )  \Big)^{1/2} 
\\	\lesssim \left(\sum_K h_K^{2k} | u |_{k+1,K}^2 \right)^{1/2}\lesssim h^k | u |_{k+1,\Omega},
	\end{multline}
that, summing for $j$ integer, $1\leq j \leq k^*$	yields
	\begin{equation}
	\label{boundI} I \lesssim h^k | u |_{k+1,\Omega}.
	\end{equation}
 	
%
%
%

Moreover, using standard bounds on the Taylor expansion, for $x \in \Gh$ we have 
\[
| \widetilde g(x) - u - \sum_{j=1}^{k^*} \frac{\delta(x)^j}{j!} \partial^j_{\sigma}
u(x) | = | u(x+\delta(x) \sigma) - u(x) - \sum_{j=1}^{k^*} \frac{\delta(x)^j}{j!} \partial^j_{\sigma} u(x) | \lesssim \delta^{k^*+1} | u |_{k^*+1,\infty,\Omega},
\]
which, thanks to condition \eqref{deftau}, yields
\begin{multline*}
J \lesssim \Big( \sum_K \sum_{\face\in \Fb \cap \FK} \hKf^{-1} \|  \widetilde g - u - \sum_{j=1}^{k^*} \frac{\delta^j}{j!} \partial^j_{\sigma}
u  \|_{0,\face}^2 \Big)^{1/2}
\| 
\mu_h \|_{-1/2,h}\\ \lesssim 
\Big( \sum_{\face\in \Fb} \hKf^{-1} | \face |  \delta^{2 k^* + 2} |  u |_{k^*+1,\infty,\Omega}^2 \Big)^{1/2}
\lesssim
\tau^{1/2}| \Gamma_h |^{1/2} \delta^{k^* + 1/2} | u |_{k^*+1,\infty,\Omega}.
\end{multline*}

Then, as by assumption, $\tau < 1$, we have
\[
\| \uI - \uh \|_{1,h} + \| \lI - \lambda_h \|_{-1/2,h} \lesssim h^k | u  |_{k+1,\Omega} + \delta^{k^* + 1/2}  | u |_{k^*+1,\infty,\Omega}.
\]
		\end{proof}

				\begin{remark}
					Remark that if we replace $\hKf$ with $\hf$ in the definition of the discrete problem \eqref{BDT_BH_VEM}, the theoretical analysis of the method would require replacing the condition \eqref{deftau} with the much stronger condition
					\[
					\max_{\face\in \Fb}\,	 \max_{x\in \face} \frac{\delta(x)}{\hf} \leq \tau,		
					\] 
					which would drastically reduce the set of eligible tessellations.
				\end{remark}

Let us now consider the issue of the choice of the parameter $k^*$ appearing in the definition of the correction operator $\BDT$, which, of course, should be chosen as small as possible, in order to avoid unnecessary computational cost. As the order $j$ derivatives  in the definition of $\BDT$ always act on an order $k$ polynomial, we can safely choose $k^* \leq k$. The sensible criterion is, as usual, to balance the order of convergence of the two terms on the right hand side of the error bound in Theorem \ref{theo:error_BH}.  As already observed, there are situations where the approximating domains can be constructed in such a way that $\delta \lesssim h^2$. In such situation the error term resulting from the approximation of the domain is of order $h^{2 k^* + 3/2}$. We can then choose $k^*$ as the smallest integers such that $2k^* + 3/2 > k$, which gives us $k^* = \lceil k/2-3/4 \rceil$.  Remark that, by the Sobolev embedding theorem, for such choice of $k^*$ we have that $H^{k+1} (\Omega) \subset W^{k^*+1,\infty} (\bar \Omega)$, provided, if $d=3$, that $k > 1$. 
We then have the following corollary
\begin{corollary}
	Assume that the approximating subdomains $\Oh$ are such that $\delta \simeq h^2$, and, for $d=3$, that $k > 1$. Then, setting $k^* = \lceil k/2-3/4 \rceil$, the following error  bound holds:
	\[
\| u - u_h \|_{1,\Oh} \lesssim h^k | u |_{k+1,\Omega}.
	\] 
	\end{corollary}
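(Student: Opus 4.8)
<br>

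The plan is to specialize Theorem~\ref{theo:error_BH} to the hypotheses of the corollary and then verify that the second error term is dominated by the first. First I would check that the regularity assumptions of the theorem are met: the corollary assumes $u \in H^{k+1}(\Omega)$, and I need to confirm that this also gives $u \in W^{k^*+1,\infty}(\Omega)$. This is precisely the content of the remark preceding the corollary: by the Sobolev embedding theorem, with $k^* = \lceil k/2 - 3/4 \rceil$ one verifies that $k^* + 1 \leq k+1 - d/2$ (using $\delta \simeq h^2$ to fix $k^*$, and the extra hypothesis $k>1$ when $d=3$), so that $H^{k+1}(\Omega) \hookrightarrow W^{k^*+1,\infty}(\bar\Omega)$. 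With this embedding in hand, the term $|u|_{k^*+1,\infty,\Omega}$ appearing in the theorem is finite and controlled by $|u|_{k+1,\Omega}$.

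Next I would plug the assumption $\delta \simeq h^2$ into the error estimate from Theorem~\ref{theo:error_BH}, namely
\[
\| u - u_h \|_{1,h} + \| \partial_{\n_h} u - \lambda_h \|_{-1/2,h} \lesssim h^{k} | u |_{k+1,\Omega} + h^{-1/2} \delta^{k^* + 1} | u |_{k^*+1,\infty,\Omega}.
\]
Substituting $\delta \simeq h^2$ turns the second term into $h^{-1/2} h^{2(k^*+1)} |u|_{k^*+1,\infty,\Omega} = h^{2k^* + 3/2} |u|_{k^*+1,\infty,\Omega}$. The defining property of $k^*$, namely that it is the smallest integer with $2k^* + 3/2 > k$, guarantees $h^{2k^*+3/2} \lesssim h^{k}$, so the domain-approximation term is absorbed into the consistency term. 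Combining this with the Sobolev control of $|u|_{k^*+1,\infty,\Omega}$ by $|u|_{k+1,\Omega}$ yields $\|u - u_h\|_{1,h} \lesssim h^k |u|_{k+1,\Omega}$.

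Finally, I would upgrade the mesh-dependent norm $\|\cdot\|_{1,h}$ to the full $H^1(\Oh)$ norm. This is exactly the role of \eqref{defnormhO} (now read on $\Oh$ and $\Gamma_h$), which gives $\| u - u_h \|_{1,\Oh} \lesssim \| u - u_h \|_{1,h}$, completing the bound. I do not anticipate a genuine obstacle here, since the corollary is essentially a bookkeeping specialization; the only point requiring care is the verification of the Sobolev embedding and the exponent arithmetic for $k^*$, together with the dimension-dependent caveat $k>1$ when $d=3$ (needed so that $k^*+1 - d/2 \geq 0$ fails to force $k^*$ below the admissible range). The well-posedness of \eqref{BDT_BH_VEM} under $\tau < \tau_\alpha$ is already furnished by the theorem and needs only to be invoked.
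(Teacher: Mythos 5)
Your argument is exactly the paper's: the corollary is obtained by substituting $\delta \simeq h^2$ into the estimate of Theorem~\ref{theo:error_BH}, noting that $2k^*+3/2 > k$ by the choice of $k^*$ so the boundary-approximation term is absorbed into $h^k$, invoking the Sobolev embedding $H^{k+1}(\Omega)\subset W^{k^*+1,\infty}(\bar\Omega)$ to control $|u|_{k^*+1,\infty,\Omega}$, and passing from $\|\cdot\|_{1,h}$ to the $H^1(\Omega_h)$ norm via \eqref{defnormhO}. The one caveat is that your embedding criterion ``$k^*+1\le k+1-d/2$'' should be a strict inequality and, as written, actually fails in the borderline case $d=3$, $k=2$ (where $k^*=1$ and one would need $H^3\subset W^{2,\infty}$); this is, however, glossed over in the paper's own remark under the same hypothesis $k>1$, so your proposal faithfully reproduces the paper's reasoning.
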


\begin{remark}
		For the approximation of non convex domains it would be desirable to  give up the requirement that $\Omega_h \subset \Omega$, which would allow for smaller values for $\delta$. In order to define the discrete method with $\Oh \not \subset \Omega$, we would however need to construct an extension $\widetilde f$ of the load term $f$ to $\Omega_h\setminus \Omega$.
		In the two dimensional finite element case, this last strategy for adapting the method to non convex domains has been analyzed in \cite{Dupont1974}, that states a bound on the error of order $h^k +  h^{-1/2} \delta^2$. To obtain an optimal convergence rate we would then need to require that $\delta \lesssim h^{k/2+1/4}$, which, as $k$ increases, is a much less favorable condition than \eqref{deftau} with $\tau < \tau_\alpha$.
		This lack of optimality in the high order case is to be expected, as the resulting method shares some characteristics with fictitious domain methods, which generally suffer from an upper bound on the order of convergence, unless the ``right'' extension of $f$ or some additional information on the solution are retrieved, which adds a non negligible computational overhead.  
	\end{remark}

	\subsection{Nitsche's method with curved boundaries}

	Also for the discrete formulation \eqref{BDT_BH_VEM} we can eliminate the multiplier to obtain a Nitsche's type formulation in the sole unknown $u_h$. More precisely, with $k'=k$, setting $v_h = 0$ in \eqref{BDT_BH_VEM}, we see that $\lambda_h$ satisfies, for all $\mu_h \in \Lambda_h$,
\begin{gather*}
\int_\face \lambda_h   \mu_h
= \gamma \hKf^{-1} \int_{\face}\mu_h u_h + \gamma \hKf^{-1} \sum_{j=1}^{k^*} \int_{\face } \frac{\delta^j}{j!} \partial^j_{\sigma}
\Pinabla (u_h)\mu_h -   \int_\face  \dn \Pn(u_h)   \mu_h 
- \gamma \hKf^{-1} \int_\face  \widetilde g_h \mu_h ,
\end{gather*}
whence, since by definition, $\Pinabla (u_h) \in \Poly{k}(\Tess)$, we easily see that
\begin{multline}\label{eq:u2lambda}
\lambda_h |_\face = \gamma \hKf^{-1}\Pib  \left(u_h + \sum_{j=1}^{k^*}  \frac{\delta^j}{j!} \partial^j_{\sigma}
\Pinabla (u_h) - \widetilde g_h -  \dn \Pn(u_h)   \right) \\ =
 \gamma \hKf^{-1}\Pib  (u_h - \widetilde g_h ) + \gamma \hKf^{-1} \sum_{j=1}^{k^*} \frac{\delta^j}{j!} \partial^j_{\sigma}
 \Pinabla (u_h)  -  \dn \Pn(u_h).
\end{multline}
We can then once again substitute the expression on the right hand side of \eqref{eq:u2lambda} for $\lambda_h$ in \eqref{BDT_BH_VEM}, and, setting $\mu_h=0$, we obtain the following equation for $u_h$:
\begin{multline}\label{BDT_Nitsche_VEM}
 a_h(u_h,v_h) - \sum_{\face \in \Fb}  \int_\face \dn \Pn(u_h) v_h - \sum_{\face \in \Fb}  \int_\face   u_h  \dn \Pn(v_h)+ \gamma\sum_{\face \in \Fb} \hKf^{-1} \int_\face   \Pib  (u_h) \Pib(v_h )\\ - \sum_{\face \in \Fb}  \int_\face   \sum_{j=1}^{k^*} \frac{\delta^j}{j!} \partial^j_{\sigma}\Pinabla (u_h)  \left( \dn \Pn(v_h) - \gamma\hKf^{-1}  \Pib(v_h )\right)  
\\ = \int_{\Oh} f_h v_h   - \sum_{\face \in \Fb}  \int_\face \widetilde g_h  \left( \dn \Pn(v_h) - \gamma\hKf^{-1}  \Pib(v_h )\right),
\end{multline}
where, as $\Pinabla( u_h)$ and $\Pinabla (v_h)$ are piecewise polynomials, we once again used the definition of $\Pib$ to switch from $u_h$ and $v_h$ to $\Pib(u_h
)$ and $\Pib(v_h
)$ and vice versa, thus underlining the symmetry of some of the components appearing in the equation. The well posedness and error estimates for Problem \eqref{BDT_Nitsche_VEM}  are then consequences of the analogous results for Problem \eqref{BDT_BH_VEM}, as stated by the following corollary of Theorem \ref{theo:error_BH}.
\begin{corollary}\label{cor:nitsche:curved}
	 There exists $\gamma_0 > 0$ such that for all $\gamma > \gamma_0$, the following holds: there exists a constant $\tau_\gamma$, depending on $\gamma$, such that if \eqref{deftau} holds for $\tau  < \tau_\gamma$, Problem \eqref{BDT_Nitsche_VEM} is well posed, and,
	 if  $u\in H^{k+1}(\Omega)\cap W^{k^*+1,\infty}(\Omega)$, the following error estimate holds
	 \[
	 \| u - u_h \|_{1,\Oh}  \lesssim h^{k} | u |_{k+1,\Omega} + h^{-1/2} \delta^{k^* + 1} | u |_{k^*+1,\infty,\Omega}.
	 \]
	\end{corollary}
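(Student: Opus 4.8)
The plan is to derive Corollary \ref{cor:nitsche:curved} directly from Theorem \ref{theo:error_BH} by exploiting the exact equivalence between the Nitsche formulation \eqref{BDT_Nitsche_VEM} and the Barbosa--Hughes formulation \eqref{BDT_BH_VEM}, just as Corollary \ref{cor:nitsche} was obtained from Theorem \ref{thm:BH} in the flat case. The key structural fact, already established in the derivation preceding the statement, is that if $u_h$ solves \eqref{BDT_Nitsche_VEM} and one defines $\lambda_h$ face by face through \eqref{eq:u2lambda}, then the pair $(u_h,\lambda_h)$ solves \eqref{BDT_BH_VEM} with $\gamma = \alpha^{-1}$; conversely, any solution of \eqref{BDT_BH_VEM} yields, via static condensation, a solution of \eqref{BDT_Nitsche_VEM}. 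This bijection is linear, so well posedness transfers in both directions.

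First I would fix the correspondence of constants: setting $\gamma = \alpha^{-1}$, Theorem \ref{theo:error_BH} guarantees an $\alpha_0 > 0$ below which the BH problem is well posed, so I would set $\gamma_0 = \alpha_0^{-1}$ and $\tau_\gamma = \tau_{\alpha}$ with $\alpha = \gamma^{-1}$, ensuring that the condition $\gamma > \gamma_0$ is exactly equivalent to $0 < \alpha < \alpha_0$ and that \eqref{deftau} holds with the required threshold. Under these hypotheses Theorem \ref{theo:error_BH} provides a unique solution pair $(u_h,\lambda_h)$ of \eqref{BDT_BH_VEM} satisfying the stated error bound for both $\| u - u_h \|_{1,h}$ and $\| \partial_{\n_h} u - \lambda_h \|_{-1/2,h}$. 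By the equivalence, the $u_h$-component is precisely the unique solution of the Nitsche problem \eqref{BDT_Nitsche_VEM}, and uniqueness of $u_h$ forces uniqueness of the condensed solution.

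The error estimate is then immediate: the Theorem \ref{theo:error_BH} bound already controls $\| u - u_h \|_{1,h}$, and I would pass from the mesh-dependent norm $\| \cdot \|_{1,h}$ to the genuine $H^1(\Oh)$ norm by invoking \eqref{defnormhO} (now read on $\Oh$, $\Gamma_h$ as noted at the start of the curved section), exactly as in the flat-case Corollary \ref{cor:nitsche}. This yields
\[
\| u - u_h \|_{1,\Oh} \lesssim \| u - u_h \|_{1,h} \lesssim h^{k} | u |_{k+1,\Omega} + h^{-1/2} \delta^{k^* + 1} | u |_{k^*+1,\infty,\Omega},
\]
which is the claimed bound. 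Since the multiplier term $\| \partial_{\n_h} u - \lambda_h \|_{-1/2,h}$ is simply dropped from the left-hand side, no further work is needed on that component.

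I do not expect a genuine obstacle here, since the result is a corollary obtained by a transcription of an already-proved theorem through a linear algebraic equivalence; the only points requiring care are bookkeeping ones. Specifically, one must verify that the substitution of \eqref{eq:u2lambda} into \eqref{BDT_BH_VEM} reproduces \eqref{BDT_Nitsche_VEM} verbatim --- in particular that the step invoking the definition of $\Pib$ to interchange $u_h, v_h$ with $\Pib(u_h), \Pib(v_h)$ is licit, which it is because $\dn \Pn(v_h)$ and the directional derivatives $\partial^j_\sigma \Pinabla(v_h)$ are polynomials of degree at most $k-1 \le k'$ on each face, so testing against them sees only the $\Lambda_h$-projection. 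The mildest subtlety is that the corollary is stated for $k'=k$, which is exactly the regime in which the symmetrization via $\Pib$ in \eqref{BDT_Nitsche_VEM} is carried out, so the equivalence is clean and the transfer of both well posedness and the error estimate goes through without modification.
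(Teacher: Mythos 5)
Your proposal is correct and follows essentially the same route as the paper: the corollary is obtained directly from Theorem \ref{theo:error_BH} via the static-condensation equivalence between \eqref{BDT_Nitsche_VEM} and \eqref{BDT_BH_VEM} established just before the statement (with $\gamma=\alpha^{-1}$ and hence $\gamma_0=\alpha_0^{-1}$, $\tau_\gamma=\tau_{\alpha}$), followed by the passage from $\|\cdot\|_{1,h}$ to the $H^1(\Oh)$ norm via \eqref{defnormhO}, exactly as in the flat-boundary Corollary \ref{cor:nitsche}. Your additional bookkeeping on why the $\Pib$-interchange is licit for $k'=k$ matches the paper's own justification.
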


\section{Numerical Tests}\label{sec:expes}
In this section we present numerical experiments to test and validate the proposed methods on domains with curved boundaries. More specifically,   we aim at verifying the error bounds of Theorems \ref{thm:BH} and \ref{theo:error_BH}. and/or Corollaries~\ref{cor:nitsche} and \ref{cor:nitsche:curved}, as well as studying their dependence on the parameters  $\gamma$, and, for the curved domain case, on the order $k^*$ of the Taylor expansion, and the distance $\delta(x)$. As Nitsche's method and the Barbosa-Hughes method are equivalent, case by case we only report the result for one of the two. 
The calculation of the function $\delta(x)$ for a general curved domain can be cast as a rootfinding problem: letting the surface of the domain be represented by a nonlinear scalar equation $F = 0$, given $x\in\Gamma_h$,  and an outward direction $\sigma$, we let $\delta(x)$ be the smallest positive root of the equation \(F(x+ \delta\sigma) = 0.\)

\

Letting $u_h$ denote the discrete solution obtained by the order $k$ VEM methods proposed in the previous sections, we measure the following relative errors in broken $H^{1}(\Oh)$ and in the $L^2(\Oh)$ norm
\begin{align}\label{eS}
e^u_1 &:= \frac
{\big(\sum_{K\in \Tess}|| \nabla u - \Pi^{0,k-1}_K(\nabla u_h) ||^2_{0,K}\big)^{1/2}}
{|| \nabla u ||_{0,\Oh}}, & e^u_0 &:= \frac
{\big(\sum_{K\in \Tess}|| u - \Pi^{0,k}_K(u_h) ||^2_{0,K}\big)^{1/2}}
{|| u ||_{0,\Oh}}.
\end{align}

%
	In the numerical tests, we made the standard choice of defining the stabilization bilinear form  $\Svem$ as the suitably weighted (properly scaled) euclidean scalar product of the vectors of degrees of freedom, according to the so called ``D-recipe"  \cite{dassi_mascotto_3DVEM}. In defining the degrees of freedom,  for each $K$ of $\mathcal T_h$ we use, as a basis for $\mathbb P_k(K)$, the set of polynomials obtained by orthogonalizing, with respect to the $L^2(K)$ inner product, the scaled monomials of degree less then or equal to $k$
	\begin{equation}\label{eq:monomials-3d}
	m_{\alpha}(x,y,z) = \left(\frac{x-x_K}{h_K}\right)^{\alpha_1}\left(\frac{y-y_K}{h_K}\right)^{\alpha_2}\left(\frac{z-z_K}{h_K}\right)^{\alpha_3}
	\quad\forall\,{\alpha} = (\alpha_1,\alpha_2,\alpha_3)\in\mathbb N^3,\quad\alpha_1+\alpha_2+\alpha_3 \leq k,
	\end{equation}
	where $(x_K, y_K, z_K)$ are the coordinates of the centroid of $K$. Moreover, as a basis for $\mathbb P_k(\face)$ for each face $\face$ of $\mathcal T_h$, we use the polynomials obtained by orthogonalizing, with respect to the $L^2(\face)$ inner product, the following monomials of degree less then or equal to $k$
	\begin{equation}\label{eq:monomials-2d}
	p_{\beta}(x,y) = \left(\frac{x-x_K}{h_K}\right)^{\beta_1}\left(\frac{y-y_K}{h_K}\right)^{\beta_2}
	\quad\forall\,{\beta} = (\beta_1,\beta_2)\in\mathbb N^2,\quad\beta_1+\beta_2 \leq k.
	\end{equation}

Some geometrical data relative to the meshes employed will be shown below. We use the following notation: $N_P$, number of polyhedra; $N_F$, number of faces; $N_E$, number of edges; $N_V$, number of vertices; $h = \max_{K\in\mathcal T_h}h_k$; $\overline h = \frac{1}{N_P}\sum_{K\in\mathcal T_h}h_K$; $h^{\text{min}} = \min_{K\in\mathcal T_h}h_K^{\text{min}}$, with $h_K^{\text{min}}$ being the minimum distance between any pair of vertices of $K$; $\gamma_0$, regularity parameter introduced in Assumption~\ref{shape_regular}. 
In the figures, the errors are plotted against the average mesh size $\overline h = \frac{1}{N_P}\sum_{K\in\mathcal T_h}h_K$, where $N_P$ is the number of polyhedra of a mesh.

\newcommand{\testzero}{1}

\subsection{Test \testzero}

Our first goal is to validate the method proposed in Section \ref{sec:BHVEM}. To this aim we set $\Omega = [0,1]^3$ and  we choose the right hand side $f$ and the boundary data $g$ for Problem \eqref{prob_mod_PolyDom} in such a way that
\[
u = \frac{1}{3\pi^2}\cos(\pi x)\cos(\pi y)\cos(\pi z)
\]
is the exact solution. We take meshes made of random Voronoi cells like the one shown in Figure~\ref{fig:unitcube-voro}. Geometrical data are listed in Table~\ref{tab:unitcube-voro}. Figure~\ref{fig:convergence-voro} displays the results obtained by Nitsche's method with $\gamma = 100.0$ and $d$-recipe stabilization. The results confirm the theoretical estimate.

\begin{figure}
  \centering
  \includegraphics[width=0.5\textwidth]{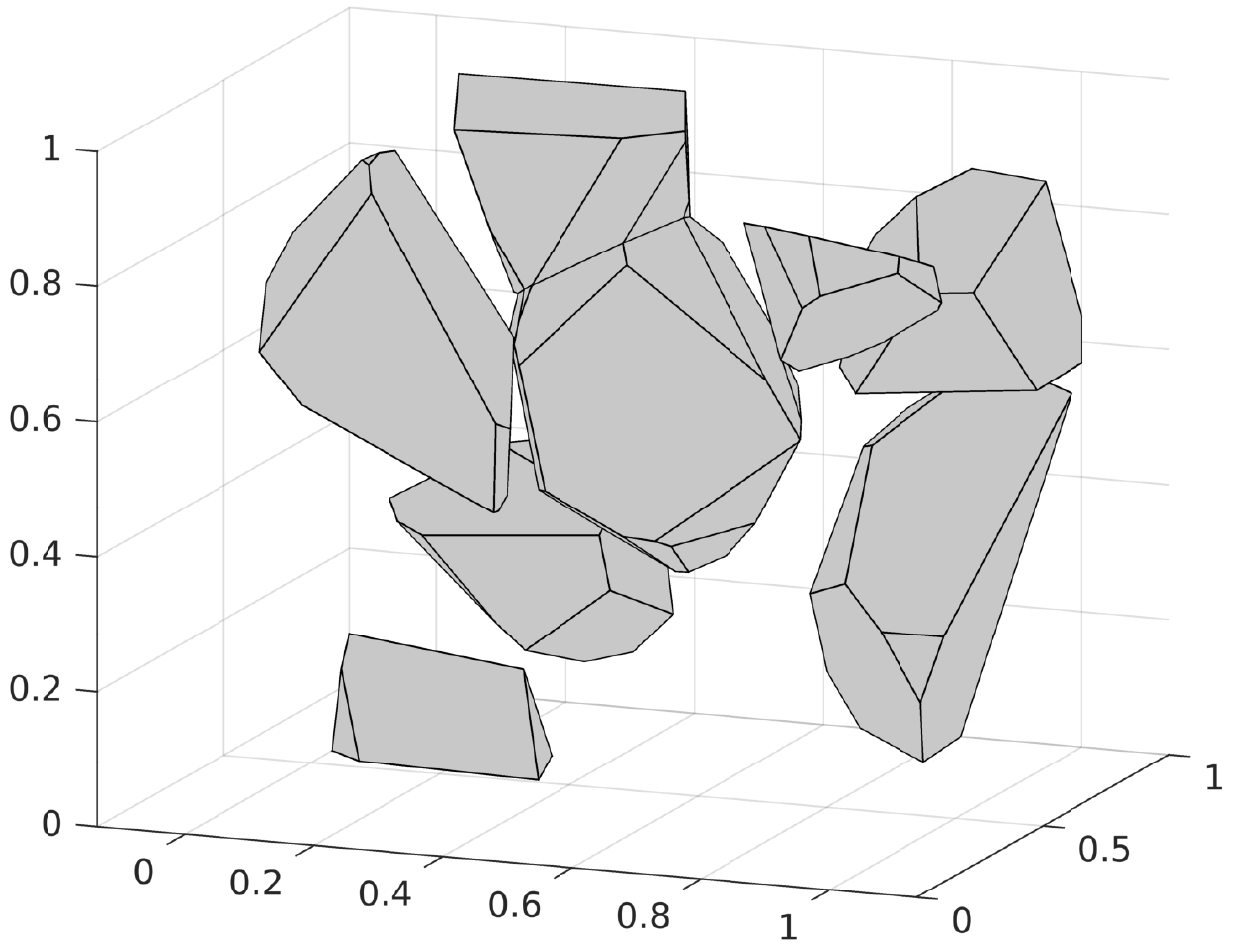}
  \caption{Examples of random Voronoi cells.}
  \label{fig:unitcube-voro}
\end{figure}

\begin{table}
  \centering
  \caption{Data for the meshes of random Voronoi cells for Test \testzero.}
  \label{tab:unitcube-voro}
  \begin{tabular}{
      c
      S[table-format=5.0]
      S[table-format=6.0]
      S[table-format=6.0]
      S[table-format=6.0]
      S[table-format=1.{\roundPrecision}e-1]
      S[table-format=1.{\roundPrecision}e-1]
      S[table-format=1.{\roundPrecision}e-1]
      S[table-format=1.{\roundPrecision}e-1]
    }
    \toprule
        {Mesh} & {$N_P$} & {$N_F$} & {$N_E$} & {$N_V$} & {$h$} & {$\overline h$} & {$h^{\text{min}}$} & {$\gamma_0$}\\
        \midrule
        voro$_{1}$ & 64 & 415 & 704 & 354 & 7.159673e-01 & 5.187269e-01 & 1.849483e-05 & 8.641639e-02\\
        voro$_{2}$ & 512 & 3625 & 6228 & 3116 & 3.616750e-01 & 2.403042e-01 & 2.602670e-05 & 4.240701e-02\\
        voro$_{3}$ & 4096 & 30364 & 52538 & 26271 & 1.896539e-01 & 1.176534e-01 & 1.313472e-06 & 6.524460e-02\\
        voro$_{4}$ & 32768 & 248586 & 431638 & 215821 & 9.520392e-02 & 5.790240e-02 & 3.684832e-08 & 5.456278e-02\\
        \bottomrule
  \end{tabular}
\end{table}

\begin{figure}
  \begin{tabular}{rl}
    \begin{tikzpicture}[trim axis left]
      \begin{loglogaxis}
	[ mark size=4pt, grid=major, small,
	  xlabel={Average mesh size $\overline h$},
	  ylabel={$e_1^u$},
	  legend columns=4 ]
        \addplot[color=red,mark=x] coordinates {
          (0.518727,0.45378)
          (0.240304,0.247293)
          (0.117653,0.120913)
          (0.0579024,0.0593836)
        };
        \addplot[color=green,mark=+] coordinates {
          (0.518727,0.173633)
          (0.240304,0.0371315)
          (0.117653,0.00916831)
          (0.0579024,0.00220962)
        };
        \addplot[color=blue,mark=o] coordinates {
          (0.518727,0.0317977)
          (0.240304,0.00391588)
          (0.117653,0.000445901)
          (0.0579024,5.29398e-05)
        };
        \addplot[color=orange,mark=square] coordinates {
          (0.518727,0.00770122)
          (0.240304,0.000342951)
          (0.117653,1.95113e-05)
          (0.0579024,1.11862e-06)
        };
        \addplot[thick,color=black,no markers] coordinates {
          (0.240304, 1.11862e-06)
          (0.518727, 1.11862e-06)
          (0.518727, 2.41468e-06)
          (0.240304, 1.11862e-06)
        };
        \node [anchor=west,font=\footnotesize] at (0.518727,2.41468e-06) {$1$};
        \addplot[thick,color=black,no markers] coordinates {
          (0.240304, 1.11862e-06)
          (0.518727, 1.11862e-06)
          (0.518727, 5.21241e-06)
          (0.240304, 1.11862e-06)
        };
        \addplot[thick,color=black,no markers] coordinates {
          (0.240304, 1.11862e-06)
          (0.518727, 1.11862e-06)
          (0.518727, 1.12516e-05)
          (0.240304, 1.11862e-06)
        };
        \addplot[thick,color=black,no markers] coordinates {
          (0.240304, 1.11862e-06)
          (0.518727, 1.11862e-06)
          (0.518727, 2.42881e-05)
          (0.240304, 1.11862e-06)
        };
        \node [anchor=west,font=\footnotesize] at (0.518727,2.42881e-05) {$4$};
      \end{loglogaxis}
    \end{tikzpicture}
    
    &
    
    \begin{tikzpicture}[trim axis right]
      \begin{loglogaxis}
	[ mark size=4pt, grid=major, small,
	  xlabel={Average mesh size $\overline h$},
	  ylabel={$e_0^u$},
	  legend columns=2 ]
        \addplot[color=red,mark=x] coordinates {
          (0.518727,0.165537)
          (0.240304,0.0502974)
          (0.117653,0.0130444)
          (0.0579024,0.00333671)
        };
        \addplot[color=green,mark=+] coordinates {
          (0.518727,0.0587216)
          (0.240304,0.00585629)
          (0.117653,0.000709113)
          (0.0579024,8.40116e-05)
        };
        \addplot[color=blue,mark=o] coordinates {
          (0.518727,0.00814477)
          (0.240304,0.000469936)
          (0.117653,2.61778e-05)
          (0.0579024,1.52652e-06)
        };
        \addplot[color=orange,mark=square] coordinates {
          (0.518727,0.00154632)
          (0.240304,2.93908e-05)
          (0.117653,8.11017e-07)
          (0.0579024,2.21885e-08)
        };
        \addplot[thick,color=black,no markers] coordinates {
          (0.240304, 2.21885e-08)
          (0.518727, 2.21885e-08)
          (0.518727, 1.03391e-07)
          (0.240304, 2.21885e-08)
        };
        \node [anchor=west,font=\footnotesize] at (0.518727,1.03391e-07) {$2$};
        \addplot[thick,color=black,no markers] coordinates {
          (0.240304, 2.21885e-08)
          (0.518727, 2.21885e-08)
          (0.518727, 2.23183e-07)
          (0.240304, 2.21885e-08)
        };
        \addplot[thick,color=black,no markers] coordinates {
          (0.240304, 2.21885e-08)
          (0.518727, 2.21885e-08)
          (0.518727, 4.8177e-07)
          (0.240304, 2.21885e-08)
        };
        \addplot[thick,color=black,no markers] coordinates {
          (0.240304, 2.21885e-08)
          (0.518727, 2.21885e-08)
          (0.518727, 1.03996e-06)
          (0.240304, 2.21885e-08)
        };
        \node [anchor=west,font=\footnotesize] at (0.518727,1.03996e-06) {$5$};
      \end{loglogaxis}
    \end{tikzpicture}
  \end{tabular}
  
    \begin{tabular}{c}
    \begin{tikzpicture} 
      \begin{axis}[%
          hide axis,
          xmin=10,
          xmax=50,
          ymin=0,
          ymax=0.4,
          legend style={draw=white!15!black,legend cell align=left},
          legend columns=4
        ]
        
        \addlegendimage{color=red,mark=x}
        \addlegendentry{$k = 1$}
        
        \addlegendimage{color=green,mark=+}
        \addlegendentry{$k = 2$}
        
        \addlegendimage{color=blue,mark=o}
        \addlegendentry{$k = 3$}
        
        \addlegendimage{color=orange,mark=square}
        \addlegendentry{$k = 4$}
      \end{axis}
    \end{tikzpicture}
  \end{tabular}

  \caption{Convergence plots for Nitsche's method on Test \testzero, with $\gamma = 100.0$ and $d$-recipe stabilization on the random Voronoi meshes of the unit cube.}
  \label{fig:convergence-voro}
\end{figure}
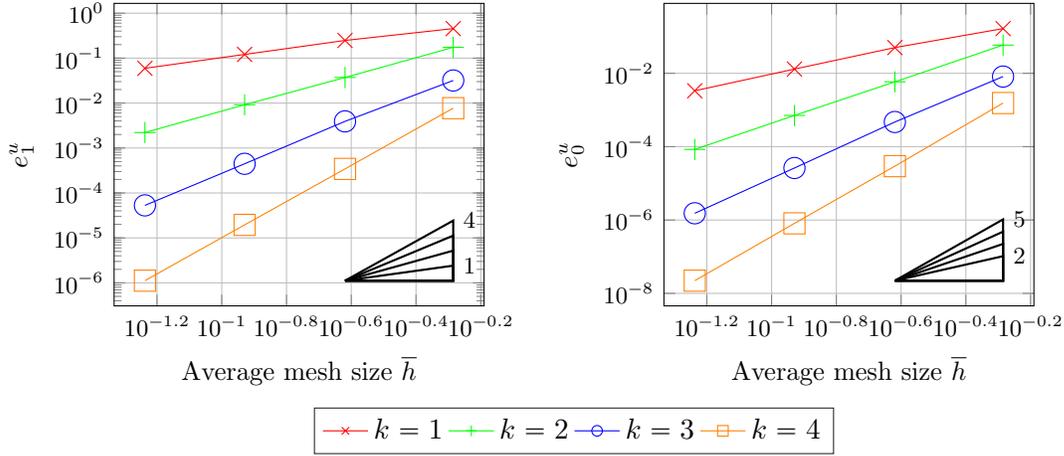

\newcommand{\testone}{2}
\newcommand{\testwo}{3}

\subsection{Test \testone}
We now consider the method proposed in Section \ref{sec:curvo}.
As a domain, we take the volume enclosed by the torus shown in Figure~\ref{fig:torus}. The right hand side $f$ and the boundary data $g$ are chosen in such a way that the solution to our model problem is given by: 
\[
u = \cos(\pi z/8)\sqrt{x^2+y^2}.
\]
We consider a family of highly regular meshes like the one shown in Figure~\ref{fig:torus-mesh}. Geometrical data are listed in Table~\ref{tab:torus-structured}. 
To generate these meshes, we exploit the fact that, for this test case, the domain is a solid of revolution: first, we generate a two dimensional tesselation of a cross section of a plane passing through the z-axis; second, we extrude it using stepwise rotations around the z-axis. The resulting mesh $\Omega_h$ is interpolatory on the whole boundary of the domain, including its concave portion. 
	The final step consists in slightly perturbing the boundary faces 
	that lie outside $\Omega$ 
	enough to  
	ensure that $\Omega_h\subset\Omega$, while also taking care of keeping them flat.
\begin{figure}
  \centering
  \subfloat[Torus centered at $(0,0,0)$. The radius from the center of the hole to the center of the torus if $1$, and the radius of the tube is $0.5$.]{\resizebox{0.5\columnwidth}{!}{
  \begin{tikzpicture}
    \begin{axis}[axis equal image,z buffer=sort,colormap/blackwhite,grid=major]
      \addplot3[surf,blue,samples=30,shader=interp,domain=0:2*pi]
      ({(1+0.5*cos(deg(x)))*cos(deg(y))},{(1+0.5*cos(deg(x)))*sin(deg(y))},{0.5*sin(deg(x))});
    \end{axis}
  \end{tikzpicture}}
  \label{fig:torus}
  }
  \subfloat[Example of a regular mesh of the toroidal domain.]{\includegraphics[width=.5\columnwidth]{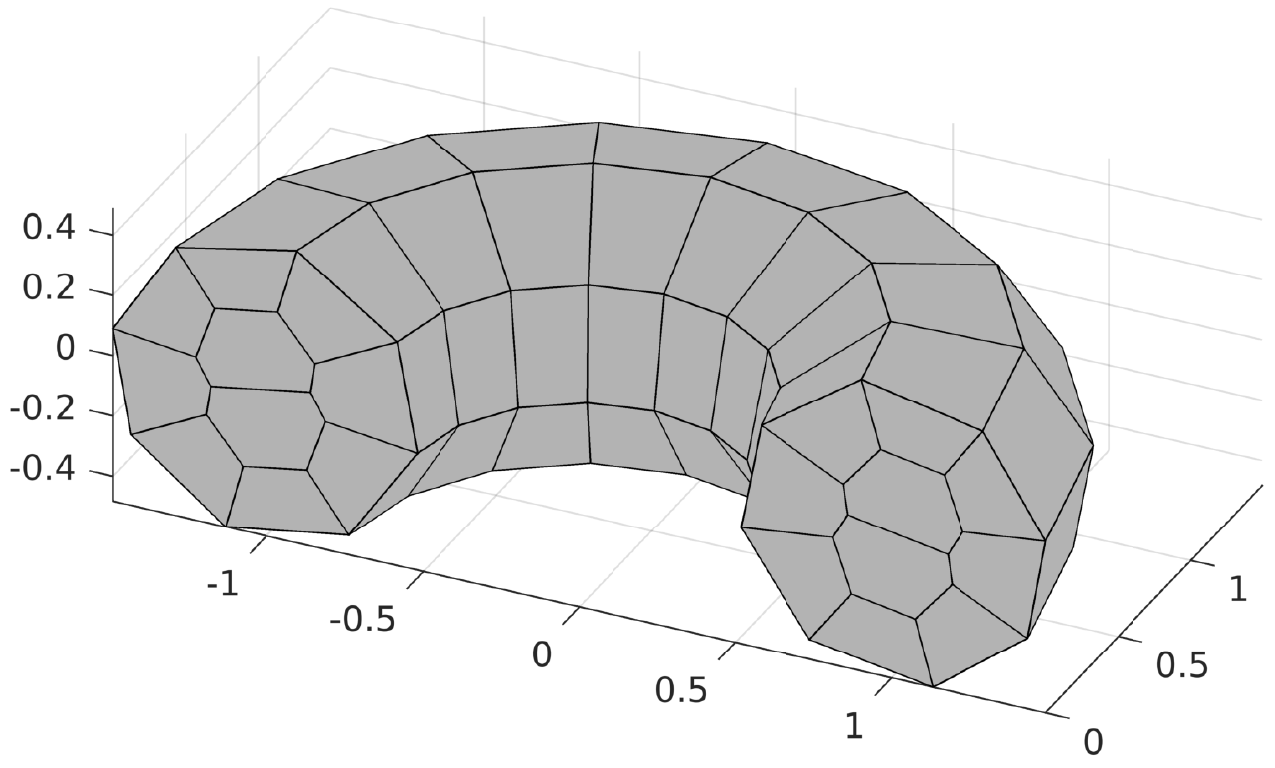}\label{fig:torus-mesh}}
\caption{Geometry for Test~\testone}
\end{figure} 

\begin{table}
  \centering
  \caption{Data for the regular meshes of the toroidal domain.}
  \label{tab:torus-structured}
  \begin{tabular}{
      c
      S[table-format=6.0]
      S[table-format=7.0]
      S[table-format=7.0]
      S[table-format=7.0]
      S[table-format=1.{\roundPrecision}e-1]
      S[table-format=1.{\roundPrecision}e-1]
      S[table-format=1.{\roundPrecision}e-1]
      S[table-format=1.{\roundPrecision}e-1]
    }
    \toprule
        {Mesh} & {$N_P$} & {$N_F$} & {$N_E$} & {$N_V$} & {$h$} & {$\overline h$} & {$h^{\text{min}}$} & {$\gamma_0$}\\
        \midrule
        torus$_{1}$ & 160 & 592 & 720 & 288 & 6.659261e-01 & 5.614744e-01 & 9.831548e-02 & 1.667362e-01\\
        torus$_{2}$ & 1280 & 5024 & 6240 & 2496 & 3.342116e-01 & 2.720203e-01 & 3.371526e-03 & 1.737496e-01\\
        torus$_{3}$ & 10240 & 40768 & 50880 & 20352 & 1.726452e-01 & 1.348243e-01 & 1.923248e-03 & 1.885854e-01\\
        torus$_{4}$ & 81920 & 327296 & 408960 & 163584 & 8.736604e-02 & 6.718889e-02 & 9.134261e-04 & 1.888235e-01\\
        \bottomrule
  \end{tabular}
\end{table}


\begin{figure}
  \begin{tabular}{rl}
    \begin{tikzpicture}[trim axis left]
      \begin{loglogaxis}
	[ mark size=4pt, grid=major, small,
	  xlabel={Average mesh size $\overline h$},
	  ylabel={$e_1^u$} ]
        \addplot[color=red,mark=x] coordinates {
          (0.561474,0.115251)
          (0.27202,0.0572143)
          (0.134824,0.0285515)
          (0.0671889,0.0142706)
        };
        \addplot[color=green,mark=+] coordinates {
          (0.561474,0.0168358)
          (0.27202,0.00389292)
          (0.134824,0.000878697)
          (0.0671889,0.000197784)
        };
        \addplot[color=blue,mark=o] coordinates {
          (0.561474,0.00172072)
          (0.27202,0.00020075)
          (0.134824,2.48085e-05)
          (0.0671889,3.07132e-06)
        };
        \addplot[color=orange,mark=square] coordinates {
          (0.561474,0.000418501)
          (0.27202,2.07201e-05)
          (0.134824,1.1275e-06)
          (0.0671889,6.56087e-08)
        };

        \addplot[thick,color=black,no markers] coordinates {
          (0.27202, 6.56087e-08)
          (0.561474, 6.56087e-08)
          (0.561474, 1.35422e-07)
          (0.27202, 6.56087e-08)
        };
        \node [anchor=west,font=\footnotesize] at (0.561474,1.35422e-07) {$1$};
        \addplot[thick,color=black,no markers] coordinates {
          (0.27202, 6.56087e-08)
          (0.561474, 6.56087e-08)
          (0.561474, 2.79524e-07)
          (0.27202, 6.56087e-08)
        };
        \addplot[thick,color=black,no markers] coordinates {
          (0.27202, 6.56087e-08)
          (0.561474, 6.56087e-08)
          (0.561474, 5.76963e-07)
          (0.27202, 6.56087e-08)
        };
        \addplot[thick,color=black,no markers] coordinates {
          (0.27202, 6.56087e-08)
          (0.561474, 6.56087e-08)
          (0.561474, 1.1909e-06)
          (0.27202, 6.56087e-08)
        };
        \node [anchor=west,font=\footnotesize] at (0.561474,1.1909e-06) {$4$};
      \end{loglogaxis}
    \end{tikzpicture}
    
    &
    
    \begin{tikzpicture}[trim axis right]
      \begin{loglogaxis}
	[ mark size=4pt, grid=major, small,
	  xlabel={Average mesh size $\overline h$},
	  ylabel={$e_0^u$} ]
        \addplot[color=red,mark=x] coordinates {
          (0.561474,0.00590172)
          (0.27202,0.00145595)
          (0.134824,0.000363026)
          (0.0671889,9.05957e-05)
        };
        \addplot[color=green,mark=+] coordinates {
          (0.561474,0.000759653)
          (0.27202,8.75969e-05)
          (0.134824,9.86367e-06)
          (0.0671889,1.07741e-06)
        };
        \addplot[color=blue,mark=o] coordinates {
          (0.561474,5.3669e-05)
          (0.27202,3.15751e-06)
          (0.134824,1.91987e-07)
          (0.0671889,1.21039e-08)
        };
        \addplot[color=orange,mark=square] coordinates {
          (0.561474,6.26592e-06)
          (0.27202,1.8413e-07)
          (0.134824,5.65031e-09)
          (0.0671889,1.75264e-10)
        };

        \addplot[thick,color=black,no markers] coordinates {
          (0.27202, 1.75264e-10)
          (0.561474, 1.75264e-10)
          (0.561474, 7.46707e-10)
          (0.27202, 1.75264e-10)
        };
        \node [anchor=west,font=\footnotesize] at (0.561474,7.46707e-10) {$2$};
        \addplot[thick,color=black,no markers] coordinates {
          (0.27202, 1.75264e-10)
          (0.561474, 1.75264e-10)
          (0.561474, 1.54127e-09)
          (0.27202, 1.75264e-10)
        };
        \addplot[thick,color=black,no markers] coordinates {
          (0.27202, 1.75264e-10)
          (0.561474, 1.75264e-10)
          (0.561474, 3.18132e-09)
          (0.27202, 1.75264e-10)
        };
        \addplot[thick,color=black,no markers] coordinates {
          (0.27202, 1.75264e-10)
          (0.561474, 1.75264e-10)
          (0.561474, 6.56654e-09)
          (0.27202, 1.75264e-10)
        };
        \node [anchor=west,font=\footnotesize] at (0.561474,6.56654e-09) {$5$};
      \end{loglogaxis}
    \end{tikzpicture}
  \end{tabular}
  
    \begin{tabular}{c}
    \begin{tikzpicture} 
      \begin{axis}[%
          hide axis,
          xmin=10,
          xmax=50,
          ymin=0,
          ymax=0.4,
          legend style={draw=white!15!black,legend cell align=left},
          legend columns=4
        ]
        
        \addlegendimage{color=red,mark=x}
        \addlegendentry{$k = 1$}
        
        \addlegendimage{color=green,mark=+}
        \addlegendentry{$k = 2$}
        
        \addlegendimage{color=blue,mark=o}
        \addlegendentry{$k = 3$}
        
        \addlegendimage{color=orange,mark=square}
        \addlegendentry{$k = 4$}
      \end{axis}
    \end{tikzpicture}
  \end{tabular}

  \caption{Convergence plots for the BH method with $k' = k-1$, $\alpha = 0.001$, $k^* = k$, $\delta$ computed along the outer normal, and $d$-recipe stabilization on the regular meshes of the toroidal domain.}
  \label{fig:torus_H1_L2}
\end{figure}

{\color{red}
\begin{table}
  \centering
  \caption{Comparison between the errors and the estimated convergence rates (ecr) obtained either with the $d$-recipe or the euclidean stabilization for $k = 4$.}
  \label{tab:torus-stabilizations}
  \begin{tabular}{
      c
      S[table-format=1.{\roundPrecision}e-1]
      S[table-format=1.2]
      S[table-format=1.{\roundPrecision}e-1]
      S[table-format=1.2]
      S[table-format=1.{\roundPrecision}e-1]
      S[table-format=1.2]
      S[table-format=1.{\roundPrecision}e-1]
      S[table-format=1.2]
    }
    \toprule
    \multirow{2}*{Mesh} & \multicolumn{4}{c}{$d$-recipe} & \multicolumn{4}{c}{euclidean}\\
    \cmidrule(lr){2-5} \cmidrule(lr){6-9}
    & {$e_1^u$} & {ecr} & {$e_0^u$} & {ecr} & {$e_1^u$} & {ecr} & {$e_0^u$} & {ecr}\\
    \midrule
    torus$_1$ & 4.185013e-04   &   {-}   &   6.265923e-06   &   {-} & 4.992212e-02   &   {-}   &   2.047646e-04   &   {-}\\
    torus$_2$ & 2.072005e-05   &   4.147400   &   1.841304e-07   &   4.867238 & 2.799140e-03   &   3.975704   &   2.472961e-06   &   6.094256\\
    torus$_3$ & 1.127496e-06   &   4.147436   &   5.650313e-09   &   4.963544 & 1.051680e-04   &   4.675151   &   4.035922e-08   &   5.863124\\
    torus$_4$ & 6.560872e-08   &   4.083548   &   1.752637e-10   &   4.986866 & 3.862017e-06   &   4.744492   &   8.923135e-10   &   5.473012\\
    \bottomrule
  \end{tabular}
\end{table}
}

	\begin{table}
  \centering
  \caption{Numerical estimates of the $1$-norm condition number of the global stiffness matrices obtained by solving Test \testone~ with Nitsche's method with $\gamma = 1000$ and $d$-recipe stabilization.}
  \label{tab:torus-condest-d-recipe}
  \begin{tabular}
    {
      c
      S[table-format=1.{\roundPrecision}e+1]
      S[table-format=1.{\roundPrecision}e+1]
      S[table-format=1.{\roundPrecision}e+1]
      S[table-format=1.{\roundPrecision}e+2]
    }
    \toprule
    \multicolumn{5}{c}{With BDT correction}\\
    \midrule
    {Mesh} & {$k = 1$} & {$k = 2$} & {$k = 3$} & {$k = 4$}\\
    \midrule
    torus$_1$ & 1.586304451148514e+03 & 2.789711832237715e+04 & 2.493787456605698e+06 & 8.630045027962251e+08\\
    torus$_2$ & 4.857920872559084e+03 & 6.970237581239808e+04 & 4.611372093143048e+06 & 4.587007326711392e+09\\
    torus$_3$ & 1.908920726954592e+04 & 2.334391219008937e+05 & 1.624468844395351e+07 & 3.530341785387154e+09\\
    torus$_4$ & 7.719503856552491e+04 & 9.143943094689633e+05 & 6.156586341248339e+07 & 3.583748998563371e+10\\
    \midrule
    \multicolumn{5}{c}{Without BDT correction}\\
    \midrule
    {Mesh} & {$k = 1$} & {$k = 2$} & {$k = 3$} & {$k = 4$}\\
    \midrule
    torus$_1$ & 1.137638582135073e+03 & 1.465560140736096e+04 & 9.696495829775213e+05 & 1.946078815578626e+08\\
    torus$_2$ & 4.402377401054355e+03 & 5.325964803120038e+04 & 3.715623047379809e+06 & 4.445026208477753e+09\\
    torus$_3$ & 1.832941971380572e+04 & 2.095957731764170e+05 & 1.528297074592439e+07 & 3.503322903716391e+09\\
    torus$_4$ & 7.564292980415387e+04 & 8.647849803302061e+05 & 5.982933255123231e+07 & 3.576979207717406e+10\\
    \bottomrule
  \end{tabular}
\end{table}

Figure~\ref{fig:torus_H1_L2} displays the results obtained by the BH method with $k' = k-1$, $\alpha = 0.001$, $k^* = k$, $\delta$ computed along the outer normal ($\sigma=\nu_h$), and $d$-recipe stabilization.  As expected, we get very similar results by using  Nitsche's method (equivalent to the Barbosa--Hughes method with $k'= k$) with $\gamma = 1000$ (not shown). It is worth observing that the choice of the VEM stabilization becomes critical to get proper convergence rates as the order $k$ of the VEM method increases.  Table~\ref{tab:torus-stabilizations} shows what we get by replacing the $d$-recipe with the euclidean stabilization for $k = 4$, which still provides the correct order but with much worse values for the error.

For this test case, we also compared the 1-norm condition number of the matrices relative to Nitsche's method with and without the BDT correction. The results, computed according to \cite{hager1984,higham2000} and displayed in 
Table~\ref{tab:torus-condest-d-recipe}, show that, asymptotically as $h$ decreases, the addition of the correction term does not significantly degrade the condition number, despite the presence of the higher order derivatives $\partial^j_\sigma$, whose negative effect is indeed dampened by the term $\delta^j$.

\subsection{Test~\testwo}
Creating a polyhedral mesh approximating a general three-dimensional domain with curved boundaries is a challenging task, around which an active area of research revolves~\cite{vorocrust}. In many fields like computer graphics, complex domains are simply and easily approximated by using a collection of cubes, providing an approximation of the curved boundary surfaces characterized by $\delta = \mathcal O(h_K)$ and yielding, when solving a PDE, an $O(h)$ error, which, for higher order methods, dominates the overall error. In this framework, the virtual element method allows to easily build polyhedral decomposition, where the elements are obtained as union of cubes, in such a way that condition \eqref{deftau}  holds, so that we can resort to either one of the methods proposed in Section \ref{sec:curvo}, with optimal error bounds also for higher order discretizations. To demonstrate this potential, we perform the following test. We consider meshes whose elements of diameter $\sim h$ are union of cubes with edge length $h/2^n$ with $n$ independent of $h$ large enough, so that condition \eqref{deftau} holds. We point out that the underlying hexahedral mesh does not, in general, satisfy such a condition, an then it is not well suited to be used directly. For the purpose of the present test, we obtain such elements by successive refinement of a starting cubic mesh with 
 
With meshes of this kind, for which it may well happen that a facet is almost orthogonal to the boundary of the smooth domain, choosing the right direction $\sigma$, along which to compute  the Taylor expansion involved in the boundary correction, is of paramount importance. Here we define $\sigma$ on a facet $f$ as the direction of the gradient of the distance from the boundary, computed at the center of the facet. 

\

\begin{figure}
  \centering
  \subfloat[Starting mesh.]{\includegraphics[width=0.3\textwidth]{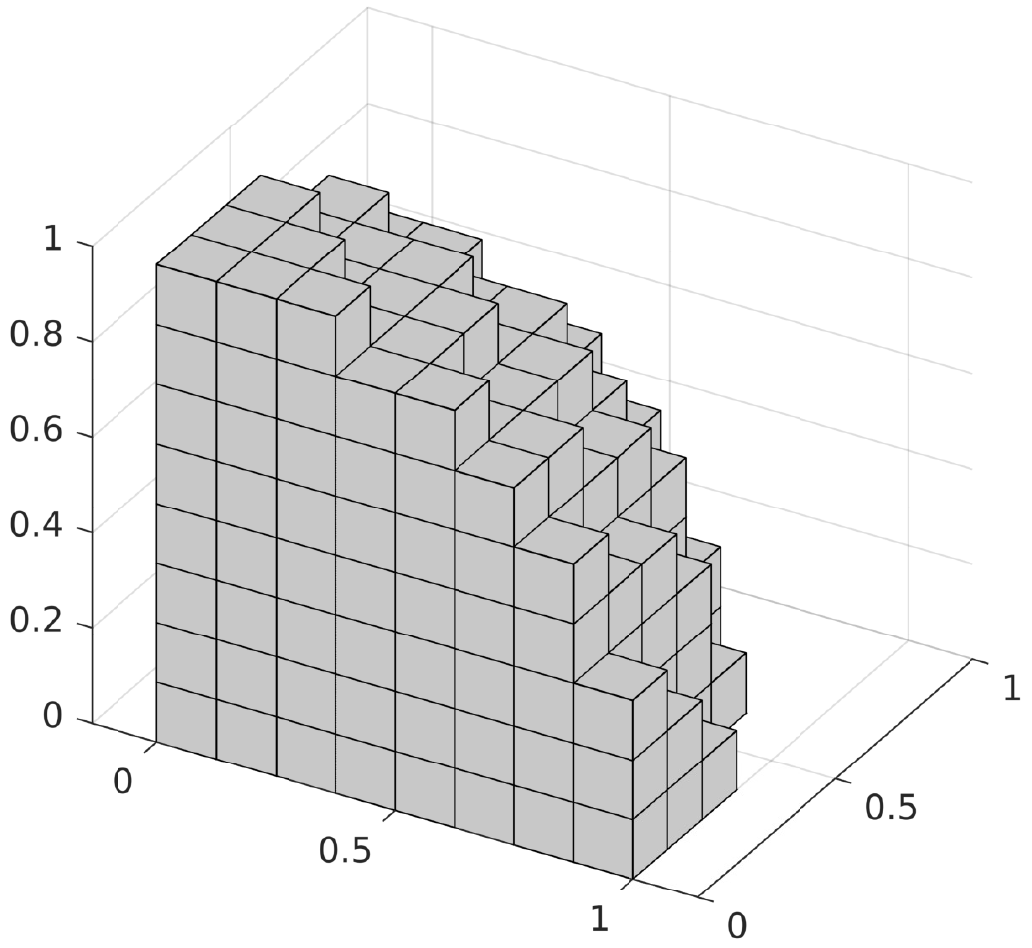}}\quad
  \subfloat[Mesh after one refinement step.]{\includegraphics[width=0.3\textwidth]{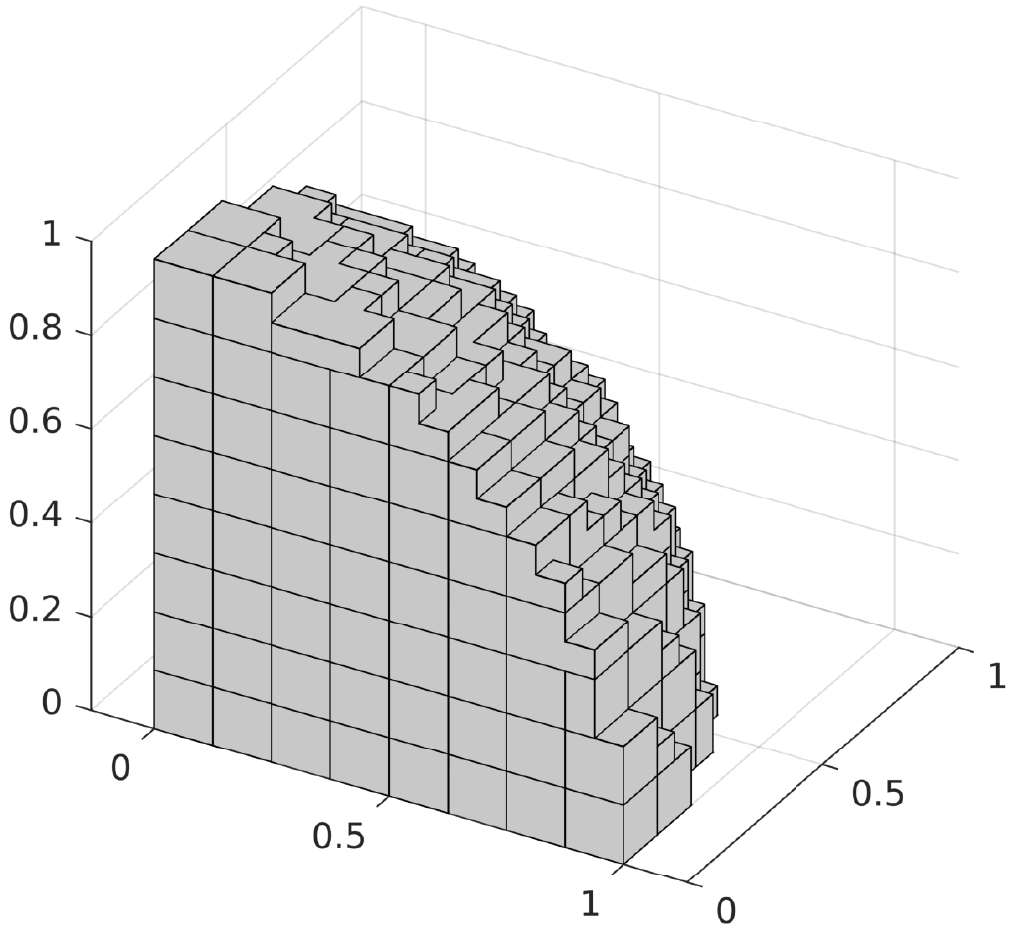}}\quad
  \subfloat[Mesh after two refinement steps.]{\includegraphics[width=0.3\textwidth]{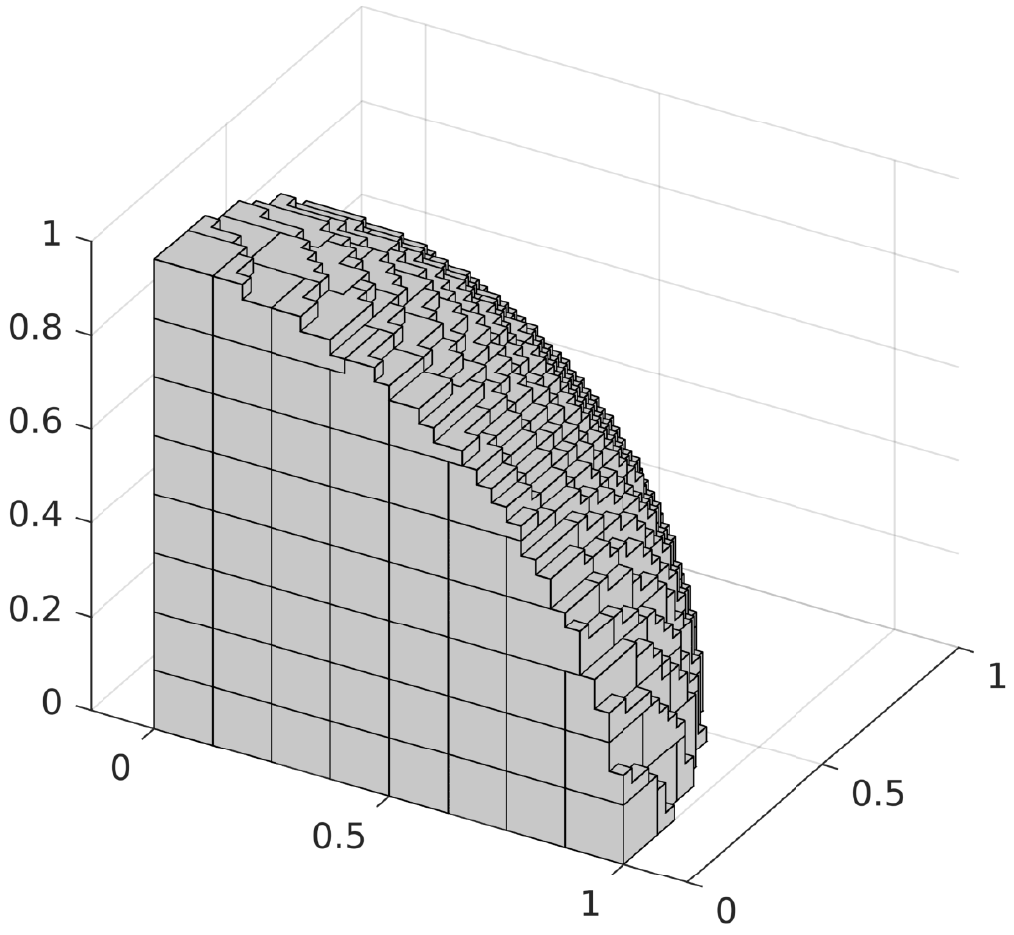}}
  \caption{Starting cubic mesh of a portion of the unit ball and two refined meshes.}
  \label{fig:ball3d-sequence}
\end{figure}

\begin{figure}
  \centering
  \subfloat[Starting mesh.]{\includegraphics[width=0.3\textwidth]{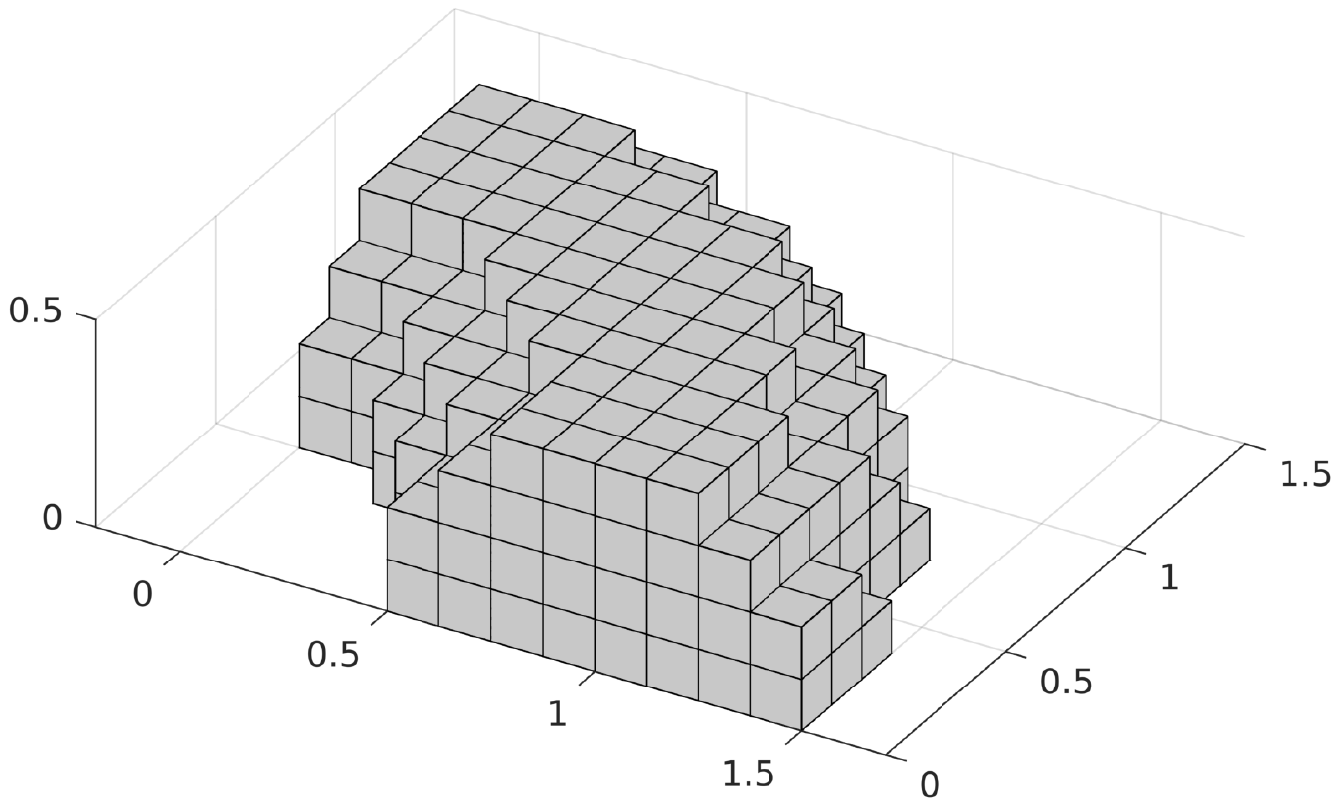}}\quad
  \subfloat[Mesh after one refinement step.]{\includegraphics[width=0.3\textwidth]{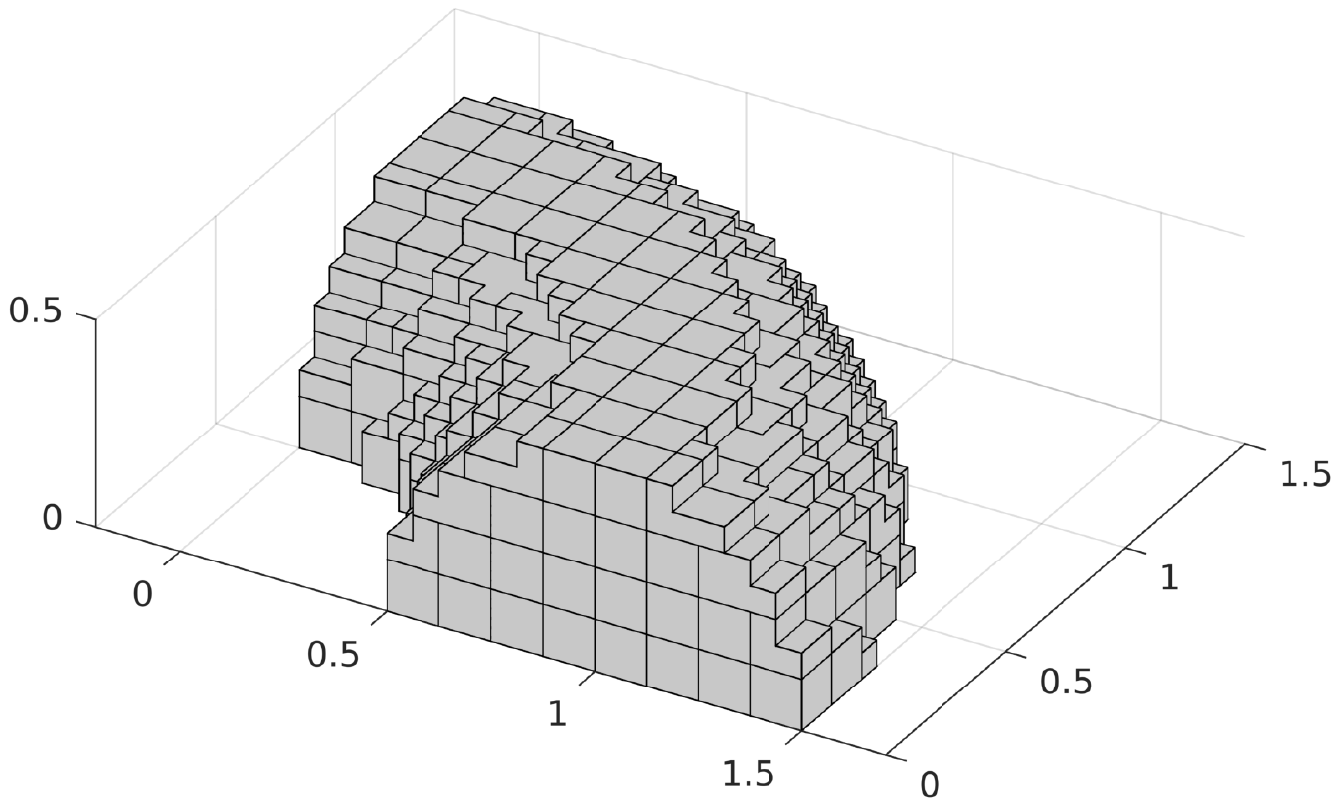}}\quad
  \subfloat[Mesh after two refinement steps.]{\includegraphics[width=0.3\textwidth]{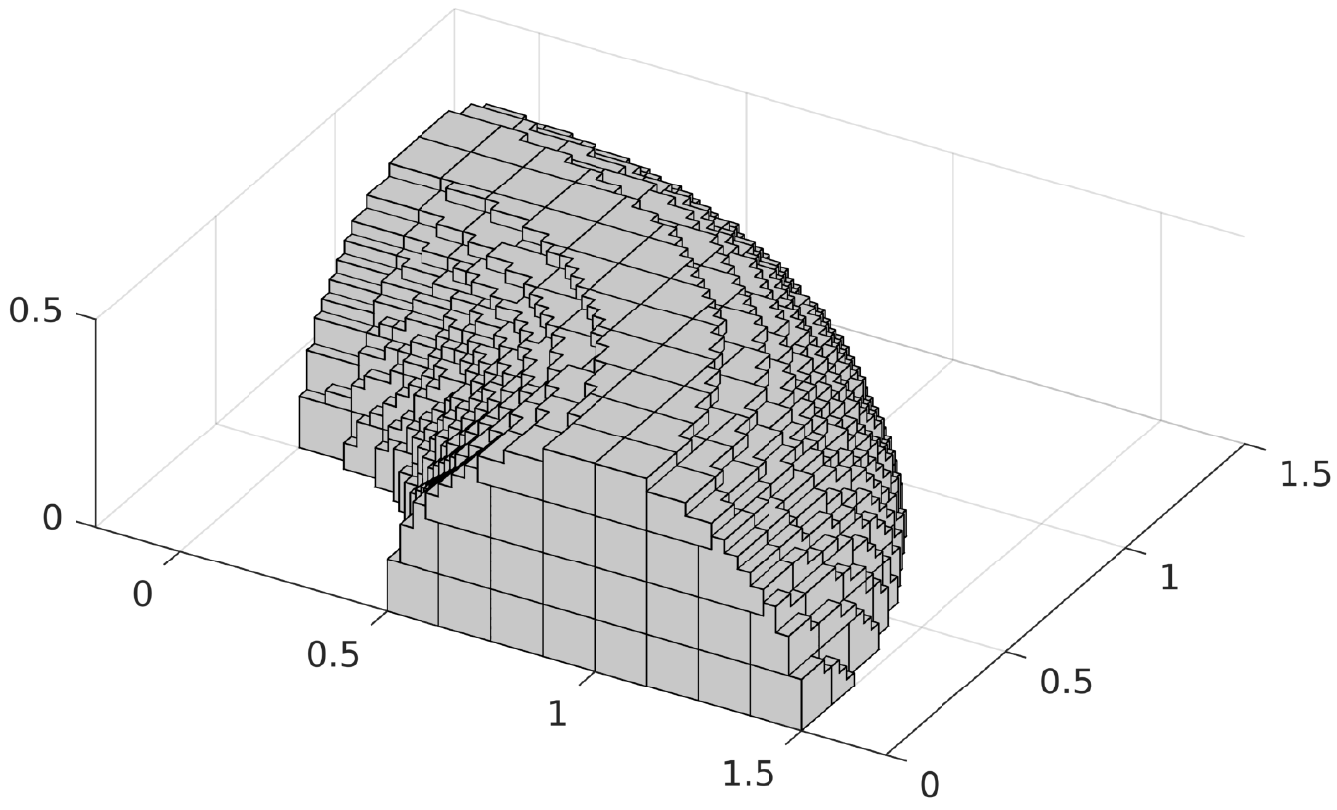}}
  \caption{Starting cubic mesh of a portion of the toroidal domain and two refined meshes.}
  \label{fig:torus3d-sequence}
\end{figure}

\begin{table}
  \centering
  \caption{Data for the meshes of the portion of the unit ball obtained after four steps of refinement.}
  \label{tab:ball-ref4}
  \begin{tabular}{
      c
      S[table-format=4.0]
      S[table-format=5.0]
      S[table-format=6.0]
      S[table-format=5.0]
      S[table-format=1.{\roundPrecision}e-1]
      S[table-format=1.{\roundPrecision}e-1]
      S[table-format=1.{\roundPrecision}e-1]
    }
    \toprule
        {Mesh} & {$N_P$} & {$N_F$} & {$N_E$} & {$N_V$} & {$h$} & {$\overline h$} & {$h^{\text{min}}$}\\
        \midrule
ball$^b_{1}$ & 35 & 3795 & 9340 & 5581 & 4.684895e-01 & 4.387450e-01 & 1.562500e-02\\
ball$^b_{2}$ & 272 & 15551 & 37332 & 22054 & 2.520667e-01 & 2.189163e-01 & 7.812500e-03\\
ball$^b_{3}$ & 2157 & 65524 & 151775 & 88409 & 1.260333e-01 & 1.089149e-01 & 3.906250e-03\\
        \bottomrule
  \end{tabular}
\end{table}

We run experiments on two different domains characterized by different curvatures. As bases for $\mathbb P_k(K)$ and $\mathbb P_k(f)$, we use the monomials defined in equations \eqref{eq:monomials-3d} and \eqref{eq:monomials-2d}, respectively.
\subsubsection*{Domain 1} The first domain is the portion of the unit ball contained in the octant $x \geq 0, y \geq 0, z \geq 0$, which has constant curvature. The right hand side $f$ and the boundary data $g$ are chosen such that
    \[
    u = \cos\left(\frac{\pi}{4}(x^2+y^2+z^2)\right)
    \]
    is the exact solution. We start with a family of nested cubical meshes, where a cube is retained if its center belongs to the actual domain. As expected, if the cubic elements are used ``as they are'' and no local refinement is performed, Nitsche's method with $\gamma = 1000$, $d$-recipe stabilization, $k^* = k$, and $\delta$ computed along the gradient of the distance to the domain boundary, yields linear convergence for any $k \geq 1$ (see Figure~\ref{fig:ball-voxels} for $k = 1,2$). Then, we consider families of nested meshes where boundary elements are replaced by polyhedral elements obtained as unions of cubes of refined meshes, such that $\delta$ is reduced by a factor of $1/2$ after each refinement step (see Figure~\ref{fig:ball3d-sequence}). Data for the family of meshes obtained by reducing $\delta$ by a factor of $1/16$ ($n=4$ refinement steps) are shown in Table~\ref{tab:ball-ref4}. We remark that, while the number of faces, edges and vertices may seem quite high, these are much lower of the number of faces, edges and vertices of the uniform cubic tessellation that we would need to get the same error by the finite element method with no boundary correction.
    	Moreover, the requirement $\tau \leq \tau_\alpha$ ensuring optimal convergence allows for sequences of meshes where the number of refinements (and therefore the number of faces) is uniformly bounded as $h$ decreases. Nevertheless, in three dimension, the relatively large number of faces results in large number of boundary degrees of freedom and in large dense blocks in the stiffness matrix, and further work is needed to address these issues. Some quite promising results are already available in two dimensions \cite{squadrettamento}, where a tailored static condensation technique can be applied that allows to drastically reduce the size of the dense blocks in the stiffness matrix, as well as the overall size of the linear system, that can be lowered down to a size comparable to the one it would have if no refinements was performed. Such a paper also contains a detailed analysis of the method, for meshes obtained as the union of elements of an underlying structured squared mesh, showing that bounds \eqref{inversebase} through \eqref{eq:approxdn}, as well as an optimal best approximation result for the VEM space hold under a much weaker assumption than Assumption \ref{shape_regular}.

    On these meshes, we attain optimal convergence also for $k = 2$ (see Figure~\ref{fig:ball-voxels}). By looking at the boundary element shown in Figure~\ref{fig:ball-ref4-bnd}, which belongs to mesh ball$^b_1$, we clearly see the advantage of combining the great flexibility of the VEM framework with our strategy for imposing boundary conditions.

\begin{figure}
    \centering
    \includegraphics[width=0.5\columnwidth]{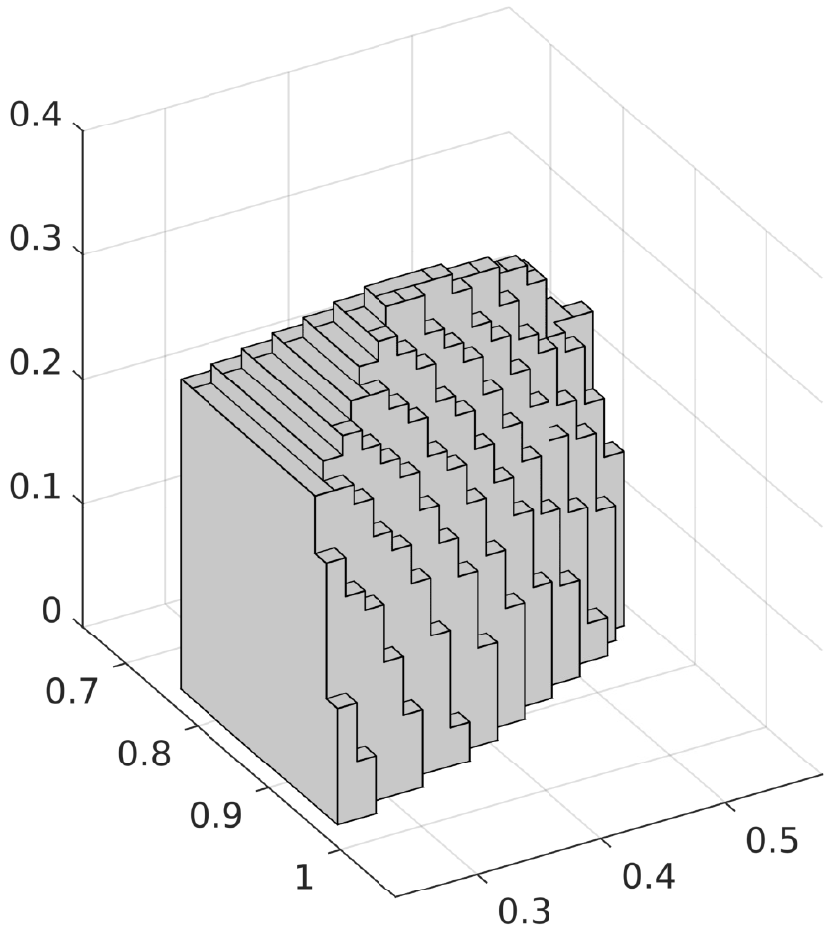}
    \caption{A boundary element in mesh ball$^b_1$.}
    \label{fig:ball-ref4-bnd}
\end{figure}

\subsubsection*{Domain 2} The second domain is the portion of the toroidal domain from \emph{Test~\testone} contained in the octant $x \geq 0, y \geq 0, z \geq 0$, which has non constant curvature. The right hand side $f$ and the boundary data $g$ are chosen as in \emph{Test~\testone}. As done before, we start with a family of cubical meshes (family $a$), and then generate meshes characterized by increasing levels of refinement of the boundary elements. Figure~\ref{fig:torus3d-sequence} shows an example of an initial mesh and the meshes obtained after different levels of refinement. Using Nitsche's method with $\gamma = 1000$, $d$-recipe stabilization, $k^* = k$, $\delta$ computed along the gradient of the distance to the domain boundary, we get optimal convergence rates for $k \leq 2$ on the family of meshes obtained after $5$ steps of refinement (family $b$) (see Figure~\ref{fig:torus-voxels}). Note the additional refinement step required in this case to restore convergence for $k = 2$. This suggests that different domains may require different levels of refinement.

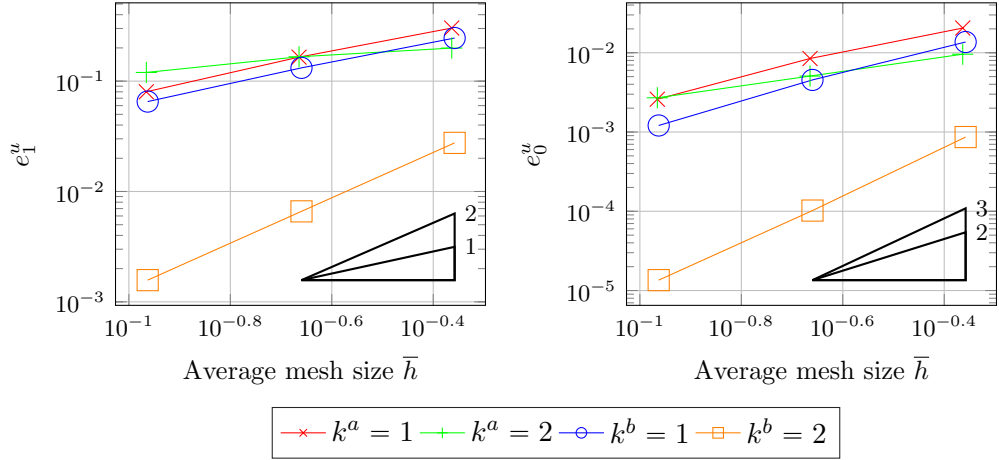
\begin{figure}
  \begin{tabular}{rl}
    
    \begin{tikzpicture}[trim axis left]
      \begin{loglogaxis}
	[ mark size=4pt, grid=major, small,
	  xlabel={Average mesh size $\overline h$},
	  ylabel={$e_1^u$} ]
        \addplot[color=red,mark=x] coordinates {
        (0.433013,0.30269)
        (0.216506,0.164983)
        (0.108253,0.0801115)
        };
        \addplot[color=green,mark=+] coordinates {
        (0.433013,0.199412)
        (0.216506,0.166182)
        (0.108253,0.119847)
        };
        \addplot[color=blue,mark=o] coordinates {
        (0.438745,0.245794)
        (0.218916,0.131876)
        (0.108915,0.0652921)
        };
        \addplot[color=orange,mark=square] coordinates {
        (0.438745,0.0275734)
        (0.218916,0.00660703)
        (0.108915,0.001573)
        };
        \addplot[thick,color=black,no markers] coordinates {
        (0.218916, 0.001573)
        (0.438745, 0.001573)
        (0.438745, 0.00315256)
        (0.218916, 0.001573)
        };
        \node [anchor=west,font=\footnotesize] at (0.438745,0.00315256) {$1$};
        \addplot[thick,color=black,no markers] coordinates {
        (0.218916, 0.001573)
        (0.438745, 0.001573)
        (0.438745, 0.00631827)
        (0.218916, 0.001573)
        };
        \node [anchor=west,font=\footnotesize] at (0.438745,0.00631827) {$2$};
      \end{loglogaxis}
    \end{tikzpicture}
    
    &
    
    \begin{tikzpicture}[trim axis right]
      \begin{loglogaxis}
	[ mark size=4pt, grid=major, small,
	  xlabel={Average mesh size $\overline h$},
	  ylabel={$e_0^u$} ]
        \addplot[color=red,mark=x] coordinates {
        (0.433013,0.0205021)
        (0.216506,0.00845468)
        (0.108253,0.00259289)
        };
        \addplot[color=green,mark=+] coordinates {
        (0.433013,0.00958811)
        (0.216506,0.0050781)
        (0.108253,0.00270206)
        };
        \addplot[color=blue,mark=o] coordinates {
        (0.438745,0.0136654)
        (0.218916,0.0045416)
        (0.108915,0.00120893)
        };
        \addplot[color=orange,mark=square] coordinates {
        (0.438745,0.000863814)
        (0.218916,0.000102061)
        (0.108915,1.35277e-05)
        };
        \addplot[thick,color=black,no markers] coordinates {
        (0.218916, 1.35277e-05)
        (0.438745, 1.35277e-05)
        (0.438745, 5.43367e-05)
        (0.218916, 1.35277e-05)
        };
        \node [anchor=west,font=\footnotesize] at (0.438745,5.43367e-05) {$2$};
        \addplot[thick,color=black,no markers] coordinates {
        (0.218916, 1.35277e-05)
        (0.438745, 1.35277e-05)
        (0.438745, 0.0001089)
        (0.218916, 1.35277e-05)
        };
        \node [anchor=west,font=\footnotesize] at (0.438745,0.0001089) {$3$};
      \end{loglogaxis}
    \end{tikzpicture}
  \end{tabular}
  
  \begin{tabular}{c}
    \begin{tikzpicture} 
      \begin{axis}[%
          hide axis,
          xmin=10,
          xmax=50,
          ymin=0,
          ymax=0.4,
          legend style={draw=white!15!black,legend cell align=left},
          legend columns=4
        ]
        
        \addlegendimage{color=red,mark=x}
        \addlegendentry{$k^a = 1$}
        
        \addlegendimage{color=green,mark=+}
        \addlegendentry{$k^a = 2$}
        
        \addlegendimage{color=blue,mark=o}
        \addlegendentry{$k^b = 1$}

        \addlegendimage{color=orange,mark=square}
        \addlegendentry{$k^b = 2$}
      \end{axis}
    \end{tikzpicture}
  \end{tabular}

  \caption{\emph{Test 2.1}: convergence plots for the Nitsche's method with $\gamma = 1000$, $k^* = k$, $\delta$ computed along the gradient of the distance to the domain boundary, on a family of cubical meshes (a) and a family of meshes with refined boundary elements (b).}
  \label{fig:ball-voxels}
\end{figure}

\begin{figure}
  \begin{tabular}{rl}
    
    \begin{tikzpicture}[trim axis left]
      \begin{loglogaxis}
	[ mark size=4pt, grid=major, small,
	  xlabel={Average mesh size $\overline h$},
	  ylabel={$e_1^u$} ]
        \addplot[color=red,mark=x] coordinates {
        (0.433013,0.0775678)
        (0.216506,0.0386963)
        (0.108253,0.0197905)
        };
        \addplot[color=green,mark=+] coordinates {
        (0.433013,0.0864002)
        (0.216506,0.072088)
        (0.108253,0.05876)
        };
        \addplot[color=blue,mark=o] coordinates {
        (0.445932,0.0847906)
        (0.218966,0.0410026)
        (0.109054,0.0199117)
        };
        \addplot[color=orange,mark=square] coordinates {
        (0.445932,0.00936089)
        (0.218966,0.00213967)
        (0.109054,0.000508412)
        };
        \addplot[thick,color=black,no markers] coordinates {
        (0.216506, 0.000508412)
        (0.433013, 0.000508412)
        (0.433013, 0.00101683)
        (0.216506, 0.000508412)
        };
        \node [anchor=west,font=\footnotesize] at (0.433013,0.00101683) {$1$};
        \addplot[thick,color=black,no markers] coordinates {
        (0.216506, 0.000508412)
        (0.433013, 0.000508412)
        (0.433013, 0.00203366)
        (0.216506, 0.000508412)
        };
        \node [anchor=west,font=\footnotesize] at (0.433013,0.00203366) {$2$};
      \end{loglogaxis}
    \end{tikzpicture}
    
    &
    
    \begin{tikzpicture}[trim axis right]
      \begin{loglogaxis}
	[ mark size=4pt, grid=major, small,
	  xlabel={Average mesh size $\overline h$},
	  ylabel={$e_0^u$} ]
        \addplot[color=red,mark=x] coordinates {
        (0.433013,0.00302434)
        (0.216506,0.000819949)
        (0.108253,0.000293527)
        (0.0541266,8.92746e-05)
        };
        \addplot[color=green,mark=+] coordinates {
        (0.433013,0.00404427)
        (0.216506,0.00239967)
        (0.108253,0.0015856)
        (0.0541266,0.00073825)
        };
        \addplot[color=blue,mark=o] coordinates {
        (0.445932,0.00401665)
        (0.218966,0.00133159)
        (0.109054,0.000368237)
        };
        \addplot[color=orange,mark=square] coordinates {
        (0.445932,0.000427538)
        (0.218966,4.5132e-05)
        (0.109054,4.98571e-06)
        };
        \addplot[thick,color=black,no markers] coordinates {
        (0.218966, 4.98571e-06)
        (0.445932, 4.98571e-06)
        (0.445932, 2.06781e-05)
        (0.218966, 4.98571e-06)
        };
        \node [anchor=west,font=\footnotesize] at (0.445932,2.06781e-05) {$2$};
        \addplot[thick,color=black,no markers] coordinates {
        (0.218966, 4.98571e-06)
        (0.445932, 4.98571e-06)
        (0.445932, 4.21117e-05)
        (0.218966, 4.98571e-06)
        };
        \node [anchor=west,font=\footnotesize] at (0.445932,4.21117e-05) {$3$};
      \end{loglogaxis}
    \end{tikzpicture}
  \end{tabular}
  
  \begin{tabular}{c}
    \begin{tikzpicture} 
      \begin{axis}[%
          hide axis,
          xmin=10,
          xmax=50,
          ymin=0,
          ymax=0.4,
          legend style={draw=white!15!black,legend cell align=left},
          legend columns=4
        ]
        
        \addlegendimage{color=red,mark=x}
        \addlegendentry{$k^a = 1$}
        
        \addlegendimage{color=green,mark=+}
        \addlegendentry{$k^a = 2$}
        
        \addlegendimage{color=blue,mark=o}
        \addlegendentry{$k^b = 1$}

        \addlegendimage{color=orange,mark=square}
        \addlegendentry{$k^b = 2$}
      \end{axis}
    \end{tikzpicture}
  \end{tabular}

  \caption{\emph{Test 2.2}: convergence plots for the Nitsche's method with $\gamma = 1000$, $k^* = k$, $\delta$ computed along the gradient of the distance to the domain boundary, on mesh families $a$ and $b$.}
  \label{fig:torus-voxels}
\end{figure}
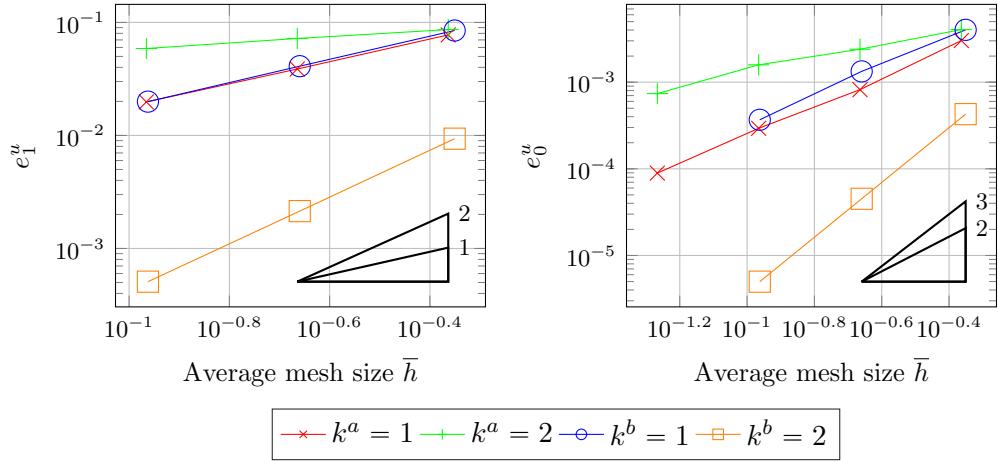

\begin{remark}
	In the framework of the meshes considered in Test 3. (and more generally, in the framework of polytopal meshes), independently of the diameter of the elements, it is always possible, by allowing very small edges and faces, to make the distance $\delta$ between the approximated boundary $\partial\Oh$ and the continuous boundary $\partial\Omega$ as small as wanted. We could then make it as small as needed for the virtual element method to yield an optimal convergence without any correction. It is known (see \cite{strang1973change}) that imposing Dirichlet boundary conditions on $\partial \Oh$ rather than on $\partial \Omega$ yields an error of order $\delta^{3/2}$. To obtain an optimal convergence we would then need to chose the mesh so that $\delta = h^{2k/3}$. In particular, for the meshes considered in Test 3, this would imply that the diameter of the faces is $h_\face \sim h_K^{3k/2}$. For $k \geq 3$, this choice would imply an excessive increase in the computational cost of the method, and  a degradation of the constants appearing, in particular, in condition \eqref{defSK}, when using the most common stabilization strategies resulting from a degradation in the constant $N_0$ appearing in Assumption \ref{shape_regular}. For the formulation we are proposing, thanks to the inclusion of the correction term, we obtain optimal convergence rate for meshes with $\delta \simeq h_f \simeq \tau h_K$
	\end{remark}

	\begin{remark}\label{rem:deltasmooth}
		For all three test cases, the smoothness of the domain plays a key role
		in the implementation of the method, which requires choosing the direction $\sigma$ on each face of the tessellation (in our case we use the gradient of the distance $\delta$ to the boundary, which justifies our assumption that $\Omega$ is of class $C^1$), and in actually evaluating the correction term, which requires numerically evaluating $\delta$ within a quadrature rule. Use of high order quadrature rules requires high smoothness of the function $\delta$ (and therefore of the domain). If the domain is only piecewise smooth, this will have to be taken into account in designing the quadrature rule, and it might require giving up the assumption that the direction $\sigma$ is constant on each face. 
	\end{remark}

\section{Conclusion}
	We presented and analyzed two (equivalent) methods, namely the Barbosa-Hughes stabilized Lagrange multiplier method and Nitsche's method, for weakly imposing non homogeneous Dirichlet boundary conditions in the framework of the two- and three-dimensional virtual element method. For both methods we proved stability and optimal error estimates, under the customary conditions on the stabilization parameters. We also considered the combination of both methods with a modified version of the projection method by Bramble, Dupont and Thom\'ee,  for the virtual element solution of problems on smooth domains with curved boundaries, approximated by polygonal/polyhedral domains. We remark that, differently from the method proposed by Bramble and coauthors, in our version of the method, the direction used for computing the Taylor expansion involved in the correction is not necessarily the one orthogonal to the boundary of the approximating domain. The resulting method, for which  we can prove stability and optimal error estimates, is more flexible, allowing more freedom in the choice of the approximating domain, which, in three dimensions, is a highly desirable feature.
	
	The results of the numerical tests confirm the theoretical bounds, and suggest that the virtual element method with weakly imposed boundary conditions and boundary correction might provide a valid alternative to the finite element method, particularly in those situations where very large cubic (and, more in general, hexahedral and tetrahedral) meshes are used on problems set on smooth domains (as it happens, for instance, in the {\em microFEM} method). Working on polyhedral meshes with larger interior elements and with polyhedral boundary elements obtained by agglomeration, and  using the method presented here might allow to obtain more precise results with a much lower number of degrees of freedom.

\bibliographystyle{amsplain}

\bibliography{biblio}

\end{document}